\let\cref = \Cref
\newtheorem{theorem}{Theorem}[section]
\newtheorem{lemma}[theorem]{Lemma}
\newtheorem{proposition}[theorem]{Proposition}
\newtheorem{example}[theorem]{Example}
\newtheorem{remark}[theorem]{Remark}
\newtheorem{definition}[theorem]{Definition}
 \let\gb=\beta  \let\gd=\delta
   \let\gL=\Lambda
\newcommand{\cO}{\mathcal{O}}
\newcommand{\bs}[1]{\boldsymbol{#1}}
\DeclareMathOperator{\E}{\mathds{E}}
\DeclareMathOperator{\R}{\mathbb{R}}
\DeclareMathOperator{\pr}{\mathds{P}}
\DeclareMathOperator*{\argmax}{argmax}
\DeclareMathOperator*{\argmin}{argmin}
\DeclareMathOperator{\MAT}{mat}
\let\cp\relax
\DeclareMathOperator*{\cp}{CP}
\renewcommand{\norm}[1]{\|#1\|}
\newcommand{\mat}[1]{\MAT_{(#1)}}
\renewcommand{\v}[1]{\bs{#1}} 
\renewcommand{\t}[1]{\bs{\mathcal{#1}}} 
\newcommand{\m}[1]{\bs{#1}}
\def\a{\v a}
\def\b{\v b}
\def\u{\v u}
\def\w{\v w}
\def\x{\v x}
\def\A{\m A}
\def\B{\m B}
\def\H{\m H}
\def\I{\m I}
\def\J{\m J}
\def\K{\m K}
\def\L{\m L}
\def\M{\m M}
\def\Q{\m Q}
\def\S{\m S}
\def\V{\m V}
\def\X{\m X}
\def\Y{\m Y}
\newcommand{\diag}[1]{\textbf{diag}\left(#1\right)}
\let\wh = \widehat
\let\wt = \widetilde
\renewcommand{\leq}{\leqslant} 
\renewcommand{\geq}{\geqslant} 
\let\it = \textit
\let\bf = \textbf
\def\eset{\varnothing}
\let\sset = \subseteq
\let\union = \cup
\newcommand{\set}[1]{\left\{#1\right\}}
\let\eps = \varepsilon
\let\l = \ell
\let\wt = \widetilde
\newcommand{\ind}[1]{\mathds{1}_{#1}}
\def\qed{ \hfill $\blacksquare$\par}
\DeclareMathOperator{\tucker}{Tucker}
\DeclareMathOperator{\sampleIndex}{\textsc{SampleIndex}}
\DeclareMathOperator{\col}{col}
\DeclareMathOperator{\row}{row}
\DeclareMathOperator{\nnz}{nnz}
\definecolor{comment}{rgb}{0.0, 0.40, 0.0}
\newcommand{\CommentIL}[1]{\hfill {\color{comment}// #1\ }}
\newcommand{\CommentNL}[1]{\State{\color{comment} // #1}}
\title{Fast and Accurate Interpolative Decompositions for \\ General, Sparse, and Structured Tensors}
\author{
    Yifan Zhang\thanks{Oden Institute, University of Texas at Austin}
    \and
    Mark Fornace\thanks{Lawrence Berkeley National Laboratory}
    \and
    Michael Lindsey\thanks{University of California, Berkeley and Lawrence Berkeley National Laboratory}}
\begin{document}

\maketitle

\begin{abstract}
    \noindent
    In this work, we develop deterministic and random sketching-based algorithms for two types of tensor interpolative decompositions (ID): the core interpolative decomposition (CoreID, also known as the structure-preserving HOSVD) and the satellite interpolative decomposition (SatID, also known as the HOID or CURT). We adopt a new adaptive approach that leads to ID error bounds independent of the size of the tensor. In addition to the adaptive approach, we use tools from random sketching to enable an efficient and provably accurate calculation of these decompositions. We also design algorithms specialized to tensors that are sparse or given as a sum of rank-one tensors, i.e., in the CP format. Besides theoretical analyses, numerical experiments on both synthetic and real-world data demonstrate the power of the proposed algorithms. 
\end{abstract}

\bf{Keywords:} tensor decomposition, interpolative decomposition, randomized algorithms, column subset selection

\section{Introduction}

Low-rank matrix approximation is a powerful tool for a wide range of applications, e.g., data compression, noise reduction, and machine learning (see \cite{kishore2017literature,markovsky2012low} for a survey).
As opposed to the singular value decomposition (SVD), which directly optimizes the reconstruction quality given a target rank, methods based on interpolative decomposition (ID) use entries from the original matrix as its factors.
Compared to SVD, ID has many advantages.
First, using entries from the original matrix can make the decomposition more interpretable, leading to applications in experimental design, data inference, and model compression \cite{compton2012hybrid,pan2012fast,oseledets2010tt,tyrtyshnikov2000incomplete,mitrovic2013cur,thurau2012deterministic,cstefuanescu2013pod}.
Second, the decomposition is structure-preserving. 
When the input matrix $\M$ is sparse or has non-negative entries, so are the corresponding factors in the decomposition.
Finally, an ID is typically computed using a direct algorithm that is more efficient than an SVD, while not compromising too much in terms of approximation error (e.g., \cite{martinsson2011randomized}).

However, the benefits of interpolative decomposition are by no means limited to matrix data.
As higher-dimensional tensorial datasets become ubiquitous in data analysis, machine learning, numerical simulation, and many engineering applications, extensions of ID to tensors are increasingly attractive, resulting in a number of investigations over the past two decades \cite{drineas2007randomized,minster2020randomized,cai2021mode,saibaba2016hoid,song2019relative,kielstra2024linear}.
Among many variants, two particular types of ID have received particular attention, which we henceforth denote as the core interpolative decomposition (CoreID, also known as the structure-preserving HOSVD \cite{minster2020randomized}) and the satellite interpolative decomposition (SatID, also known as the higher-order ID (HOID) \cite{saibaba2016hoid} or CURT \cite{song2019relative}).
The overall goal of this paper is to develop scalable and provably accurate algorithms for computing the CoreID and SatID of general, sparse, and CP-formatted~\cite{kolda2009tensor} tensor data.   
 
We next give a more precise description of the CoreID and SatID. 
(For readers new to tensor decomposition and tensor networks, the surveys \cite{kolda2009tensor,orus2014practical} are useful references.
See also the notation gallery in \cref{sec:notation}.)
Given a tensor $\t T\in\R^{n_1\times\cdots\times n_d}$, the CoreID problem of target rank $\v k = (k_1,\ldots,k_d)$ seeks the approximation (\cref{fig:cidandsid}, left)
\begin{equation}
    \label{eq:cid_def}
    \text{(CoreID)}~~~
    \min_{J_1,\ldots,J_d, \m X_1,\ldots,\X_d}\|\t T - \tucker(\t T_{J_1,\ldots,J_d}, \X_1,\ldots,\X_d)\|,
\end{equation}
where $J_i \sset [n_i]$ are index subsets of size $k_i$, indicating the selected entries for each index of $\t T$, and $\X_i \in\R^{n_i\times k_i}$ are unconstrained matrices. 
The function $\tucker(\cdot)$ computes the contracted tensor given its Tucker factors (see \cref{sec:notation}). 
The name CoreID comes from the fact the core tensor in the Tucker decomposition is a subtensor of the original tensor.
 
On the other hand, the SatID problem of target rank $\v k$ finds index subsets $J_i \sset \bigoplus_{j\neq i}[n_j]$ 
 of size $k_i$, corresponding submatrices $\m T_i = (\mat{i,\cdot}\t T\,)_{:, J_i} \in\R^{n_i\times k_i}$, 
 and an unconstrained core $\t C\in\R^{k_1\times\cdots\times k_d}$ that solves (\cref{fig:cidandsid}, right)
\begin{equation}
    \label{eq:sid_def}
    \text{(SatID)}~~~
    \min_{J_1,\ldots,J_d, \t C}\|\t T - \tucker(\t C, \m T_1,\ldots,\m T_d)\|.
\end{equation}
Here $\mat{i, \cdot} \t T$ denotes the matrix flattening of $\t T$ with the $i$-th axis being the row index (see \cref{sec:notation}). 
Unlike CoreID, in SatID the satellite nodes of the Tucker decomposition are comprised of entries from the original tensor, while the core is unrestricted.

\begin{figure}[!h]
    \centering
    \begin{minipage}{.45\textwidth}
        \centering
        \includegraphics[width=0.4\linewidth, valign=c]{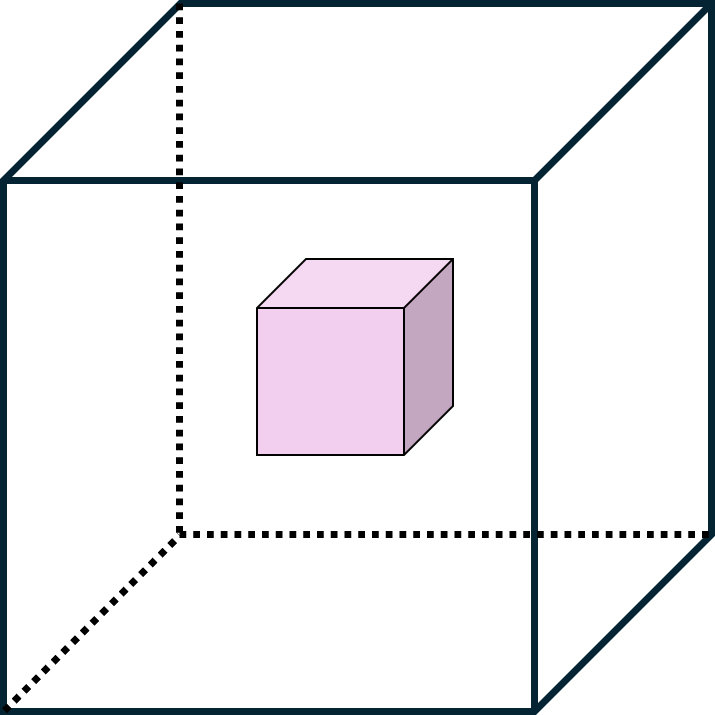}
        $~\approx~$
        \includegraphics[width=0.43\linewidth, valign=c]{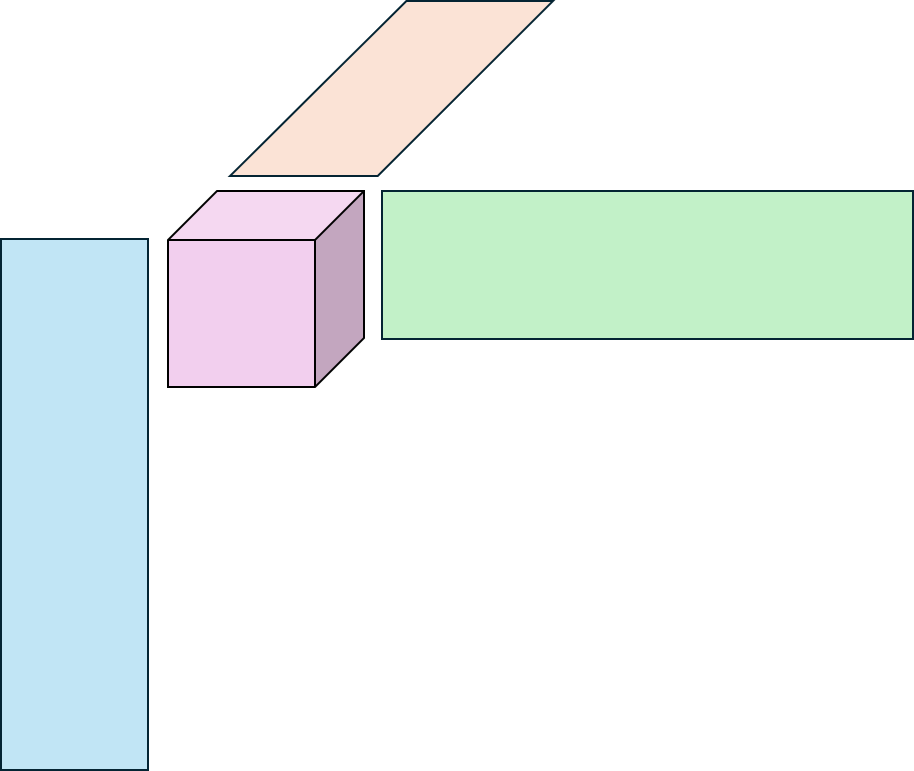}
    \end{minipage}     \hspace{0.03\textwidth}
    \begin{minipage}{.45\textwidth}
        \centering
        \includegraphics[width=0.4\linewidth, valign=c]{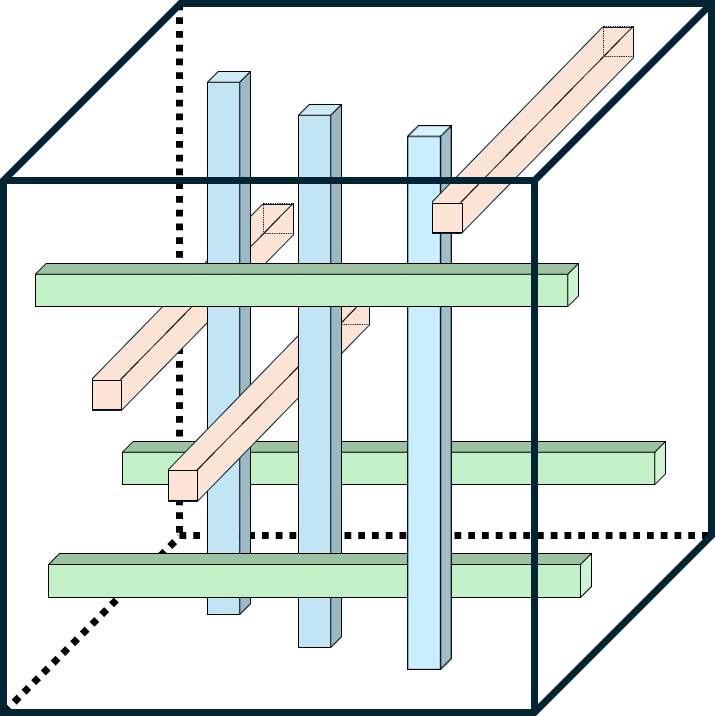}
        $~\approx~$
        \includegraphics[width=0.43\linewidth, valign=c]{illustrations/rhs.png}
    \end{minipage}

    \caption{
        Illustration of a 3rd-order CoreID (left) and a SatID (right).
        Both approximate the tensor on the left-hand side by a low rank Tucker format.
        In CoreID, the pink core (which need not be contiguous) is selected from the tensor; 3 satellite matrices or `nodes' are unrestricted and optimized to best approximate the tensor.
        In SatID, the blue, green, and orange satellite nodes are selected from the tensor (constructed by stacking the vectors selected from the tensor with corresponding colors); the pink core is unrestricted and optimized to best approximate the tensor.
    }
    \label{fig:cidandsid}
\end{figure}

While it is important to develop CoreID and SatID algorithms for a general tensor $\t T$, 
tensors in many applications are often sparse or structured.
For example, a moment tensor is often approximated as an empirical average of rank-one tensors. 
From an algorithm design perspective, specialized algorithms are needed to yield CoreID and SatID from these special tensor formats with optimal efficiency.
Some operations such as the Gram-Schmidt procedure can destroy sparsity or rank structure, and they should be avoided in the algorithm design.
For sparse tensors, extraction of a subtensor is economic, but for rank-structured tensors this can be expensive.
On the other hand, contraction between rank-structured tensors can be more efficient than contractions involving sparse tensor.
Therefore, one needs different CoreID and SatID algorithms for different types of tensors.

\subsection{Previous Works} \label{sec:previous}
The CoreID and SatID problems have been targeted by a number of previous works.
See e.g., \cite{minster2020randomized,saibaba2016hoid,drineas2007randomized,song2019relative}. 
The most common strategy is to flatten the tensor into different matrix views, and run matrix column selection algorithms to find index sets $J_1,\ldots,J_d$.
For simplicity, we refer to these matrix column selection subproblems as matrix ID problems.
The matrix column selection methods used in these works generally fall into the following categories. For a more detailed review please refer to the introduction of the recent work~\cite{fornace2024column}.

\textbf{Strong rank-revealing QR.}
First, \cite{minster2020randomized,saibaba2016hoid} developed algorithms for the CoreID and SatID problems (respectively) using strong rank-revealing QR (sRRQR) for the backbone matrix column selection.
The sRRQR algorithm was developed in \cite{gu1996efficient}, and it was introduced as a method for matrix interpolative decomposition in \cite{martinsson2011randomized}.
As computing the QR factorization of the matricized tensors are often too costly, this type of algorithms first apply a random sketch to the matricized tensor and then QR the sketched matrix.
One major issue of this strategy is that its error bound depends on the size of the tensor in a way that is especially problematic when applied to matricizations of higher-order tensors.
On an $n^d$ tensor, the ID error from the algorithm is only to bounded by $O(n^{d/2})$ times the optimal Tucker error of the same rank \cite{minster2020randomized}.

\textbf{Norm maximization, norm sampling, and uniform sampling.} Besides sRRQR, another type of matrix column selection approach is to select columns of a matrix $\A$ iteratively from a distribution with probability mass for the $i$-th column proportional to $\norm{\A_{:, i}}^\beta$. 
Once a column is selected, it is deflated from all columns of $\A$, similarly to the Gram-Schmidt procedure, and the next column is selected from the deflated matrix $\A$.
When $\beta \rightarrow +\infty$, this recovers the greedy column-pivoted QR decomposition.
This norm maximization approach is also related to the orthogonal matching pursuit framework \cite{tropp2007signal}. 
Taking $\beta = 2$ yields a randomized approach known as norm sampling or randomly pivoted QR \cite{drineas2007randomized,drineas2008relative,chen2023randomly,deshpande2006adaptive,deshpande2006matrix}, and taking $\beta = 0$ yields naive uniform sampling.
Using norm sampling for SatID of a general tensor was studied in \cite{drineas2007randomized}.
Compared to the sRRQR algorithm, the error bound from norm sampling is dimension-independent (i.e., independent of $n$).
However, the approach does not automatically adapt to exploit sparsity or rank structure in the input tensor, and a redesigned approach is needed for optimal scaling.

\textbf{Leverage score sampling.}
Another type of algorithm samples columns independently using leverage scores (see e.g., \cite{mahoney2011randomized}).
In \cite{song2019relative} the authors developed this approach for SatID on general tensors of order 3 (terming it the CURT decomposition). 
The authors found a novel way connecting theories of leverage scores sampling and SatID errors, and this allowed them to give the first SatID algorithm with an error of $1+\eps$ times the optimal CP (not Tucker!) error for any $\eps > 0$.
Specifically, according to the complete version of \cite[Theorem C.20]{song2019relative}, on a 3rd-order tensor the algorithm requires a SatID of rank $\cO(k^5/\eps^9)$ to guarantee an error of $(1 + \eps)^5$ relative to the optimal CP approximation error of rank $k$, assuming such an optimal CP approximation exists. 
This method can be extended to higher-order tensors at the cost of increasingly severe complexity in terms of $k$ and $\eps$.
For a tensor of order 4, \cite{song2019relative} suggests that a rank of $\cO(k^{17}/\eps^{25})$ is required to obtain an error of $(1 + \eps)^7$ relative to the optimal rank-$k$ CP. 

\textbf{Determinantal point process sampling.}
Determinantal point process (DPP) sampling (see, e.g., \cite{derezinski2021determinantal} for an overview) is an alternative approach which samples a subset $I$ of columns of matrix $\A$ all at once.
Writing $\K = \A^\top \A$,
the sampling probability is proportional to the determinant of the submatrix $\K_{I, I}$.
While the theoretical guarantees for DPP sampling are known to be nearly optimal (e.g., \cite{Guruswami2012-rr,derezinski2020improved,belabbas2009spectral}), algorithms for DPP sampling usually remain inefficient compared to alternative heuristic approaches such as greedy QR with column pivoting.

\textbf{Nuclear maximization.}
Two of the authors recently developed theory and efficient algorithms for column selection via nuclear maximization \cite{fornace2024column}, which can be viewed as greedily picking a column based on minimization of the posterior error.
Specifically, within the column selection iterations, for each remaining column $i$ the algorithm looks at the Frobenius norm of the matrix after column $i$ is deflated.
Then it selects column $i$ that minimizes the Frobenius norm.
In this sense, the basic algorithm is one-step optimal and matches isolated usage in prior literature (e.g., \cite{Altschuler2016-gd,ordozgoiti2018iterative,Farahat2011-zm}; see other references in \cite{fornace2024column}). 
In \cite{fornace2024column}, it is shown that the error of compares favorably to DPP sampling while also permitting scalable implementation. In particular, the matrix-free perspective of~\cite{fornace2024column} will be fundamental to some of our developments below.

\subsection{Our Contributions}

We develop scalable algorithms for both the CoreID and SatID problems for general, sparse, and CP-structured tensor data. 
Similar to previous attempts, we select index sets using matrix column selection on different views of the matricized tensor.
We use pivoted QR-based approaches (norm maximization, norm sampling, nuclear maximization) as our backbone matrix ID algorithm.
The reason for choosing QR-based approaches is that they lead to efficient algorithm design and dimension-independent error bounds relative to the Tucker error. Moreover, these methods can be adaptive and a prescription of the target rank is not necessary.
To alleviate the cost of QR deflation, efficient and specialized randomized sketching-based algorithms are designed based on the structure of the target tensor.
More specifically, we highlight our main contributions below:
\begin{itemize}
    \item \bf{Scalable algorithms for general, sparse, and CP-structured tensor data.} 
    Using sketching techniques from randomized numerical linear algebra, we give scalable algorithms for CoreID and SatID.
    Specialized algorithms are given for CoreID and SatID for sparse and CP tensors. 
    For selecting each index set $J_i$, $i = 1,\ldots,d$,
    our algorithms have a complexity linear in the number of nonzeros for sparse tensors, and linear in the number of rank-1 terms for CP tensors. In particular, for an efficient structure-exploiting implementation of SatID using any pivoted QR approach, we must avoid computing the full vector of column scores (whether norm-based or nuclear~\cite{fornace2024column}), which is generally of size $\cO(n^{d-1})$ where $n=\max_j \{n_j\}$.
    To this end, we develop a marginalized sampling method for SatID that preserves optimal complexity in both the sparse and rank-structured settings and moreover ensures that the algorithm requires essentially no extra storage.
    \item \bf{Improved dimension-independent error bounds.}
    Previously, to the best of our knowledge, tensor CoreID methods have exclusively used sRRQR \cite{minster2020randomized}, where the reconstruction error is only guaranteed to be $\cO\left(\prod_{j = 1}^d n_j^{1/2}\right)$
    times worse than the optimal Tucker approximation error of the same rank.
    We propose a new adaptive approach on CoreID that leads to dimension-independent error bounds.
    Specifically, the CoreID reconstruction error bound only depends on the errors of $d$ matrix ID problems on different matricized views of the tensor  (\cref{thm:err_cid_seq}).
    When random sketching is used, we also extend these new error bounds to the sketched version of the algorithm. 
    For SatID, \cref{prop:sid_err} states that the reconstruction error can be bounded by the sum of the matrix ID errors, which is again dimension-independent.
\end{itemize}

\subsection{Notation and Outline} \label{sec:notation}
We use bold, upper case calligraphic letters $\t T,\t A$ for tensors (usually of dimension at least 3), bold upper case letters $\m T, \m A$ for matrices, and bold lowercase letters $\v a, \v u$ for vectors.
For a tensor $\t T$ of order $d$ and subsets $I \sset [d]$, $\mat{I,\, [d]\setminus I}\t T$ gives the $\prod_{i\in I}n_i \times \prod_{j \in [d]\setminus I}n_j$ matrix flattening of $\t T$, where the rows are indexed by axes (modes) in $I$ and columns are indexed by axes in $[d]\setminus I$.
For convenience, $\mat{I,\, [d]\setminus I}$ is also written as $\mat{I, \cdot}$ or $\mat{\cdot, \,[d]\setminus I}$.
If $I = \set{i}$ or $[d]\setminus I = \set{i}$ is a singleton, we use the shortcut $\mat{i, \cdot} \t T$ or $\mat{\cdot, i}\t T$ instead. 
The operator $\times_j$ denotes the mode $j$ contraction (see \cite{kolda2009tensor} for details).
\begin{equation*}
    (\t T\times_j \x)_{i_1,\ldots,i_{j-1},i_{j+1},\ldots,i_d} 
    =
    \sum_{i_j} \t T_{i_1,\ldots,i_j,\ldots,i_d} \x_{i_j}.
\end{equation*}

A good review of tensor networks can be found in \cite{orus2014practical}.
We use $\cp(\A_1,\ldots,\A_d)$ to denote the contracted tensor for a CP network.
That is,
\begin{equation*}
    \cp(\A_1,\ldots,\A_d)_{i_1,\ldots,i_d} = \sum_k (\A_1)_{i_1, k} (\A_2)_{i_2, k}\cdots(\A_d)_{i_d, k}.
\end{equation*}
This can be viewed as a high order generalization of matrix rank decomposition.
Sometimes there is a mixing weight $\w$ in CP network, and we denote
\begin{equation*}
    \cp(\w, \A_1,\ldots,\A_d)_{i_1,\ldots,i_d} = \sum_k w_k \cdot (\A_1)_{i_1, k} (\A_2)_{i_2, k}\cdots(\A_d)_{i_d, k}.
\end{equation*}
These two definitions are compatible with each other if one views $\w$ as a matrix with one row.
The Tucker contraction of core $\t C$ and satellite nodes $\A_1,\ldots,\A_d$ is denoted by
\begin{equation*}
    \tucker(\t C, \A_1,\ldots,\A_d)_{i_1,\ldots,i_d}
    =
    \sum_{j_1,\ldots,j_d}\t C_{j_1,\ldots,j_d}(\A_1)_{i_1,j_1}\cdots(\A_d)_{i_d, j_d}.
\end{equation*}
This can be viewed as a higher-order SVD (indeed, the matrices $\A_i$ can be made into isometries). 

For a matrix $\A$, $\col(\A)$ and $\row(\A)$ denote the column and row spaces of $\A$ respectively.
For the purpose of this paper, unless otherwise specified, the norm $\|\cdot\|$ refers to the $\ell_2$ or Frobenius norm (for vector, matrix, and tensor inputs).
For a positive integer $r$, $\A^{(r)}$ denotes the best rank $r$ approximation to $\A$ in the Frobenius metric.
For a tensor $\t T$ and a multilinear rank $\v r$, $\t T^{(\v r)}$ denotes the best rank $\v r$ Tucker approximation to $\t T$.

The notation $\nnz(\cdot)$ denotes the number of nonzero entries in the input array. 
For two numbers $C_1$ and $C_2$, notation $C_1\asymp (1\pm\eps)C_2$ indicates that $(1-\eps)C_2 \leq C_1\leq(1 + \eps)C_2$, which is often convenient in sketching theory. 

For terminology, we refer to the matrix column selection problem as the matrix ID problem.
We use the name QR-based method to refer to three matrix column selection algorithms: norm maximization (greedy column-pivoted QR), norm sampling (randomly pivoted QR), and nuclear maximization.
These method select columns iteratively and deflate the selected column from the matrix, and hence the name.
See \cref{sec:previous} for more details.
We say that a CoreID or SatID error bound is dimension-independent to mean that it only depends on the matrix ID errors and in particular has no explicit dependence on the tensor shape $(n_1,\ldots,n_d)$, except via the number $d$.

The rest of the paper is organized as follows.
In \cref{sec:cid} and \cref{sec:sid} respectively, we 
give details to the algorithms and theoretical analyses for CoreID and SatID.
In \cref{sec:exp}, we illustrate the performance of our algorithms on synthetic and real-world datasets.
Many proofs and additional details are deferred to appendices.

\section{Core Interpolative Decomposition} \label{sec:cid}

In this section, we give full details of CoreID algorithms.
In \cref{sec:cid_from_matrix}, we first review existing methods that compute CoreID from sequential matrix column selection (which we simply refer to as matrix ID) on flattened tensors.
As the accuracy guarantee of the existing algorithm depends unfavorably on the size of the tensor, we then propose a new adaptive sequential approach to assemble the matrix ID results, leading to an improved dimension-independent error bound.
A few examples are provided to show how the bound can be used when using different matrix ID algorithms.
The corresponding baseline algorithm using this adaptive sequential approach for CoreID is \cref{alg:basic_cid}.
To make this algorithm more scalable, in \cref{sec:rand_cid}, we propose a sketching-based randomized algorithm (\cref{alg:sketch_cid}) on top of the adaptive sequential approach to accelerate the baseline algorithm.
Relevant theoretical guarantees are also established.
Finally in \cref{sec:cid_cp} and \cref{sec:cid_sparse} respectively, we specialize the sketching-based algorithm to CP tensors and sparse tensors.
The resulting algorithms are respectively \cref{alg:cid_cp} and \cref{alg:cid_sparse}.

Contrary to SatID, which is a natural generalization of CUR decomposition (e.g., \cite{mahoney2009cur})
of matrices to tensors, CoreID is a problem unique to tensors of order $d \geq 3$.
For matrices, the CoreID is computing a decomposition $\A = \m X \m C \m Y$ where $\m C$ is a $k_1\times k_2$ submatrix of $\A$. 
In this case, since we are allowed to choose $\m X$ and $\m Y$ freely, the optimal solution will match the SVD of $\A$ of rank $\min\set{k_1, k_2}$.
For $d \geq 3$, however, the optimal CoreID is not equivalent to the optimal Tucker decomposition.
The reason is that given a fixed CoreID core $\t C$ and the optimal Tucker decomposition $\t T \approx \tucker(\t C^*,\Y_1^*,\ldots,\Y_d^*)$, in general it is not possible to find satellites $\X_1,\ldots,\X_d$ such that $\tucker(\t C,\X_1,\ldots,\X_d) = \tucker(\t C^*,\Y_1^*,\ldots,\Y_d^*)$.

\subsection{The Adaptive Sequential Approach for CoreID} \label{sec:cid_from_matrix}

To motivate our algorithm, consider the approach computing a CoreID on tensor $\t T$ by flattening it into matrices $\A_i = \mat{\cdot, i}\t T$, $i = 1,\ldots,d$, selecting columns from each $\A_i$ to find index sets $J_i$, and then finding satellite nodes $\X_i$ by solving
\begin{equation*}
    \X_i = \argmin_{\X}\|\A_i - (\A_i)_{:, J_i} \X\|.
\end{equation*} 
We then simply deliver $\X_i$ and $J_i$ as our solution to CoreID.
We call this approach the \it{independent approach}, as calculating the pairs ($\X_i, J_i$) are independent over $i = 1,\ldots,d$.
Despite some potential benefits of the independent approach in computational efficiency and interpretability, it can lead to arbitrarily bad reconstruction error even if all matrix ID errors are small.
We give an explicit example to illustrate this in \cref{app:bad_indep}.

To avoid this issue, one can use a \it{sequential approach} to compute the CoreID, in which the algorithm uses the selected $(J_i,\X_i)$, $i = 1,\ldots,s-1$ to find the next tuple $(J_s,\X_s)$. 
In \cite{minster2020randomized}, the authors proposed the following sequential approach, which we refer to as the \it{simple sequential approach}.

\begin{enumerate}[label = (\roman*), itemsep = -1ex, partopsep = 1ex,parsep = 1ex]
    \item (\bf{Simple sequential approach}) Continue the ID process on the remaining core tensor.
    That is, find $J_2$ by running selection on $\mat{\cdot, 2} \t T_{J_1,\ldots}$ and set 
    \begin{equation*}
        \X_2=\argmin_{\X} \left\|\t T_{J_1,\ldots} - \t T_{J_1, J_2,\ldots}\times_2 \X\right\|.
    \end{equation*}
\end{enumerate}

\noindent
The main issue of (i) is that the final reconstruction error bound will depend on the operator norms $\|\X_1\|_2,~\|\X_2\|_2$, etc, because any reconstruction error made to $\t T_{J_1,\ldots}$ in later matrix ID solves will get magnified by $\X_1$.
For this reason, the algorithm in \cite{minster2020randomized} uses sRRQR as the matrix ID algorithm, as many other methods do not come with any (non-trivial) bound on $\|\X_i\|_2$.
Consequently, due to the shape-dependent bound on $\|\X_i\|_2$ from the theory of sRRQR, the reconstruction error bound given in \cite{minster2020randomized} is $\Omega(\sqrt{N})$ times larger relative to the optimal Tucker error, where $N$ is the number of elements in the tensor.

Here we propose using another sequential approach in CoreID, which we term \it{adaptive sequential approach}. 
\begin{enumerate}[label = (\roman*), itemsep = -1ex, partopsep = 1ex,parsep = 1ex, start = 2]
    \item (\bf{Adaptive sequential approach}) Continue the ID process on the reconstructed tensor. That is, find $J_2$ by running selection on $\mat{\cdot, 2}(\t T_{J_1,\ldots}\times_1 \X_1)$ and set 
    \begin{equation*}
        \X_2=\argmin\nolimits_{\X}\left\|\t T_{J_1,\ldots}\times_1 \X_1 - \t T_{J_1,J_2,\ldots}\times_1 \X_1 \times_2\X \right\|.
    \end{equation*}
\end{enumerate}

\noindent
We will see below that with the adaptive sequential approach (ii), we can get a dimension-independent error bound on the reconstruction error (see \cref{sec:notation}) for many popular matrix ID algorithms, i.e., the error only depends on the matrix ID errors and is independent of the tensor size.

To make the notation simple, we introduce the default notation for \cref{sec:cid} that $\t T_i$ denotes the reconstruction tensor
\begin{equation*}
    \t T_i = \t T_{J_1,\ldots,J_{i-1},\ldots}\times_1\X_1\times_2\cdots\times_{i-1}\X_{i-1}
\end{equation*}
from which we select the $i$th index set $J_i$, and $\A_i = \mat{\cdot, i}\t T_i$ denotes the corresponding matrix.

Before the analysis, first we outline the baseline algorithm using the adaptive sequential approach below in \cref{alg:basic_cid} and analyze its complexity.
The only difference compared to approach (ii) above is that in \cref{alg:basic_cid} we merge the $R$ factor of the QR decomposition of $\X_i$ into the remaining core to make the computation more efficient.

\begin{algorithm}
    \textbf{Input:} tensor $\t T$ of order $d$, target rank $(k_1,\ldots,k_d)$, matrix ID algorithm $\textsc{MatrixID}$.\\
    \textbf{output:} Index sets $J_1,\ldots,J_d$, satellite nodes $\X_1,\ldots,\X_d$.

    \vspace*{1ex}
    \begin{algorithmic}[1]
        \caption{Baseline CoreID for a general tensor}
            \label{alg:basic_cid}
            \Function{BasicCoreID}{}
            \For{$i = 1,\ldots,d$}
                \State $\m A_i \gets \mat{\cdot, 1}\t T$ if $i=1$ else $\mat{\cdot, i}(\t T\times_1\L_1\times_2\cdots\times_{i-1} \L_{i-1})$
                \State Solve matrix problem $J_i, \X_i \gets \textsc{MatrixID}(\m A_i, k_i)$
                \State Compute a row-wise QR decomposition for short-wide matrix, $\X_i = \L_i \Q_i$
            \EndFor
        \EndFunction
        \\
        \Return Index sets $J_1,\ldots,J_d$, satellite nodes $\X_1,\ldots,\X_d$.
    \end{algorithmic}
\end{algorithm}

To analyze the complexity of the basic algorithm, we assume for simplicity that $k_1 = \ldots = k_d = k$, and the shape of $\t T$ is $n\times\ldots\times n$.
The dominant contribution to the cost is computing the matrix ID, which depends on the algorithm used.
For norm maximization and norm sampling, the complexity is generically $\cO(k n^d)$.
The cost is the same for nuclear maximization with randomized score estimation assuming modest sketching size. 
Alternatively, non-randomized nuclear maximization yields $\cO(k n^2 + \theta)$ cost, where $\theta$ is the cost of computing $\A_i^T \A_i$, which is $\cO(n^{d+1})$ in a general dense setting. 

In the rest of this section, we show that the adaptive sequential approach indeed leads to dimension-independent error bounds.
Recall from \cref{sec:notation} that $\A^{(r)}$ and $\t T^{(\v r)}$ are respectively the best rank $r$ matrix approximation to $\A$ and the best rank $\v r$ Tucker approximation to tensor $\t T$.
We begin by introducing the following notion.
\begin{definition}\label{def:phiacc}
    A matrix ID algorithm is said to be $\Phi$-accurate if it has the following property.
    For any matrix $\A$ and any reference rank $r$,
    if $k\geq r$ columns of $\A$ are selected by the algorithm, resulting in an index set $J$ and satellite node $\X$,
    then for all such $k$ it holds that
    \begin{equation}\label{eq:phi_bnd}
        \|\A - \A_{:, J}\X\| \leq \Phi(k, r, \|\A - \A^{(r)}\|).
    \end{equation}
    If the matrix ID algorithm is randomized, we say the algorithm is $\Phi$-accurate in expectation if 
    \begin{equation}\label{eq:phi_bnd_exp}
        \E\|\A - \A_{:, J}\X\| \leq \Phi(k, r, \|\A - \A^{(r)}\|).
    \end{equation}
    We say the algorithm is relatively $\Phi$-accurate
    if instead of \eqref{eq:phi_bnd} we have
    \begin{equation}
        \frac{\|\A - \A_{:, J}\X\|}{\|\A\|} \leq \Phi\left(k, r, \frac{\|\A - \A^{(r)}\|}{\|\A\|}\right).
    \end{equation}
    We naturally require that $\Phi$ is non-decreasing in the first and last argument.
\end{definition}
Now we state the error bound for the adaptive sequential approach.

\begin{restatable}{theorem}{ciderrorbnd}
    \label{thm:err_cid_seq}
    Consider running \cref{alg:basic_cid} with target rank $\v k$, $k_i\geq r_i$.
    Denote $\t T^{[\v k]}$ the reconstructed tensor.
    Suppose that the matrix ID algorithm is $\Phi$-accurate, and denote
    for brevity $\Phi_i(\cdot) = \Phi(k_i, r_i, \cdot)$.
    Suppose without loss of generality that the processing order of the modes is $1\rightarrow 2\rightarrow\cdots\rightarrow d$.
    Then
    \begin{equation}\label{eq:cid_err_phi}
        \|\t T - \t T^{[\v k]}\| \leq \Phi_d \circ (I + \Phi_{d-1}) \circ\cdots\circ (I + \Phi_1)(\|\t T - \t T^{(\v r)}\|).
    \end{equation}
    Here $I$ is the identity map.
    In particular, if $\Phi_i(x) = C_i x$ for all $i$, then
    \begin{equation}\label{eq:cid_err_c}
        \|\t T - \t T^{[\v k]}\| \leq C_d \cdot \left(\prod_{i = 1}^{d-1} (1 + C_i)\right)\|\t T - \t T^{(\v r)}\|.
    \end{equation}
    If the matrix ID algorithm is relatively $\Phi$-accurate,
    then bounds \eqref{eq:cid_err_phi} and \eqref{eq:cid_err_c} hold with absolute errors replaced with relative errors.
    If the matrix ID algorithm is $\Phi$-accurate in expectation and $\Phi$ is concave in the last argument,
    then bounds \eqref{eq:cid_err_phi} and \eqref{eq:cid_err_c} hold with the left-hand sides replaced with the expected error.
\end{restatable}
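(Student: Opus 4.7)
The plan is to run an induction on the mode index $j$, bounding at each step the error between the partially reconstructed tensor and a carefully chosen reference. Let $\t T_1 := \t T$ and let $\t T_{j+1}$ denote the tensor obtained by processing mode $j$ of $\t T_j$ via the matrix ID, so that the final output is $\t T^{[\v k]} = \t T_{d+1}$. Writing $P_l$ for the mode-$l$ linear operator corresponding to $(J_l,\X_l)$ (which, when $\X_l$ is the optimal least-squares reconstruction $((\A_l)_{:,J_l})^+\A_l$, is an oblique projection satisfying $\A_l\m M_l^\top = P_{\col((\A_l)_{:,J_l})}\A_l$), the algorithm produces $\t T_{j+1} = P_j(\t T_j)$ with step error $\|\t T_j - \t T_{j+1}\| \le \Phi_j(\eps_j)$, $\eps_j = \|\A_j - \A_j^{(r_j)}\|$.

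The key preparatory observation introduces the reference tensors $\t S_j := P_{j-1}\circ\cdots\circ P_1\,\t T^{(\v r)}$. Since each $P_l$ acts only on mode $l$, it cannot increase the rank of mode-$j$ flattenings for $j\neq l$; hence $\t S_j$ inherits mode-$j$ rank $\le r_j$ from $\t T^{(\v r)}$. Using $\mat{\cdot,j}\t S_j$ as a rank-$r_j$ comparator for $\A_j$ immediately yields
\begin{equation*}
\eps_j \;\le\; \|\A_j - \mat{\cdot,j}\t S_j\| \;=\; \|\t T_j - \t S_j\| \;=:\; d_j,
\end{equation*}
with $d_1 = e := \|\t T - \t T^{(\v r)}\|$. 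The main technical step is then the one-step recursion $d_{j+1} \le (I + \Phi_j)(d_j)$. Decomposing $\t T_{j+1} - \t S_{j+1} = (\t T_{j+1} - \t T_j) + (\t T_j - \t S_j) + (\t S_j - \t S_{j+1})$ bounds the first term by $\Phi_j(d_j)$ via $\Phi$-accuracy and the second by $d_j$. The third term $\|\t S_j - \t S_{j+1}\|$ is the ``misapplication'' of $P_j$ (designed for $\t T_j$) to the reference $\t S_j$; here the crucial fact is that $\mat{\cdot,j}\t S_j$ has rank $\le r_j\le k_j$ and that the optimal $\X_j$ gives an \emph{orthogonal} projection onto $\col((\A_j)_{:,J_j})$. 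A Pythagoras/orthogonality argument in the mode-$j$ flattening absorbs this term into $d_j$ without a $\|P_j\|_{\mathrm{op}}$ penalty, which is precisely what the simple sequential approach of \cite{minster2020randomized} cannot do.

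Unrolling the recursion gives $d_j \le (I + \Phi_{j-1})\circ\cdots\circ(I + \Phi_1)(e)$ for all $j\le d$. The final step produces the asymmetric outer $\Phi_d$ (rather than $I + \Phi_d$) by reading the total error $\|\t T - \t T_{d+1}\|$ through the mode-$d$ flattening: the accumulated errors from steps $1,\ldots,d-1$ are already captured in $d_d$, which bounds $\eps_d$; applying $\Phi_j$-accuracy one last time at mode $d$ yields $\|\t T - \t T^{[\v k]}\| \le \Phi_d(d_d) \le \Phi_d\bigl((I+\Phi_{d-1})\circ\cdots\circ(I+\Phi_1)(e)\bigr)$, which is the claimed bound \eqref{eq:cid_err_phi}. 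Substituting $\Phi_i(x)=C_i x$ gives \eqref{eq:cid_err_c} by direct computation.

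For the variants, the relative-error case follows by dividing through by $\|\t T\|$ and propagating relative quantities through every telescoping step; the in-expectation case follows from Jensen's inequality applied to the concavity of $\Phi$ in its last argument, which lets expectations be pulled inside each $\Phi_j$ in the composition. The main obstacle in the entire argument is the orthogonality-based control of the misapplied projection term $\|\t S_j - \t S_{j+1}\|$ in the inductive step, since this is precisely the piece that would otherwise incur the $\sqrt{N}$-type factors from $\|P_j\|_{\mathrm{op}}$ and destroy the dimension independence that the theorem promises.
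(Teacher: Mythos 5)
Your overall scaffolding (induction over the modes, using a rank-$r_j$ reference tensor to convert $\Phi$-accuracy of the matrix ID into a bound on $\eps_j$, Jensen for the expectation case) matches the paper's, but your choice of reference tensor creates a genuine gap in the inductive step. You set $\t S_{j+1} := P_j \t S_j$, i.e., you push the reference forward through the mode-$j$ operator, and you then need $\|\t S_j - P_j\t S_j\|$ to disappear into $d_j$. It does not. Writing $\m B = \mat{\cdot,j}\t S_j$ and $\A_j = \mat{\cdot,j}\t T_j$, the algorithm's mode-$j$ operator acts as $\m B \mapsto \m B_{:,J_j}\,(\A_j)_{:,J_j}^{+}\A_j$: the interpolation coefficients are fitted to $\A_j$, not to $\m B$, so $P_j\t S_j$ is \emph{not} the orthogonal projection of the columns of $\m B$ onto $\col((\A_j)_{:,J_j})$, and no Pythagorean identity is available. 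The two honest ways to bound this term either cost an additional $+\,d_j$ (degrading your recursion to $d_{j+1}\le 2d_j+\Phi_j(d_j)$, which yields $\prod_i(2+C_i)$ rather than the claimed $\prod_i(1+C_i)$) or reintroduce $\|(\A_j)_{:,J_j}^{+}\A_j\|_2$ — exactly the operator-norm penalty of the simple sequential approach that the theorem is designed to eliminate. The low rank of $\m B$ does not rescue the argument.

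The fix, which is what the paper actually does, is to \emph{not} apply $P_j$ to the reference at all. Take as comparator for $\t T_{j+1}$ the tensor $\t T_j^{(\v r)}$ (the best Tucker-rank-$\v r$ approximation of the current iterate; the fixed reference $\t T^{(\v r)}$ would also do, since mode-wise linear maps do not increase any mode rank). Since this comparator already has mode-$(j+1)$ rank at most $r_{j+1}$, optimality of $\t T_{j+1}^{(\v r)}$ gives $\eps_{j+1}\le\|\t T_{j+1}-\t T_j^{(\v r)}\|\le\|\t T_{j+1}-\t T_j\|+\eps_j\le\Phi_j(\eps_j)+\eps_j$ — a clean two-term triangle inequality with no third term and no operator norms. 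Separately, your relative-error case is under-justified: "dividing through by $\|\t T\|$" does not work because relative $\Phi$-accuracy at step $j$ is measured against $\|\t T_j\|$, which changes with $j$; the paper handles this with an angle argument exploiting the cone structure of bounded-Tucker-rank tensors and the orthogonality $\t T_j - \t T_{j+1}\perp\t T_{j+1}$ of the least-squares reconstruction.
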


\begin{proof}
    See \cref{app:cid_err_bnd}.
\end{proof}

\begin{remark}
    The bound in \cref{thm:err_cid_seq} displays an exponential dependence on $d$.
    This is not as good as the HOSVD error, which is proved to be $\sqrt{d}$-suboptimal \cite{vannieuwenhoven2012new}.
    The missing property preventing such a result here is the lack of orthogonality in the satellite nodes.
    Nonetheless, for fixed $d$ the bound is independent of the shape $(n_1, \ldots, n_d)$  when the matrix dependence of $\Phi$ is dimension-independent.
    This is a substantial improvement over the simple sequential approach (i), cf. \cite{minster2020randomized}.
    Improved reconstruction accuracy of the adaptive sequential approach is also demonstrated through numerical examples in \cref{sec:exp}.
\end{remark}

We provide three worked examples to illustrate how \cref{thm:err_cid_seq} can be used together with error bounds for matrix ID algorithms.
In \cref{exp:err_cid_dpp}, we calculate the expected CoreID error when determinantal point process sampling is used to perform matrix ID.
In \cref{exp:err_cid_nuc}, we calculate the CoreID error in relative terms when nuclear maximization is used to perform matrix ID, for which a relative error guarantee is available.
Finally in \cref{exp:err_cid_rpchol}, we consider using the norm sampling algorithm as the matrix ID solver and show how \cref{thm:err_cid_seq} can be used when the matrix ID error bound is stated as an $(r,\eps)$ bound (e.g. \cite{derezinski2021determinantal,chen2023randomly}).
In all examples, the resulting bounds are dimension-independent.

\begin{example} \label{exp:err_cid_dpp}
    (CoreID error with DPP)
    Following the notations in \cref{thm:err_cid_seq}, let $\t T^{[\v k]}$ denote the output of \cref{alg:basic_cid} using determinantal point process sampling. 
    From \cite{derezinski2021determinantal}, we know $k$-DPP is $\Phi$-accurate in expectation with 
    \begin{equation*}
        \Phi(k, r, v) = \sqrt{1 + \frac{r}{k - r + 1}} \  v,
    \end{equation*}
    which is trivially concave in $v$.
    Thus, if in particular we take $k_i = m r_i$ then \cref{thm:err_cid_seq} gives
    \begin{equation*}
        \E \| \t T - \t T^{[\v k]} \| \leq \sqrt{1 + \frac{1}{m - 1}} \  \left(1 + \sqrt{1 + \frac{1}{m - 1}}\right)^{d-1} \  \| \t T - \t T^{(\v r)} \|,
    \end{equation*}
    which is again dimension-independent.
    As $m \rightarrow \infty$, the above bound goes to $2^{d-1} \|\t T - \t T^{(\v r)}\|$.
\end{example}

\begin{example} \label{exp:err_cid_nuc}
    (CoreID error with nuclear maximization)
    Using the same notations as in the example above, define
    \begin{equation*}
        \Phi(k, r, v) = \sqrt{ \min_{s \in \set{r+1,r+2,\ldots,k}} e^{-k/s} + (1-e^{-k/s})\left(1 + \frac{r}{s-r+1}\right)v^2},
    \end{equation*}
    and $\Phi_i(v) = \Phi(k_i, r_i, v)$.
    Since nuclear maximization is relatively $\Phi$-accurate~\cite{fornace2024column}, if we take in particular $k_i = m_i r_i$ with $m_i \geq 2$ and instead specify $s = 2r_i$ in each $\Phi_i$, then \cref{thm:err_cid_seq} gives (relaxing the bound by removing the square root for simplicity)
    \begin{equation*}
        \Phi_i(v) \leq e^{-m/2} + 2v,
    \end{equation*}
    and hence recursion computation gives
    \begin{equation*}
        (1 + \Phi_{d-1})\circ\cdots\circ(1 + \Phi_1)(v) \leq
        3^{d-1}v + \frac{1}{2}(3^{d-1} - 1)e^{-m/2}.
    \end{equation*}
    Thus,
    \begin{equation*}
        \frac{\|\t T - \t T^{[\v k]}\|}{\|\t T\|} 
        \leq
        2\cdot 3^{d-1}\cdot \frac{\|\t T - \t T^{(\v r)}\|}{\|\t T\|} + 3^{d-1}e^{-m / 2},
    \end{equation*}
    which is dimension-independent.
    The optimal choice of $s$ that minimizes $\Phi_i$ depends on the relative Tucker error, but we do not consider optimizing over $s$ here for simplicity.
\end{example}

\begin{example} \label{exp:err_cid_rpchol}
    (CoreID error with norm sampling)
    Using the same notations as in the example above.
    If instead $\t T^{[\v k]}$ is the output of \cref{alg:basic_cid} using norm sampling,
    then from \cite{chen2023randomly}, for norm sampling of rank $k$ applied to any matrix $\A$
    \begin{equation*}
        \E\|\A - \A_{:, J}\X\|^2 \leq (1 + \eps) \cdot \|\A - \A^{(\v r)}\|^2
    \end{equation*}
    provided that
    \begin{equation*}
        k \geq \frac{r}{\eps} + r\log\left(\frac{1}{\eps\eta}\right),
    \end{equation*}
    where $\eta = \|\A - \A^{(r)}\|^2 / \|\A\|^2$.
    Since this bounds $\E\|\A - \A_{:, J}\X\|$ linearly in $\|\A - \A^{(r)}\|$, the concavity assumption on $\Phi$ is satisfied.
    If we choose $k_i$ such that for each $i = 1,\ldots,d$
    \begin{equation*}
        k_i \geq \frac{r_i}{\eps} + r_i\log\left(\frac{1}{\eps\eta_i}\right),
    \end{equation*}
    where $\eta_i = \|\A_i - \A_i^{(r_i)}\|^2 / \|\A_i\|^2$,  
    then \cref{thm:err_cid_seq} gives 
    \begin{equation*}
        \E\|\t T - \t T^{[\v k]}\| \leq (1 + \eps)^{1/2} (2 + \eps)^{(d-1)/2} \cdot \|\t T - \t T^{(\v r)}\|.
    \end{equation*}
    This is again dimension-independent.
\end{example}

The above examples demonstrate how to translate existing matrix ID error guarantees into tensor ID error guarantees using \cref{thm:err_cid_seq}.
In particular, by using the proposed adaptive sequential approach, if the matrix ID error is dimension-independent, then the tensor CoreID error is dimension-independent as well.

\subsection{Accelerating \cref{alg:basic_cid} with Random Sketching}\label{sec:rand_cid}

As pointed out in \cref{sec:cid_from_matrix}, running \cref{alg:basic_cid} requires $\Omega(k n^d)$ complexity per mode, which can be prohibitive for large tensor sizes, and the deflation process within the matrix ID can destroy structure in the tensor (such as being low-rank or sparse).
To make \cref{alg:basic_cid} more scalable, we compress matrices $\A_i\in\R^{n^{d-1}\times n}$ 
to much smaller matrices $\B_i \in \R^{m\times n}$ using a random sketching matrix $\S_i\in\R^{m\times n^{d-1}}$.
This not only makes the subsequent column selection much faster but also enables exploitation of the tensor structure via the design of the random sketch $\S_i$.
At the end of this section, we provide analyses for how sketching affects the CoreID error bound when nuclear maximization or norm sampling is used as the matrix ID algorithm.

Our proposed approach works for all QR-based matrix ID algorithms, such as norm maximization, norm sampling, and nuclear maximization.
Without loss of generality, let us consider the selection of index set $J_1$.
For QR-based matrix ID, we select columns into $J_1$ one by one.
Let $\A = \mat{\cdot, 1}\t T$ be the flattened matrix.
Suppose we have selected column indices $I$ into $J_1$, and we are selecting the next column.
Let $\Q_I$ be an orthonormal basis of the space spanned by the selected columns $\A_{:, I}$.

For norm sampling and norm maximization, we need to compute the norms
\begin{equation}\label{eq:qr_score}
    d^{(I)}_i := \|\A_{:, i} - \Q_I\Q_I^\top\A_{:, i}\|^2 = \min_{\x}\|\A_{:, I} \x - \A_{:, i}\|^2,~~i \in [n].
\end{equation}
Then norm sampling selects the next column by sampling from a probability density proportional to $\v d^{(I)} = (d_i^{(I)})$, while norm maximization selects $\argmax_i d_i^{(I)}$.

For nuclear maximization, the algorithm looks at the reduction of posterior reconstruction errors
\begin{equation}\label{eq:nuc_score}
    d_i^{(I)} := \|\A - \Q_{I}\Q_{I}^\top \A\|^2 - \|\A - \Q_{I\union\set{i}}\Q_{I\union\set{i}}^\top \A\|^2
    =
    \min_{\X}\|\A_{:, (I\union\set{i})^c}\X - \Q_{I^c}\Q_{I^c}^\top\A\|^2,~~i \in [n],
\end{equation}
and augments $I$ with $\argmax_i d_i^{(I)}$.

We see that in all algorithms, the scores $\v d^{(I)} = (d_i^{(I)})$ are computed by
solving least squares problems involving columns of $\A$.
This motivates the use of a sketch matrix to compute the scores.
To derive the randomized algorithm, we introduce the following notion from random sketching theory (see, e.g., \cite{woodruff2014sketching}). 

\begin{definition}
    [$\eps$-SE]
    \label{def:se}
    Given $\eps > 0$ and a subspace $V$, a matrix $\S$ is said to be an $\eps$-subspace embedding ($\eps$-SE) of $V$ if for all $\x \in V$, we have $\|\S\x\|^2 \asymp (1 \pm \eps) \|\x\|^2$ (see \cref{sec:notation} for the definition of $\asymp$). 
    We say $\S$ is an $\eps$-SE of a matrix $\A$ if $\S$ is an $\eps$-SE of the subspace $\col(\A)$.
\end{definition}

There are several standard techniques for constructing an $\eps$-SE.
Let $\A \in\R^{N\times n}$ be the matrix from which we intend to select columns.
A simple choice of sketch matrix $\S \in\R^{m\times N}$ is a random Gaussian matrix. 
Then with high probability $\S$ will be an $\eps$-SE if $m \sim \eps^{-2} \rank(\A)$ (see, e.g., \cite{woodruff2014sketching}). 
In addition, when $\A$ is close to a matrix of rank $r \ll n$, it is often sufficient to take $m \sim \eps^{-2} r$ instead of $\eps^{-2} \rank(\A)$ as explained in \cite{cohen2015optimal}. 
Alternatively, one can sketch with the fast JL transform (FJLT) operator \cite{ailon2006approximate} or the count sketch \cite{pagh2013compressed} operator.
Later, we will also use tensor-structured sketches $\S$ in order to more efficiently compute IDs for sparse and CP tensors.
For different designs of $\S$, the bounds on the sketch dimension $m$ may vary.
Nonetheless, $m$ can often be taken independent of $N$.
When $\A$ is constructed as the matricization of a tensor, usually $N\gg n$, and thus the compressed matrix $\B = \S\A$ is much smaller than $\A$.
We refer the readers to \cite{woodruff2014sketching,martinsson2020randomized} for further discussions.

The theorems \cref{thm:pert_rpchol} and \cref{thm:pert_nuc} stated later in this section show that we can run QR-based matrix ID algorithms (nuclear maximization, norm maximization, norm sampling, see \cref{sec:notation}) on the smaller matrix $\B$ instead of $\A$.
As suggested by these theorems, we summarize the whole procedure of sketched QR-based matrix ID algorithm in \cref{alg:sketch_cid} below.
This algorithm can be used to replace Line 4 in \cref{alg:basic_cid}.
In subsequent sections, we will tailor the design of $\S$ to sparse and CP tensors.

\begin{algorithm}
    \textbf{Input:} matrix $\A$, target rank $k$, matrix ID solver $\textsc{MatrixID}$ using QR-based methods. \\
    \textbf{output:} index set $J$, satellite node $\X$.
    
    \vspace*{1ex}
    \begin{algorithmic}[1]
        \caption{Sketched QR-based matrix ID}
        \label{alg:sketch_cid}
            \Function{SketchedMatrixID}{}
            \State Generate an $\eps$-SE sketch $\S$ of $\A$
            \State Compute $\B = \S\A$
            \State $J, \X \gets \textsc{MatrixID}(\B, k)$
            \EndFunction
        \\
        \Return index set $J$ and node $\X$.
    \end{algorithmic}
\end{algorithm}

Note that in the entire selection process of $J_1$, we only need to generate a single $\S$, and we pre-compute $\B = \S\A$ before running any selection. 
Then we run the selection on a much smaller matrix $\B$ to obtain $J_1$. 
This is crucial.
For structured and sparse tensors, it is not an option to 
generate matrices $\S$ anew each time the set $I$ gets augmented and only require $\S$ to preserve the norms of columns $\A_{:, i} - \Q_I\Q_I^\top\A_{:, i}$, $i = 1,\ldots,n$.
This is because the deflation operation $\A_{:, i} - \Q_I\Q_I^\top\A_{:, i}$ will destroy sparsity and tensor structure, making the algorithm prohibitively slow. 
In addition, the pre-application of $\S$ avoids generating and applying the sketch operator to the large tensor each time a column is selected.

To analyze the complexity of \cref{alg:sketch_cid}, suppose that the sketch dimension is $m$ and the target rank is $k$. Then after the sketch step, the complexity of the remaining steps is the same as the complexity of the $\textsc{MatrixID}$ algorithm applied to a matrix of size $m \times n$.
In theory, the sketch dimension $m$ depends on the design of $\S$ as well as other parameters such as $n$, $k$, and the (stable) rank of $\A$ \cite{cohen2015optimal}.
In practice, $m$ is often treated as a hyperparameter, not necessarily following the theoretical bounds.

How will the sketch affect the reconstruction error? 
To ease the discussion, we introduce the following notations.
Fix the matrix ID rank $k$.
Let $(J, \X^*)$ be the matrix ID output on the sketched matrix $\S\A$.
Given a reference rank $r \leq k$, we seek $C \geq 0$ such that
\begin{equation}
\label{eq:generalbound}
    \|\A_{:, J}\X^* - \A\|^2 \leq C \|\A - \A^{(r)}\|^2.
\end{equation}
We present two strategies to deduce such a bound, which will be applied respectively in the cases where norm sampling and nuclear maximization are applied for the $\textsc{MatrixID}$ step.

The first strategy for establishing \eqref{eq:generalbound} uses the following chain of inequalities:
\begin{equation}\label{eq:rpchol_approach}
    \|\A_{:, J}\X^* - \A\|^2 \xrightarrow{\leq} \min_{\X}\|\A_{:, J}\X - \A\|^2 \xrightarrow{\leq} \|\A - \A^{(r)}\|^2,
\end{equation}
where in each step (indicated $\xrightarrow{\leq}$) an inequality must be shown to hold up to a suitable constant factor.
In \eqref{eq:rpchol_approach}, the first step will follow provided that our sketch defines a SE of $\A$, as we shall explain at greater length in the proof of \cref{thm:pert_rpchol} below. 
Specifically, the bound for this step is achieved in \cref{lem:step1}, which shows the preconstant to be $(1-\delta)^{-1}$, under the assumption that $\S$ is a $\delta$-SE for $\A$. 
This result may be of some independent interest, as a more straightforward argument yields only the preconstant $(1+\delta)/(1-\delta)$.
The second step is harder, and it requires an analysis of how a perturbation of the scores defined $d_i^{(I)}$ affects the matrix ID error. In the case of norm sampling, where the scores are defined by \eqref{eq:qr_score}, we are able to control the effect of such a perturbation, as we shall show in the proof of Theorem \ref{thm:pert_rpchol} below.

The second strategy for establishing \eqref{eq:generalbound} is more general and applies in particular to the case of nuclear selection. (The reason for including the previous strategy is that the analysis of the perturbation of the norm sampling scores will be used later in \cref{thm:pert_cpsid}.)  The structure of the second strategy is given by the following chain:
\begin{equation}\label{eq:nuc_approach}
    \|\A_{:, J}\X^* - \A\|^2 \xrightarrow{\leq} \min_{\X}\|\S\A_{:, J}\X - \S\A\|^2 \xrightarrow{\leq} \|\S\A - (\S\A)^{(r)}\|^2 \xrightarrow{\leq} \|\A - \A^{(r)}\|^2.
\end{equation} 
The first and the last step are based on the SE assumption for the sketch (via arguments going beyond direct application of the definition of SE, as outlined in the proof of \cref{thm:pert_nuc} below), and the middle step uses the available matrix ID error bounds without sketching.

Based on these proof strategies, we prove the following error bounds for sketched norm sampling (using the first strategy \eqref{eq:rpchol_approach}) and sketched nuclear maximization (using the second strategy \eqref{eq:nuc_approach}).

\begin{restatable}{theorem}{rpqrError}\label{thm:pert_rpchol}
    Fix any $\eps, \gd \in (0, 1)$ and a target rank $k$.
    Suppose that in \cref{alg:sketch_cid} the norm sampling method is used for $\textsc{MatrixID}$, and $\m S$ is a $\delta$-SE of $\A$.
    Let $(J, \X^*)$ be the output of \cref{alg:sketch_cid}.
    Then for a reference rank $r$,
    \begin{equation}\label{eq:pert_rpchol1}
        \E\min_{\X}\|\A - \A_{:, J}\X\|^2 \leq (1 + \eps) \|\A - \A^{(r)}\|^2
    \end{equation}
    provided that 
    \begin{equation*}
        k \geq \frac{1+\gd}{1-\gd}\cdot \left(\frac{r}{\eps} + r\log\left(\frac{1}{\eps\eta}\right)\right),
    \end{equation*}
    where $\eta = \|\A - \A^{(r)}\|^2 / \|\A\|^2$. 
    Consequently, the matrix ID error satisfies 
    \begin{equation}\label{eq:pert_rpchol2}
        \E\|\A - \A_{:, J}\X^*\|^2 \leq (1 + \eps)(1 - \delta)^{-1} \|\A - \A^{(r)}\|^2.
    \end{equation}
\end{restatable}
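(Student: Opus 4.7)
My plan is to follow the two-step chain in \eqref{eq:rpchol_approach}. Write $\X^*$ for the sketched least-squares satellite produced by \cref{alg:sketch_cid}, i.e., $\X^*=\argmin_{\X}\|\S\A_{:,J}\X-\S\A\|^2$. The first inequality in \eqref{eq:rpchol_approach} is exactly the content of \cref{lem:step1}, whose $(1-\delta)^{-1}$ preconstant accounts for the outer factor in \eqref{eq:pert_rpchol2}. The real work therefore lies in establishing \eqref{eq:pert_rpchol1}, i.e., bounding the unsketched optimal residual $\min_{\X}\|\A-\A_{:,J}\X\|^2$ in expectation when the index set $J$ is produced by norm sampling on the sketched matrix $\S\A$.

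My strategy is to view sketched norm sampling as standard norm sampling on $\A$ with multiplicatively perturbed probabilities. Fix an intermediate selected set $I$, and define the unsketched and sketched scores
\begin{equation*}
    d_i^{(I)}(\A)=\min_{\x}\|\A_{:,I}\x-\A_{:,i}\|^2,\qquad d_i^{(I)}(\S\A)=\min_{\x}\|\S\A_{:,I}\x-\S\A_{:,i}\|^2.
\end{equation*}
Every residual $\A_{:,I}\x-\A_{:,i}$ lies in $\col(\A)$, so plugging either minimizer into the opposing problem and invoking the $\delta$-SE hypothesis twice gives the two-sided estimate
\begin{equation*}
    (1-\delta)\,d_i^{(I)}(\A)\leq d_i^{(I)}(\S\A)\leq (1+\delta)\,d_i^{(I)}(\A).
\end{equation*}
Normalizing by the column sum then shows that at every iteration, the conditional sampling distribution used by \cref{alg:sketch_cid} lies within a multiplicative factor in $[(1-\delta)/(1+\delta),\,(1+\delta)/(1-\delta)]$ of the true norm-sampling distribution on $\A$.

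Given this comparison, I plan to re-run the norm-sampling analysis behind \cref{exp:err_cid_rpchol} (cf.\ \cite{chen2023randomly}). Its core is a supermartingale-type recursion whose per-step decrement in a rank-$r$ residual potential is linear in the conditional sampling probabilities. A uniform $(1+\delta)/(1-\delta)$ deflation of those probabilities inflates the iteration count needed to drive the potential below $(1+\eps)\|\A-\A^{(r)}\|^2$ by the same factor, producing the stated threshold $k\geq\frac{1+\delta}{1-\delta}\bigl(\tfrac{r}{\eps}+r\log\tfrac{1}{\eps\eta}\bigr)$ and therefore \eqref{eq:pert_rpchol1}. Importantly, the quantity $\eta=\|\A-\A^{(r)}\|^2/\|\A\|^2$ entering the logarithm is defined on $\A$ rather than $\S\A$, so it is not modified by the sketch. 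Chaining with \cref{lem:step1} then yields \eqref{eq:pert_rpchol2}.

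The main obstacle is the step I deferred to a ``re-run'': confirming that the multiplicative perturbation of the sampling probabilities propagates through the recursion of \cite{chen2023randomly} as a single prefactor on $k$, without compounding with either the $1/\eps$ term or the $\log(1/(\eps\eta))$ term. Concretely, one must open up the proof there and verify that each invocation of a lower bound on the per-step sampling probability can be uniformly weakened by $(1-\delta)/(1+\delta)$, while the potential, its recursion structure, and the role of $\A^{(r)}$ are all untouched. The remainder is routine bookkeeping.
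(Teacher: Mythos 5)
Your proposal is correct and follows essentially the same route as the paper: Lemma \ref{lem:step1} handles the first inequality of \eqref{eq:rpchol_approach}, the two-sided $\delta$-SE comparison of the deflated column scores yields $\wt{\pr}_I(i)\geq\frac{1-\delta}{1+\delta}\pr_I(i)$, and the step you defer to a ``re-run'' of \cite{chen2023randomly} is precisely what the paper isolates as \cref{lem:approx_rpchol}, where the perturbation enters the recursion $\E[\m E^{[j+1]}\,|\,\m E^{[j]}]\preccurlyeq \m E^{[j]}-\beta(\m E^{[j]})^2/\tr(\m E^{[j]})$ only through the factor $\beta$ and hence surfaces as the single prefactor $\beta^{-1}$ on $k$, exactly as you anticipate.
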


\begin{proof}
    See \cref{app:cid_rpqr}. 
\end{proof}

A key component of the proof of \cref{thm:pert_rpchol} is the following lemma on the error bound of norm sampling when the scores are perturbed. This may be of independent interest, and in particular we use it again in the proof of \cref{thm:pert_cpsid}.

\begin{restatable}{lemma}{approxRpqr}\label{lem:approx_rpchol}
    Suppose norm sampling is used to compute the matrix ID of $\A$.
    Fix any $\eps\in (0, 1)$ and a target rank $k$.
    Using exact norm sampling,
    denote by $\pr_I(i)$ the probability that column $i$ is sampled given that the subset of columns $I$ is already selected.
    Suppose we instead sample columns from an approximate distribution $\wt{\pr}_I(i)$, such that for some $\beta \in (0, 1)$, $\wt{\pr}_I(i) \geq \beta\pr_I(i)$ for all $I, i$.
    Let $J$ be the set of $k$ column indices selected by following $\wt{\pr}$.
    Then
    \begin{equation} \label{eq:pert_rpchol11}
        \E\min_{\X}\|\A - \A_{:, J}\X\|^2 \leq (1 + \eps) \|\A - \A^{(r)}\|^2
    \end{equation}
    provided that 
    \begin{equation*}
        k \geq \beta^{-1}\cdot \left(\frac{r}{\eps} + r\log\left(\frac{1}{\eps\eta}\right)\right),
    \end{equation*}
    where $\eta = \|\A - \A^{(r)}\|^2 / \|\A\|^2$.
\end{restatable}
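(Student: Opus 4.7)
The plan is to adapt the potential-function analysis for exact norm sampling (as developed in~\cite{chen2023randomly}) and propagate a $\beta^{-1}$ slowdown factor through each step. Let $\phi_t := \|\A - \Q_{I_t}\Q_{I_t}^\top\A\|^2$ denote the squared residual after $t$ columns have been selected, and let $\phi_\star := \|\A - \A^{(r)}\|^2$. Because appending a column never increases the residual, for each candidate column $i \notin I_t$ the single-step decrement $\Delta_i^{(t)} := \phi_t - \phi_{t+1}\big|_{\text{add }i}$ is non-negative. This non-negativity gives the key monotonicity observation:
\begin{equation*}
    \E_{\wt{\pr}}\bigl[\phi_t - \phi_{t+1} \mid I_t\bigr]
    = \sum_i \wt{\pr}_{I_t}(i)\,\Delta_i^{(t)}
    \geq \beta \sum_i \pr_{I_t}(i)\,\Delta_i^{(t)}
    = \beta\cdot \E_{\pr}\bigl[\phi_t - \phi_{t+1} \mid I_t\bigr],
\end{equation*}
so every conditional expected decrement under $\wt{\pr}$-sampling is at least $\beta$ times the corresponding decrement under exact norm sampling.

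Next I would transcribe the one-step recurrence from~\cite{chen2023randomly}, which for exact norm sampling takes the form $\E_{\pr}[\phi_{t+1} - \phi_\star \mid I_t] \leq (1 - \alpha_t)(\phi_t - \phi_\star) + \gamma_t\, \phi_\star$, where $\alpha_t,\gamma_t$ are explicit functions of $k-t$ and $r$ capturing the rate at which exact norm sampling absorbs the top-$r$ subspace. Combining this with the monotonicity observation yields the approximate-sampling analogue
\begin{equation*}
    \E_{\wt{\pr}}\bigl[\phi_{t+1} - \phi_\star \mid I_t\bigr]
    \leq (1 - \beta\alpha_t)(\phi_t - \phi_\star) + \beta\gamma_t\, \phi_\star.
\end{equation*}
Taking total expectation, unrolling the recurrence over $k$ steps, and solving for the number of iterations needed so that $\E[\phi_k] \leq (1+\eps)\phi_\star$ inflates the exact-sampling threshold by exactly the factor $\beta^{-1}$ and produces the stated requirement $k \geq \beta^{-1}\bigl(r/\eps + r\log(1/(\eps\eta))\bigr)$.

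The principal obstacle is verifying that the proof of the exact-sampling bound in~\cite{chen2023randomly} can be phrased as a conditional one-step recurrence whose decrement term is linear in the column-wise scores $\pr_{I_t}(i)$, as is the case for most analyses based on adaptive sampling. If the cited proof instead uses a more delicate mechanism (e.g., a Laplace-transform or a full coupling argument), a light rewrite of the key inequality may be needed, but the non-negativity $\Delta_i^{(t)} \geq 0$ always makes the $\beta$-factor substitution legitimate because any bound derived by weighting $\Delta_i^{(t)}$ against $\pr_{I_t}(i)$ carries over with $\pr_{I_t}(i)$ replaced by $\beta\pr_{I_t}(i) \leq \wt{\pr}_{I_t}(i)$. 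As an independent sanity check, one can decompose $\wt{\pr}_{I_t} = \beta\pr_{I_t} + (1-\beta)\mu_{I_t}$ for some distribution $\mu_{I_t}$ and realize each $\wt{\pr}$-draw as either an exact norm-sampling draw (with probability $\beta$) or an auxiliary draw; since $\phi_t$ is monotone non-increasing in $t$, the auxiliary draws can only reduce the residual further, while the exact draws concentrate around $k\beta$ in number. This second pathway confirms the $\beta^{-1}$ scaling but is less clean to carry out rigorously in expectation, so I favor the direct recurrence argument above.
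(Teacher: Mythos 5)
Your opening step is exactly the paper's: because each single-column update term is nonnegative (PSD, in the paper's formulation), replacing $\wt{\pr}_I(i)$ by the lower bound $\beta\pr_I(i)$ yields a valid one-step bound, and this is precisely where the factor $\beta$ enters. The gap is in what you do next. The analysis of \cite{chen2023randomly} cannot be phrased as a scalar recurrence $\E[\phi_{t+1}-\phi_\star\mid I_t]\le(1-\alpha_t)(\phi_t-\phi_\star)+\gamma_t\phi_\star$ with deterministic coefficients: writing $\m E^{[t]}=\K-\K^{[t]}$ for the residual of the Gram matrix $\K=\A^\top\A$, the exact expected one-step decrement is $\tr((\m E^{[t]})^2)/\tr(\m E^{[t]})$, which depends on the entire spectrum of $\m E^{[t]}$ and not only on $\phi_t=\tr(\m E^{[t]})$. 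So there is no closed recurrence in $\phi_t$ to ``transcribe,'' and the plan of unrolling one stalls at its second step. Relatedly, a pointwise lower bound on the conditional expected decrement does not by itself control the $k$-step error, because the trajectory $(I_t)$ under $\wt{\pr}$ differs from the trajectory under $\pr$; one needs a mechanism for pushing expectations through a state-dependent one-step map.

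The paper supplies that mechanism by working at the matrix level: it shows $\E[\m E^{[j+1]}\mid\m E^{[j]}]\preccurlyeq\Phi_\beta(\m E^{[j]})$ with $\Phi_\beta(\M)=\M-\beta\M^2/\tr(\M)$, proves in \cref{lem:phiD} that $\Phi_\beta$ is PSD-preserving, monotone, and concave in the Loewner order, applies Jensen to obtain $\E[\m E^{[k]}]\preccurlyeq\Phi_\beta^{(k)}(\K)$, reduces to a two-block diagonal worst case by Schur concavity, and finally bounds the resulting recursion by a continuous-time ODE whose hitting time is $\int_{a}^{\eps b}-\beta^{-1}r(x+b)/x^{2}\,dx$; that integral is where the clean $\beta^{-1}$ inflation of the iteration count actually comes from, rather than from any automatic rescaling of a threshold. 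Your fallback remark---that any bound obtained by weighting nonnegative decrements against $\pr_I(i)$ survives the substitution $\pr_I(i)\to\beta\pr_I(i)$---is the correct germ of the paper's first inequality, but the remaining steps (Loewner concavity and monotonicity of $\Phi_\beta$, Jensen, Schur concavity, the ODE comparison) are where the work lies, and the proposal leaves them unaddressed; your mixture-coupling ``sanity check'' runs into the same need for matrix-level monotonicity to justify that interleaved auxiliary draws cannot hurt in expectation. To complete the argument you would essentially have to reproduce \cref{lem:phiD} and the downstream analysis with the parameter $\beta$ carried through.
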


\begin{proof}
    See \cref{app:cid_rpqr}.
\end{proof}

The next result is for the matrix ID error bound using a sketch matrix in nuclear maximization.

\begin{restatable}{theorem}{nucError}\label{thm:pert_nuc}
    Fix any $\gd \in (0, 1)$ and a target rank $k$.
    Suppose that in \cref{alg:sketch_cid} the nuclear maximization method is used for $\textsc{MatrixID}$, and $\m S$ is a $\delta$-SE of $\A$.
    Let $(J, \X^*)$ be the output of \cref{alg:sketch_cid}.
    Then for a reference $r$, in the $\eps \rightarrow 0$ limit
    \begin{equation}
        \min_{\X}\|\A - \A_{:, J}\X\|^2 \leq (1 + \eps) \|\A - \A^{(r)}\|^2
    \end{equation}
    provided that
    \begin{equation*}
        k\geq C \cdot \frac{1+\gd}{1-\gd} \left(\frac{r}{\varepsilon }+r-1\right) \left(\log \left(\frac{1+\delta}{1-\delta} \ \nu^{-1}\right)+\log \left(\varepsilon^{-1}-r^{-1}+1\right)\right),
    \end{equation*}
    where $C$ is a universal constant and $\nu = \|\A - \A^{(r)}\|^2 / \|\A^{(r)}\|^2$.
    Consequently, the matrix ID error satisfies:
    \begin{equation}
        \|\A - \A_{:, J}\X^*\|^2 \leq (1 + \eps)\frac{1 + \delta}{1-\delta} \|\A - \A^{(r)}\|^2.
    \end{equation}
\end{restatable}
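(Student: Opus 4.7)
My plan is to walk through the three-arrow chain \eqref{eq:nuc_approach}. The two outer arrows transfer between sketched and unsketched quantities using the $\delta$-SE hypothesis on $\S$, while the middle arrow invokes the relative-accuracy bound for nuclear maximization (\cref{exp:err_cid_nuc}, from \cite{fornace2024column}) applied to the compressed matrix $\S\A$. Calibrating the free parameter $s$ of the nuclear bound then yields both conclusions.

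For the first arrow, note that $\X^*$ minimizes $\|\S\A - \S\A_{:,J}\X\|$ because $(J,\X^*)$ is the nuclear-maximization output on $\B = \S\A$. The columns of $\A - \A_{:,J}\X^*$ all lie in $\col(\A)$, so a column-wise application of the $\delta$-SE gives
\begin{equation*}
    \|\A - \A_{:,J}\X^*\|^2 \leq (1-\delta)^{-1}\|\S(\A - \A_{:,J}\X^*)\|^2 = (1-\delta)^{-1}\min_\X\|\S\A - \S\A_{:,J}\X\|^2.
\end{equation*}
Symmetrically, since $\col(\A - \A^{(r)}) \subseteq \col(\A)$ and $(\S\A)^{(r)}$ is optimal as a rank-$r$ approximant of $\S\A$,
\begin{equation*}
    \|\S\A - (\S\A)^{(r)}\|^2 \leq \|\S(\A - \A^{(r)})\|^2 \leq (1+\delta)\|\A - \A^{(r)}\|^2,
\end{equation*}
and likewise $\|\S\A\|^2 \leq (1+\delta)\|\A\|^2$. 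Plugging the relative $\Phi$-accuracy of nuclear maximization (for any $s \in \{r+1,\ldots,k\}$),
\begin{equation*}
    \min_\X\|\S\A - \S\A_{:,J}\X\|^2 \leq e^{-k/s}\|\S\A\|^2 + (1 - e^{-k/s})\Big(1 + \tfrac{r}{s - r + 1}\Big)\|\S\A - (\S\A)^{(r)}\|^2,
\end{equation*}
into the chain and writing $\rho = (1+\delta)/(1-\delta)$ yields the master inequality
\begin{equation*}
    \|\A - \A_{:,J}\X^*\|^2 \leq \rho\left[e^{-k/s}\|\A\|^2 + (1 - e^{-k/s})\Big(1 + \tfrac{r}{s - r + 1}\Big)\|\A - \A^{(r)}\|^2\right].
\end{equation*}

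To finish, I calibrate $s$ and $k$. The SVD Pythagorean identity gives $\|\A\|^2 = (1+\nu^{-1})\|\A - \A^{(r)}\|^2$, so picking $s \approx r/\eps + r - 1$ makes $1 + r/(s-r+1) \approx 1+\eps$, and requiring $k/s \gtrsim \log(\rho(1 + \nu^{-1})/\eps)$ drives the $e^{-k/s}$-term below a small multiple of $\eps$. These two conditions reassemble into the stated lower bound on $k$ (with universal constants absorbed into $C$), producing the consequence bound $\|\A - \A_{:,J}\X^*\|^2 \leq (1+\eps)\rho\|\A - \A^{(r)}\|^2$ and, via $\min_\X\|\A - \A_{:,J}\X\|^2 \leq \|\A - \A_{:,J}\X^*\|^2$, the first conclusion (the asymptotic $\eps \to 0$ wording absorbing the $\rho$ factor). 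The main obstacle is this last calibration: the $\rho$ preconstant in the master inequality forces $s$ to overshoot the naive value $r/\eps + r - 1$, which in turn produces both the outer $\rho$ multiplier and the $\rho$-dependent term inside the logarithm in the stated lower bound on $k$.
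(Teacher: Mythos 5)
Your proposal follows the same three-arrow chain \eqref{eq:nuc_approach} as the paper's proof in \cref{app:cid_nuc}: the identical first arrow (columnwise $\delta$-SE applied to $\A - \A_{:,J}\X^*$, whose columns lie in $\col(\A)$, combined with optimality of $\X^*$ for the sketched least-squares problem), the same middle arrow (the nuclear-maximization guarantee of \cite{fornace2024column} applied to $\S\A$), and a transfer back to unsketched quantities at the end. Your two deviations in the details are fine and arguably cleaner. For the last arrow you use Eckart--Young ($\|\S\A - (\S\A)^{(r)}\| \leq \|\S(\A - \A^{(r)})\|$) followed by a columnwise SE bound, whereas the paper compares the singular values of $\S\A$ and $\A$ via the min-max principle; both give $\|\S\A - (\S\A)^{(r)}\|^2 \leq (1+\delta)\|\A-\A^{(r)}\|^2$, and your route avoids needing the lower-bound direction on the sketched tail energy because you calibrate against the unsketched $\nu$ rather than the intermediate quantity $\hat\nu = \|\S\A - (\S\A)^{(r)}\|^2/\|(\S\A)^{(r)}\|^2$ that the paper controls. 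Your explicit calibration of $s$ and $k$ is the step the paper outsources to the pre-calibrated statement cited from \cite{fornace2024column}; up to universal constants it reproduces the stated threshold, and requiring slightly more of $k$ than necessary is harmless.

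The one step that does not hold up is your justification of the first display. You correctly derive $\|\A - \A_{:,J}\X^*\|^2 \leq (1+\eps)\rho\,\|\A - \A^{(r)}\|^2$ with $\rho = (1+\delta)/(1-\delta)$, but then assert that the ``asymptotic $\eps\to 0$ wording'' absorbs $\rho$. It cannot: $\rho$ is a constant strictly greater than $1$ depending only on $\delta$, not on $\eps$, so $(1+\eps)\rho \to \rho > 1$ as $\eps \to 0$, and the inequality $\min_\X\|\A - \A_{:,J}\X\|^2 \leq (1+\eps)\|\A - \A^{(r)}\|^2$ does not follow from your master inequality. To be fair, the paper's own proof only establishes the second display (with the explicit $(1+\eps)\tfrac{1+\delta}{1-\delta}$ prefactor) and never separately derives the first; removing the $\rho$ there would require an additional idea, e.g., showing that the enlarged $k$ (which already carries an extra $\tfrac{1+\delta}{1-\delta}$ factor in the hypothesis) compensates for the sketching distortion, in the way that \cref{lem:approx_rpchol} accomplishes for norm sampling. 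So your chain is sound up to and including the second conclusion, but the hand-off to the first conclusion is a genuine gap rather than a wording issue.
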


\begin{proof}
See \cref{app:cid_nuc}.
\end{proof}

In the theorems above, if we do not apply any sketch and set $\delta = 0$, we recover the corresponding guarantees in the literature \cite{chen2023randomly, fornace2024column} for the exact version of the algorithms.
With a sketch that is a $\delta$-SE, the matrix ID error bounds of sketched nuclear maximization and sketched norm sampling are $(1 + \cO(\delta))$ times worse than the error without a sketch, provided that the number of selected columns $k$ is only $\frac{1 + \delta}{1-\delta}$ times larger than the number used without sketching. 
This justifies the use of a sketch.

\subsection{CoreID for CP Tensors}\label{sec:cid_cp}

In order to sketch a tensor in the CP format, we use
the Kronecker FJLT (KFJLT) \cite{malik2020guarantees,jin2021faster} or tensor sketch \cite{pagh2013compressed,pham2013fast} as our sketch matrix $\S$ as they can be applied to CP tensors efficiently. 
Using \cref{alg:sketch_cid} we can efficiently select $J_1$ and compute $\X_1$.

The only remaining issue is how to merge the $\L$ matrices in \cref{alg:basic_cid} and selection from mode $s$ when $s > 1$. 
Suppose the given tensor is $\cp(\m H_1,\ldots,\m H_d)$ and that we have selected index sets $J_1,\ldots,J_s$ with corresponding satellite nodes $\m X_1,\ldots,\m X_s$.
Let $\X_i = \L_i\Q_i$ be the row-wise QR.
Using \cref{alg:basic_cid}, in the next step we need to select columns from
\begin{equation*}
    \A_{s+1} = \mat{\cdot, s+1}(\t T_{J_1,J_2,\ldots,J_s,\ldots} \times_1 \L_1\times_2\cdots\times_s \L_s).
\end{equation*} 
Fortunately, this is simply another CP tensor:
\begin{equation*}
    \A_{s+1} = \mat{\cdot, s+1}\cp(\L_1^\top(\H_1)_{J_1,:}, \ldots,\L_s^\top (\H_s)_{J_s,:}, \H_{s+1},\ldots,\H_d),
\end{equation*}
meaning that we can still efficiently sketch the columns of this flattened matrix.
Our full algorithm for CP CoreID is outlined in \cref{alg:cid_cp}. 

\begin{algorithm}
    
    \textbf{Input:} CP factors $\m H_1,\ldots,\m H_d$, target rank $\v k = (k_1,\ldots,k_d)$, matrix ID solver $\textsc{MatrixID}$ using QR-based methods.\\
    \textbf{output:} index sets $J_1,\ldots,J_d$, satellite nodes $\X_1,\ldots,\X_d$.
    
    \vspace*{1ex}
    \begin{algorithmic}[1]
        \caption{CoreID for CP tensors}\label{alg:cid_cp}
        \Function{CP-CoreID}{}
        \State Denote $\t T := \cp(\m H_1,\ldots,\m H_d)$
        \For{$i = 1,\ldots,d$}
        \State Generate a KFJLT or tensor sketch operator $\S$ that is a $\delta$-SE of $\mat{\cdot,i}\t T$
        \State $\B\gets\S\cdot\mat{\cdot,i}\t T$
        \State $J_i, \m X_i \gets \textsc{MatrixID}(\B, k_i)$
        \State Compute row-wise QR, $\X_i = \L_i \Q_i$
        \State $\m H_i \gets \m L_i^\top (\m H_i)_{J_i, :}$
        \EndFor
        \EndFunction
        \\
        \Return index sets $J_1,\ldots,J_d$ and nodes $\X_1,\ldots,\X_d$.
    \end{algorithmic}
\end{algorithm}
\noindent
To analyze the complexity, we assume all $\m H_i$ are of shape $n\times p$, $p \geq n$ (otherwise one can reduce to $p = n$ by passing to a subspace), $k_i = k$, and take $d$ to be an $\cO(1)$ constant.
Let $m$ be the sketch dimension.
Both the KFJLT and tensor sketch run in $\wt{\cO}((m + n)p)$ time.
In total $d$ sketches are needed, and after sketching, the complexity is equal to that of $\textsc{MatrixID}$, which typically runs in $\cO(kmn)$ time.
After selection on each mode, we compute $\L$ and update the $\m H$ matrix in $\cO((n + p) k^2)$ time.
Hence, the total time complexity of \cref{alg:cid_cp} is $\cO(pk^2 + kmn) + \wt{\cO}((n + m)p)$.

\subsection{CoreID for Sparse Tensors}\label{sec:cid_sparse}

Suppose the selection order over the modes is (without loss of generality) $1\rightarrow2\rightarrow\cdots\rightarrow d$.
For selection over the first mode on a sparse tensor $\t T$, we use a count sketch operator \cite{clarkson2017low} {as an $\eps$-SE} to sketch the columns of $\A = \mat{\cdot, 1} \t T$. 
However, the situation becomes more complicated after column selection for the first mode.
Indeed, suppose the selected index sets and satellite nodes are respectively $J_i$ and $\X_i$, $i \leq s$.
Then when selecting from mode $s+1$, we need to sketch the columns of the following flattening
\begin{equation*}
    \A_{s+1} = \mat{\cdot,s+1}\left(
        \t T_{J_1,\ldots,J_s,\ldots}\times_{1,\ldots,s}(\X_1\otimes\ldots\otimes\X_s) 
    \right).
\end{equation*}
So $\A_{s+1}$ has a ``tensor product times sparse matrix" structure.
To sketch such a matrix efficiently, we use a composition of KFJLT, count sketch, and FJLT.

Specifically, we define the sketch network $G_{s+1}$ in tensor diagram notation shown in \cref{fig:sparse_cid}.
\begin{figure}[h!]
    \centering
    $G_{s+1}(\S_1,\S_2,\S_3,\t T,\X_1,\ldots,\X_s) \,=\,$
    \includegraphics[width = 0.45\textwidth, valign = c]{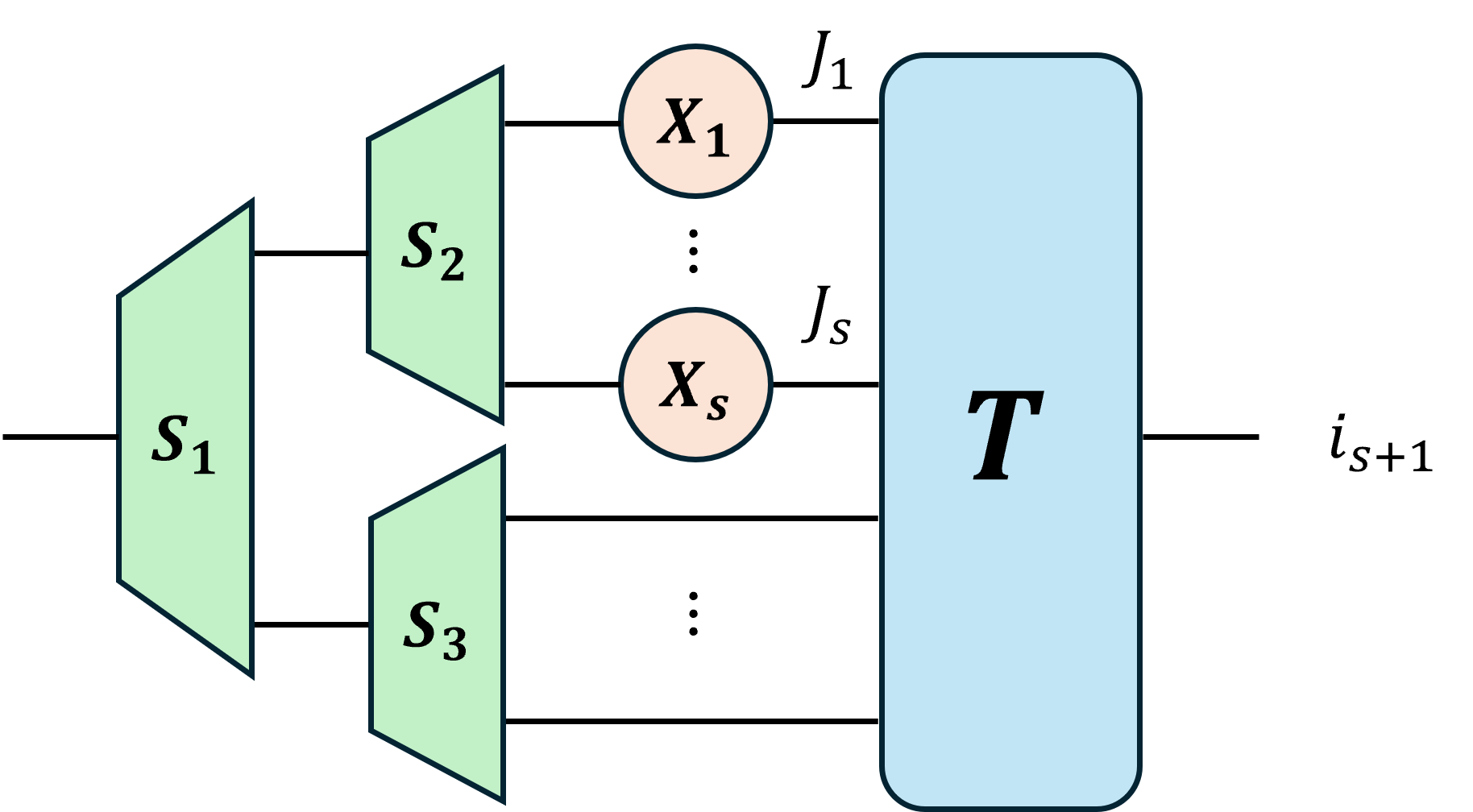}.
    \caption{Tensor diagram for the sketch operation on sparse tensor $\t T$. A connecting edge means contraction. See \cite{orus2014practical} for more introductory details on tensor diagrams. 
    In the diagram, $\S_2$ is a KFJLT or tensor sketch operator, $\S_3$ is a count sketch operator, and $\S_1$ is a general unstructured sketch operator (e.g. FJLT or Gaussian). See main text for details.
    }
    \label{fig:sparse_cid}
\end{figure}
For simplicity let us assume $\t T$ has a shape of $n^d$ and the target rank is $k_1 = \ldots = k_d = k$.
We choose $\S_1 \in\R^{m_1\times m_2m_3}$ to be an FJLT operator, $\S_2\in\R^{m_2\times k^s}$ a KFJLT operator, and $\S_3\in\R^{m_3\times n^{d-s-1}}$ a count sketch operator.
The sketch $\S_1$ is optional in that it can be omitted if $m_2m_3$ is sufficiently small.
When $s = 0$ (selecting over the first mode), KFJLT $\S_2$ is not used, and when $s = d-1$ (selecting over the last mode), count sketch $\S_3$ is not used.
According to \cite{woodruff2014sketching,ahle2020oblivious,haselby2023modewise}, fixing $\eps$ as a constant, given that the matrix of flattened tensor $\mat{\cdot, i}\t T$ has rank $r \leq k$ (which holds in particular if $\t T$ has a Tucker rank $r$) 
the composed sketch operator $\S_1(\S_2\otimes\S_3)$ is an SE of $\A_{s+1}$ with high probability if $m_1 = \wt\cO(r)$, $m_2 = \wt\cO(r^2)$, and $m_3 = \cO(r^2)$.
In practice, if the matricized tensor is close to being rank $r$, these sketching dimensions also work well as explained in \cite{cohen2015optimal}.

To contract $G_{s+1}$, we first compute $\S_3 \t T$, which takes $\cO(\nnz(\t T))$ time.
Then since $\S_2(\X_1\otimes\cdots\otimes \X_s)$ contains $m_2$ rank-1 tensors as rows, it takes $\cO\left(m_2\min\{s\cdot\nnz(\t T), k^d\}\right)$ time to contract it with $\S_3\t T$,
assuming the Fourier transform on each $n\times k$ matrix $\X_i$ is negligible in comparison.
Finally, the optional FJLT operator $\S_1$ can be applied in $\wt\cO(m_2m_3n)$ time.
So the total complexity is 
$\wt\cO(m_2m_3 n) + \cO(m_2 \min\set{k\nnz(\t T),~k^d})$.
We outline this algorithm in \cref{alg:cid_sparse}.

\begin{algorithm}

    \textbf{Input:} Sparse tensor $\t T$, target rank $\v r = (r_1,\ldots,r_d)$, matrix ID solver $\textsc{MatrixID}$ using norm-based sampling.\\
    \textbf{output:} index sets $J_1,\ldots,J_d$, satellite nodes $\X_1,\ldots,\X_d$.

    \vspace*{1ex}
    \begin{algorithmic}[1]
        \caption{CoreID for sparse tensors}
        \label{alg:cid_sparse}
            \Function{Sparse-CoreID}{}
                \For{$i = 1,\ldots,d$}
                    \State Generate sketch operators $\S_1$, $\S_2$, $\S_3$ as needed
                    \State Contract $\B\gets G_{i}(\S_1,\S_2,\S_3,\t T,\L_1,\ldots,\L_{i-1})$ as described above
                    \State $J_i, \m X_i \gets \textsc{MatrixID}(\B, r_i)$
                    \State Compute row-wise QR, $\X_i = \L_i \Q_i$
                \EndFor
            \EndFunction
        \\
        \Return index sets $J_1,\ldots,J_d$ and nodes $\X_1,\ldots,\X_d$.
    \end{algorithmic}
\end{algorithm}

\section{Satellite Interpolative Decomposition} \label{sec:sid}

In the last section we looked at the CoreID problem, in which we approximate the tensor with a Tucker decomposition where the core tensor is required to be a subtensor of the original tensor.
As pointed out before, CoreID is a problem unique to tensors of order $d \geq 3$.
As opposed to CoreID, SatID is a direct higher order generalization of matrix CUR decomposition.
Specifically, we seek for a Tucker approximation to $\t T$ in which the satellite nodes $\m T_i$ are required to be a subset of $k_i$ columns of the matricizations of $\t T$.
That is,
\begin{equation*}
    \m T_i = \left(\mat{i, \cdot}\t T\right)_{:, J_i}\in \R^{n_i \times k_i}
\end{equation*}
Let $\t C \in \R^{k_1\times\ldots\times k_d}$ be an unrestricted core tensor. In SatID we seek to solve
\begin{equation*}
    \min_{\t C, J_1,\ldots,J_d} \|\t T - \tucker(\t C,\m T_1,\ldots,\m T_d)\|.
\end{equation*}

Recall that for the CoreID problem, despite the benefits of the independent approach in computation and interpretation, we resorted to an adaptive sequential approach to control the error. 
However, the sequential approach is not necessary for SatID.
It is not hard to show that for SatID the reconstruction error using the independent approach can be well-controlled using the matrix ID errors.
We state this result below in \cref{prop:sid_err}.

\begin{restatable}{proposition}{satIdError}
    \label{prop:sid_err}
    For each $i$, let $J_i$ be set of selection.
    Denote the error in the matrix column selection on mode $i$ as
    \begin{equation*}
        \eps_i = \min_{\X}\|\mat{i, \cdot}\t T - (\mat{i,\cdot}\t T)_{:, J_i}\cdot \X\|.
    \end{equation*} 
    Then for these index sets $J_1,\ldots,J_d$, the error in SatID is bounded by
    \begin{equation*}
        \min_{\t C} \|\t T - \tucker(\t C,\m T_1,\ldots,\m T_d)\| 
        \leq
        \sum_{i=1}^d \eps_i,
    \end{equation*}
    where $\m T_i = \left(\mat{i, \cdot}\t T\right)_{:, J_i}\in \R^{n_i \times r_i}$.
    If the selection algorithm is random and
    \begin{equation*}
        \eps_i = \E \min_{\X}\|\mat{i, \cdot}\t T - (\mat{i,\cdot}\t T)_{:, J_i}\cdot \X\|.
    \end{equation*}
    Then similarly
    \begin{equation*}
        \E \min_{\t C} \|\t T - \tucker(\t C,\m T_1,\ldots,\m T_d)\| \leq \sum_{i=1}^d \eps_i.
    \end{equation*}
\end{restatable}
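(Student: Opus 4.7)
The plan is to construct an explicit core $\t C$ such that $\tucker(\t C,\m T_1,\ldots,\m T_d)$ achieves error at most $\sum_i \eps_i$, and then invoke optimality over $\t C$. For each mode $i$, let $\m P_i = \m T_i \m T_i^+$ be the orthogonal projector onto $\col(\m T_i)$. Since $\m T_i$ consists of the columns of $\mat{i,\cdot}\t T$ indexed by $J_i$, the optimal right factor in the matrix problem is $\X_i = \m T_i^+ \mat{i,\cdot}\t T$, and by hypothesis
\begin{equation*}
    \|(\I - \m P_i)\,\mat{i,\cdot}\t T\| \;=\; \min_{\X}\|\mat{i,\cdot}\t T - \m T_i \X\| \;=\; \eps_i.
\end{equation*}

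Next I would introduce the partially projected tensors $\t T^{(0)} = \t T$ and $\t T^{(j)} = \t T \times_1 \m P_1 \times_2 \cdots \times_j \m P_j$ for $j = 1,\ldots,d$, and telescope:
\begin{equation*}
    \|\t T - \t T^{(d)}\| \;\leq\; \sum_{j=1}^d \|\t T^{(j-1)} - \t T^{(j)}\|.
\end{equation*}
The key step is to bound the $j$-th summand by $\eps_j$. When one unfolds along mode $j$, the projections on the other modes $i<j$ become a Kronecker product acting on the right of $\mat{j,\cdot}\t T$; concretely,
\begin{equation*}
    \mat{j,\cdot}(\t T^{(j-1)} - \t T^{(j)}) \;=\; (\I - \m P_j)\,\mat{j,\cdot}\t T \cdot \m Q_j,
\end{equation*}
where $\m Q_j$ is the Kronecker product of $\m P_1,\ldots,\m P_{j-1}$ with identities on the remaining factors, up to a permutation of columns. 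Since a Kronecker product of orthogonal projectors is itself an orthogonal projector and multiplication by a projector on the right can only decrease the Frobenius norm, we get $\|\t T^{(j-1)} - \t T^{(j)}\| \leq \|(\I-\m P_j)\,\mat{j,\cdot}\t T\| = \eps_j$. This Kronecker argument is the one genuinely non-bookkeeping step, and is the place where any reader should slow down.

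Finally, $\t T^{(d)} = \tucker(\t C^\star,\m T_1,\ldots,\m T_d)$ with the explicit core $\t C^\star = \t T \times_1 \m T_1^+ \times_2 \cdots \times_d \m T_d^+$, since $\m P_i = \m T_i \m T_i^+$. Therefore
\begin{equation*}
    \min_{\t C}\|\t T - \tucker(\t C,\m T_1,\ldots,\m T_d)\| \;\leq\; \|\t T - \t T^{(d)}\| \;\leq\; \sum_{j=1}^d \eps_j,
\end{equation*}
which is the claimed bound. For the randomized version, the $\t T^{(d)}$-construction is defined sample-wise from the random sets $J_i$, the pathwise inequality above persists, and taking expectations together with linearity and the definition of $\eps_i$ as $\E\min_{\X}\|\cdot\|$ gives the second assertion.
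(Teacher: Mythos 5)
Your proof is correct and follows essentially the same route as the paper's: both construct the reconstruction via orthogonal projectors $\m P_i$ onto $\col(\m T_i)$, telescope over the partially projected tensors, and bound each increment by $\eps_j$ using the fact that multiplying a matricization by an orthogonal projector (in your version, a Kronecker product of projectors) cannot increase the Frobenius norm. The expectation case is likewise handled identically, by taking expectations of the pathwise inequality.
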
 

\begin{proof}
    See \cref{app:sid_errbnd}.
\end{proof}

Consequently, we adopt the simple independent approach to compute SatID, with the baseline algorithm outlined in \cref{alg:sid}.

\begin{algorithm}

    \textbf{Input:} tensor $\t T$, target rank $\v k = (k_1,\ldots,k_d)$, matrix ID algorithm $\textsc{MatrixID}$.\\
    \textbf{output:} core node $\t C$, satellite nodes $\m T_1,\ldots,\m T_d$.

    \vspace*{1ex}
    \begin{algorithmic}[1]
        \caption{Skeleton for tensor SatID algorithm}
            \label{alg:sid}
            \Function{BasicSatID}{}
            \For{$i = 1,\ldots,d$}
                \CommentIL{select on the $i$th mode}
                \State $J_i, \m T_i \gets \textsc{\textsc{MatrixID}}(\mat{i, \cdot}\t T, k_i)$ 
            \EndFor
            \State Solve the core $\t C \gets\argmin_{\t X}\|\t T - \tucker(\t X,\m T_1,\ldots,\m T_d)\|$
            \EndFunction
        \\
        \Return $\t C$, $\m T_1,\ldots,\m T_d$.
    \end{algorithmic}
\end{algorithm}

Compared to CoreID, controlling the error in SatID is straightforward, but the computation of SatID can seem less tractable: on an $n^d$ tensor, if a QR-based algorithm is used when selecting $J_1$, one needs to compute $n^{d-1}$ scores for the $n^{d-1}$ columns in $\mat{1,\cdot}\t T$.
This is far from optimal if the tensor is structured and in particular may require more storage than the structured representation of the tensor itself!

To solve this problem, we propose using norm sampling in matrix ID with an efficient marginalization trick to sample columns efficiently according to the $n^{d-1}$ scores, without forming them all explicitly.
In \cref{sec:margin}, we introduce this marginalization trick for general tensors.
In \cref{sec:sid_cp}, we detail how this marginalization trick and random sketching can be used to compute SatID for CP tensors.
The resulting approach is summarized in \cref{alg:sid_cp}.
Lastly in \cref{sec:sid_sparse}, we introduce the algorithm for computing SatID for sparse tensors.
For sparse tensors, there are only $\cO(\nnz (\t T))$ nonzero scores to compute, which means it is often acceptable to directly compute and store the scores. 
Thus, in \cref{sec:direct} we give an efficient direct algorithm (\cref{alg:sid_sparse_direct}) where the scores are directly computed.
In \cref{sec:nnzalg}, we show how the marginalization trick plus sketching can improve the complexity of \cref{alg:sid_sparse_direct} for large and high-order tensors.
Specifically, the complexity of this algorithm is $\cO(\nnz(\t T) + m n^2 k^2)$ for an $n^d$ tensor with target rank $k$ (\cref{prop:sparse_sid_complexity}), where $d$ is regarded as an $\cO(1)$ constant and $m$ is the sketch dimension.

\subsection{Marginalized Norm Sampling for SatID} \label{sec:margin}

To make the discussion clearer, we assume the tensor has a size of $n^d$.
Notice that for index set $J_i$, a selected column index $\v b = (b_1,\ldots,b_{i-1},b_{i+1},\ldots,b_d)$ is a tuple of length $d-1$, with $b_k \in [n]$.
The space from which $\b$ is sampled is rather large, of size $n^{d-1}$.
To accelerate this sampling, we sample the entries of $\b$ one by one (or `autoregressively') from suitable conditional marginal  distributions. 
In this way we avoid computing the entire joint distribution.

Since the independent approach is used, we consider without loss of generality the selection of $J_d$, the subset for the $d$-th (final) mode.
Suppose we have selected a set of column indices $I$ into $J_d$, and we are selecting the next column.
For simplicity, we let $\v i$ denote the multi-index $(i_1,\ldots,i_{d-1})$ for columns in $\A := \mat{d, \cdot}\t T$.
Recall that to sample columns in norm sampling, we need to compute
\begin{equation*}
    d^{(I)}_{\v i} := \|\A_{:, \v i} - \Q_I\Q_I^\top\A_{:, \v i}\|^2 = \min_{\x}\|\A_{:, I} \x - \A_{:, \v i}\|^2,
\end{equation*}
where, as before, $\Q_I$ is an orthonormal basis of the space spanned by selected columns $\A_{:, I}$.
Let $X = (X_1,\ldots,X_{d-1})$ be the random variable denoting the index of the next selected column of $\A$. 
Then norm sampling requires us to sample $X$ from the law 
\begin{equation*}
    \pr(X = (i_1\ldots i_{d-1})) ~\propto~ d^{(I)}_{i_1\ldots i_{d-1}}.
\end{equation*}
The basic idea is to first sample $i_1$ from its marginal distribution, and then sample $i_2$ given $i_1$ and continue until $X$ is fully sampled.
To this end note that 
\begin{equation*}
    \pr(X_1 = i_1) ~\propto~ \sum_{i_2,\ldots,i_{d-1}} d^{(I)}_{i_1\ldots i_{d-1}}. 
\end{equation*}
To figure out the sum, we compute $\Q_I \in\R^{n\times |I|}$ explicitly and derive
\begin{align}
    \pr(X_1 = i_1) 
    &~\propto~
    \sum_{i_2,\ldots,i_{d-1}} \sum_{a,b}\A_{i_1i_2\ldots i_{d-1}, a}(\m I - \Q_I\Q_I^\top)_{a,b}\A_{i_1i_2\ldots i_{d-1}, b}\nonumber\\
    &=
    \Bigl((\mat{\cdot, 1}\t T)^\top \mat{\cdot, 1}\t T\Bigr)_{i_1,i_1}
    -
    \Bigl((\mat{\cdot, 1}(\t T \times_d \m Q_I))^\top \mat{\cdot, 1}(\t T\times_d \m Q_I)\Bigr)_{i_1,i_1}\nonumber\\
    &=
    \|(\mat{\cdot, 1}\t T)_{:, i_1}\|_2^2 - \|(\mat{\cdot, 1}(\t T\times_d \m Q_I))_{:, i_1}\|^2_2\nonumber \\
    &= 
    \|(\mat{\cdot, 1}(\t T\times_d \m Q_{I^{c}}))_{:, i_1}\|^2_2 \label{eq:margin_norms}
\end{align}
\cref{fig:marg} gives a tensor diagram illustration of this marginalization process.
Thus, to sample $i_1$, it suffices to compute the column norms of $\mat{\cdot, 1}(\t T\times_d \m Q_{I^c})$.
These norms can be efficiently updated as $\Q_I$ gets augmented every time a new column is added to $J_d$.
On top of everything, sketching methods can be applied to approximate the norms.
Once $i_1$ is sampled, we can repeat this process for $i_2$ on the sliced tensor $\t T_{i_1,\ldots}$ until we finish sampling $X$.
In this way, all $n^{d-1}$ scores need not be computed or considered directly.

\begin{figure}
    \centering
    \begin{minipage}[b]{.18\textwidth}
        \centering
        \includegraphics[height=12ex]{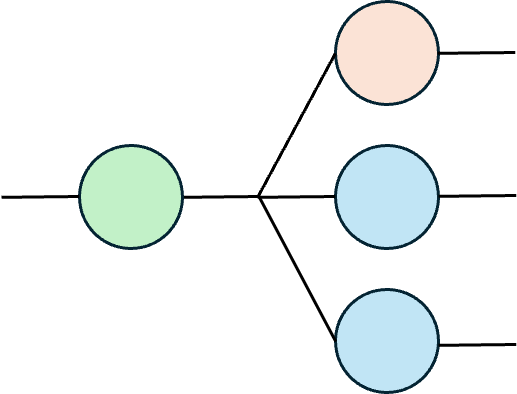}\\
        (a)
    \end{minipage}
    \hspace{0.03\textwidth}
    \begin{minipage}[b]{.24\textwidth}
        \centering
        \includegraphics[height=12ex]{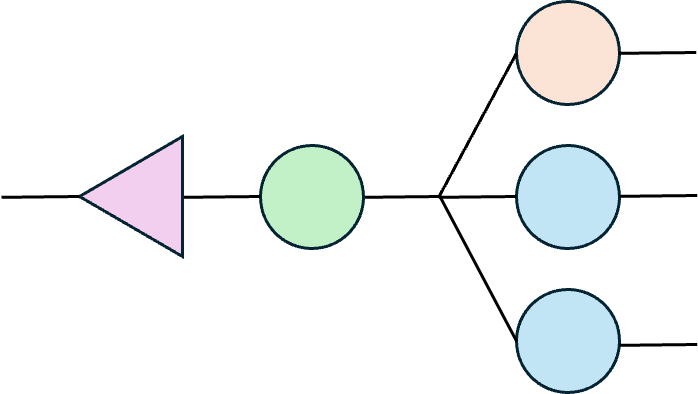}\\
        (b)
    \end{minipage}%
    \hspace{0.03\textwidth}
    \begin{minipage}[b]{.48\textwidth}
        \centering
        \includegraphics[height=12ex]{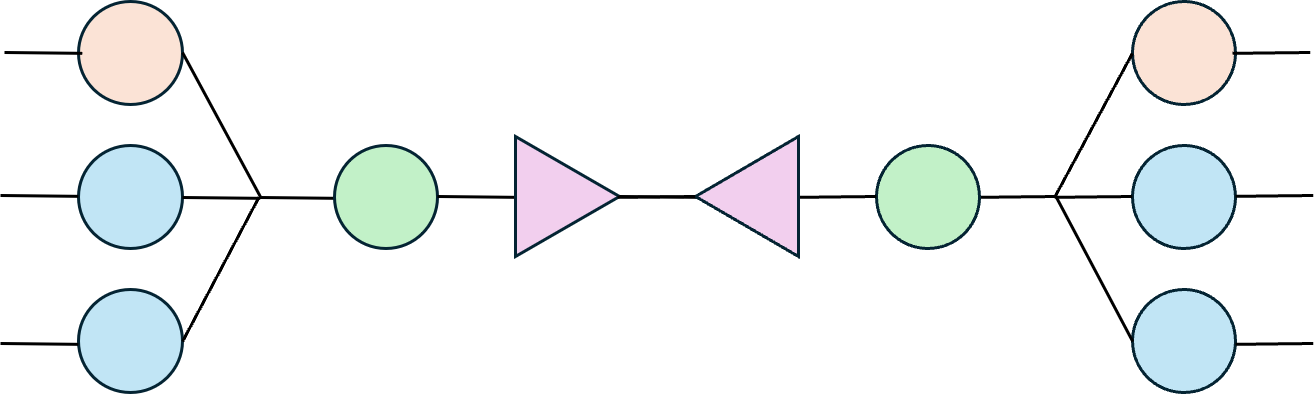}\\
        (c)
    \end{minipage}\\
    \vspace{2ex}
    \centering
    \begin{minipage}[b]{.45\textwidth}
        \centering
        \includegraphics[height=14ex]{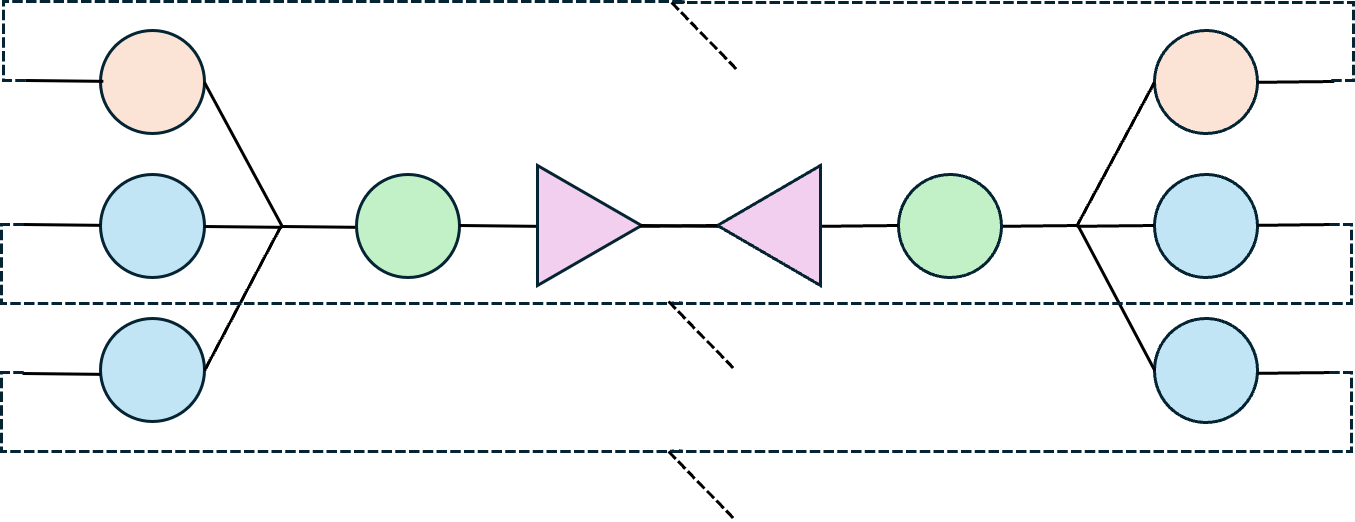}\\
        (d)
    \end{minipage}
    \begin{minipage}[b]{.45\textwidth}
        \centering
        \vspace{-2ex}
        \includegraphics[height=12ex]{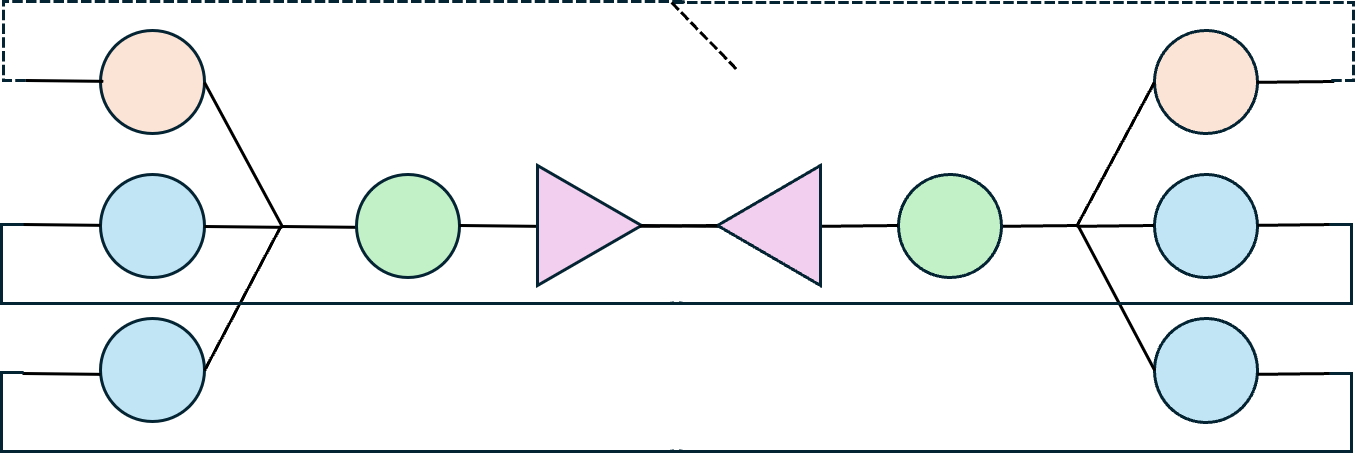}\\
        \vspace{2ex}
        (e)
    \end{minipage}\\
    \vspace{2ex}
    \centering
    \begin{minipage}[b]{.36\textwidth}
        \centering
        \includegraphics[height=12ex]{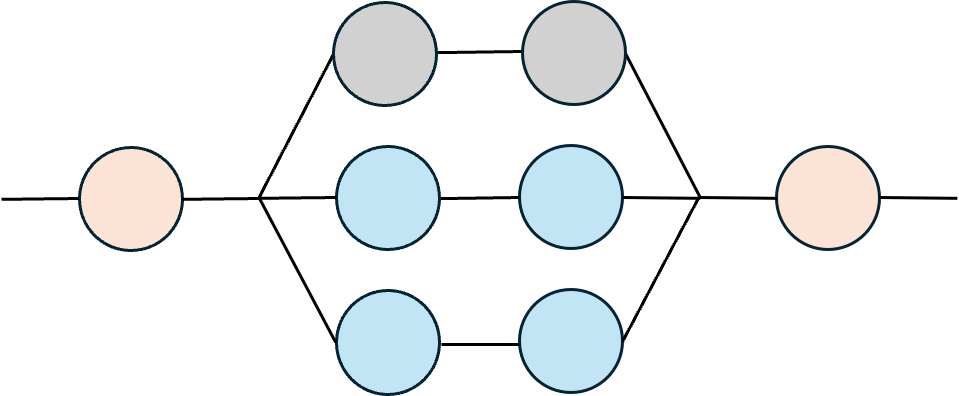}\\
        (f)
    \end{minipage}
    \begin{minipage}[b]{.48\textwidth}
        \centering
        \includegraphics[height=12ex]{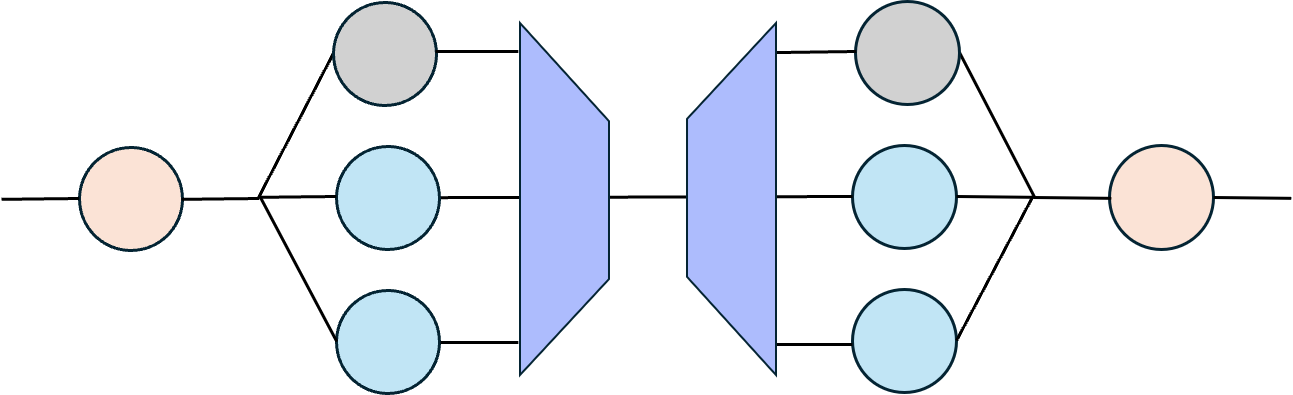}\\
        (g)
    \end{minipage}
    \caption{
        Illustration of the marginalization trick in tensor diagram.
        (a) The tensor being decomposed in this example is given as a CP tensor of order 4.
        The factors are the 4 circle nodes.
        (b) We are performing selection for the green mode, adding to $I$ a new column index $\b$, which is a triplet indexing the other 3 modes (starting with the orange mode). 
        The diagram represents the matrix $\Q_{I^c}^\top \A$.
        The pink triangle represents the $\Q_{I^c}$ matrix.
        (c) To compute the column norms, we need the diagonal of $\K = (\Q_{I^c}^\top\A)^\top (\Q_{I^c}^\top\A)$. $\K$ is depicted in this diagram.
        (d) The column norms (i.e, the diagonal of $\K$) are represented by the tensor in this diagram.
        (e) There are $n^3$ scores, and we want to avoid forming them all explicitly. The marginalization trick samples the 3 indices in $\b$ one by one (or `autoregressively') by computing the conditional marginal distribution for each index. We start with the orange index, whose marginal distribution is given by summing over the outgoing legs from the blue nodes in (d).
        (f) The contraction of the green and the pink nodes gives the grey node, which is efficiently updated every time $I$ gets augmented. The marginal distribution of the orange index in simply the diagonal of the matrix in (f). 
        (g) This can be computed efficiently by inserting a `stochastic resolution of the identity' $\S^\top \S$ (where $\S$ is a KFJLT sketch matrix, corresponding to the darker blue nodes) along the vertical midline of the diagram, to avoid the $\cO(p^2)$ cost where $p$ is the number of rank-1 tensors in the CP tensor. 
        Once the orange index in $\b$ is sampled, we repeat this process on 2 other modes (where in each stage the order of the tensor is reduced by 1).
    }
    \label{fig:marg}
\end{figure}

\subsection{SatID for CP Tensors} \label{sec:sid_cp}

Consider $\t T = \cp(\H_1,\ldots,\H_d)$, where $\H_i \in \R^{n\times p}$ are given explicitly but $\t T$ itself is not explicitly available.
We want to convert $\t T$ into SatID format without ever incurring a computation or storage cost of order $\Omega(\min(p^2, n^{d-1}))$.
To achieve this, we combine the marginalization approach and matrix sketching to efficiently carry out the norm sampling algorithm.
Specifically, in each column selection iteration, $I$ is given and we find an sketch $\S$ and approximate the norms in \eqref{eq:margin_norms} for finding $\pr(X_1 = i_1)$.
Then after sampling $i_1$ we repeat this for $i_2$ on $\t T_{i_1,\ldots}$ until a new column index is sampled, and augment $\Q_I$.
Function \textsc{SampleRow} in \cref{alg:sid_cp} below summarizes the proposed approach (with many steps done in an implicit but efficient manner).
We use the KFJLT operator $\S$ for efficient application to the CP tensor.
\cref{alg:sid_cp} can then be used as the \textsc{MatrixID} subroutine (Line 3) in \cref{alg:sid} for CP tensors. A graphical depiction of the steps of \cref{alg:sid_cp} is provided in Figure~\ref{fig:marg}.

\begin{algorithm}[!h]
    \textbf{Input:} CP factors $\H_1,\ldots,\H_d$, target rank $k$.\\
    \textbf{output:} Sampled index set $J_d$ and corresponding $\m T_d$ for node $d$.

    \vspace*{1ex}
    \begin{algorithmic}[1]
        \caption{norm sampling on mode $d$ with marginalization and sketch for CP tensors}
        \label{alg:sid_cp}
            \Function{SampleRow}{}
                \State Initial CP weights $\w \gets \ind{p}$
                \State $J_d = \{\}$
                \State $\m T_d = 0_{r\times n}$
                \State $\wh{\H}_d \gets \H_d$ \CommentIL{A copy of $\H_d$, will be modified at runtime}
                \For{$t = 1,\ldots,k$} \CommentIL{Compute the $t$th column}
                    \State $\b \gets (\text{Null},\ldots,\text{Null})$ of size $d-1$, storing the index of the sampled row
                    \For{$s = 1,\ldots,d-1$} \CommentIL{Compute index $b_s$}
                        \State Generate sketch operator $\S$
                        \State Sample $i\gets \sampleIndex(\w, \H_s,\ldots, \H_{d-1},\wh{\H}_d, \S)$
                        \State $b_s\gets i$
                        \State Update $\w \gets (\H_s)_{i,:} \ast \w$ \CommentIL{Effectively taking the slice $\t T \gets \t T_{b_1\ldots b_s\ldots}$}
                    \EndFor
                    \State $J_d \gets J_d \union \{\b\}$
                    \State Save the sampled row $(\m T_d)_{:, t}\gets \H_d \cdot \w$
                    \State Compute the projection direction $\v q \gets \wh{\H}_d \cdot \w$
                    \State Deflate vector $\v q$ by setting $\wh{\H}_d\gets \wh{\H}_d - \v q\v q^\top \wh{\H}_d$ 
                \EndFor
            \EndFunction
        \\
        \Return Index set $J_d$, satellite matrix $\m T_d$
    \end{algorithmic}

    \vspace*{1ex}

    \begin{algorithmic}[1]
        \Function{SampleIndex}{$\w$, $\A_1,\ldots,\A_\ell$, $\S$}
        \CommentIL{Sampling an index from the first mode}
            \State Apply sketch $\M \gets \S \mat{\cdot, 1}\cp(\w, \A_1,\ldots,\A_\ell) \in \R^{m \times n}$
            \State Compute the score $d_i \gets \norm{\M_{:, i}}^2$ for each column of $\m M$ 
            \State Sample $i \in [n]$ following $\pr(i) \,\propto\, d_i$
        \EndFunction
        \\
        \Return Index $i$
    \end{algorithmic}
\end{algorithm}

For computing the core tensor $\t C$, since the tensor is already in CP format, after computing $J_1,\ldots,J_d$ and satellite notes $\m T_1,\ldots,\m T_d$, we are left with a core in CP format of shape $(k_1,\ldots,k_d)$ given by
\begin{equation*}
    \t C = \t T \times_1\m T_1^\dagger \times_2\cdots\times_d \m T_d^\dagger
    =
    \cp(\m T_1^\dagger \m H_1,\ldots,\m T_d^\dagger\m H_d).
\end{equation*}
This can be used as the final result, or one can contract it into an unstructured core, or CP rank pruning methods can be applied to reduce the rank \cite{sherman2020estimating,malik2020fast}.

Now we analyze the complexity of \cref{alg:sid_cp}. 
We assume the CP factors $\H_i$ are of shape $n\times p$, and $d$ is an $\cO(1)$ constant.
In \textsc{SampleIndex}, matrix $\M$ is computed in $\wt{\cO}((n + m)p)$ time, and the scores are computed in $\cO(mn)$ time, where $m$ denotes the sketch dimension. 
\textsc{SampleIndex} is repeated $d-1 = \cO(1)$ time for each column, so the complexity for generating a column index $\v b$ is $\cO(mn) + \wt{\cO}((n + m)p)$.
Other operations are all $\cO(np)$, and hence the total complexity for SatID using \cref{alg:sid_cp} as the matrix ID routine is
$k\cdot (\cO(mn) + \wt{\cO}((n + m)p))$.  

Regarding the choice of $m$, as opposed to the CoreID problem where $\S$ is a subspace embedding of the flattened matrix $\A$, here it is sufficient for $\S$ to preserve the norms of $n$ columns of $\A$.
The following definition is handy (see e.g., \cite{woodruff2014sketching}).
\begin{definition}
    [$\eps$-JLT]\label{def:jlt}
    Given a set of vectors $\set{\x_1,\ldots,\x_p} \sset \R^N$ and $\eps > 0$, we say that a matrix $\S\in\R^{m\times N}$ is an $\eps$-JL transform ($\eps$-JLT) on these vectors if for all $i\in[p]$, $\|\S\x_i\|^2 \asymp (1 \pm \eps) \|\x_i\|^2$.
\end{definition}
Evidently, being an $\eps$-JLT on the columns of $\A$ is a relaxed condition compared to being an $\eps$-SE of $\A$.
Let $\S$ be a KFJLT operator of order $c$.
The optimal known bound for $m$ in order for $\S$ to be an $\eps$-JLT on the $n$ columns is $m = \wt{\cO}(\eps^{-2} (\log n)^{c})$ \cite{bamberger2022johnson}.

Finally, we combine \cref{prop:sid_err} and \cref{thm:pert_rpchol} to bound the SatID reconstruction error using \cref{alg:sid_cp} as the matrix ID routine. 
Suppose that at marginalization level $\ell$ (where $\ell = 1,\ldots,d-1$), the sketch matrix $\S_\l$ is an $\eps$-JLT on the columns of $\mat{\cdot, \l}\t T_{i_1\ldots i_{\l-1}\ldots}$. That is for all $j \in [n]$,
\begin{equation*}
    \|\S_\l(\mat{\cdot, \l}\t T_{i_1\ldots i_{\l-1}\ldots})_{:, j}\|^2 
    \asymp 
    (1\pm\eps)
    \|(\mat{\cdot, \l}\t T_{i_1\ldots i_{\l-1}\ldots})_{:, j}\|^2.
\end{equation*}
Then, since the unsketched sampling probability $\pr(X_\ell = j)$ is proportional to the right-hand side, and the sketched sampling probability $\wt{\pr}(X_\ell = j)$ is proportional to the left-hand side, we have the following relation for all $\eps \in (0, 1)$
\begin{equation*}
    \wt{\pr}(i_\l \,|\, i_1,\ldots,i_{\l-1}) \asymp \frac{1\pm \eps}{1\mp \eps}\pr(i_\l \,|\, i_1,\ldots,i_{\l-1}).
\end{equation*}
Hence, the joint sampling distribution has the relation
\begin{equation*}
    \wt{\pr}(i_1,\ldots,i_{d-1}) \asymp \left(\frac{1\pm \eps}{1\mp \eps}\right)^{d-1}\pr(i_1,\ldots,i_{d-1}).
\end{equation*}

Thus, a direct application of \cref{lem:approx_rpchol} 
gives the following bound on each (without loss of generality we only bound the $d$-th) matrix ID error in CP SatID.

\begin{restatable}{theorem}{cpsid}\label{thm:pert_cpsid}
    Fix any $\eps, \gd \in (0, 1)$ and a target rank $k$.
    Suppose that in \cref{alg:sid_cp}, in each call to $\textsc{SampleIndex}$ the matrix $\S$ is a $\delta$-JLT on the columns of the matrix to which it is applied.
    Let $(J, \m T)$ be the output of \cref{alg:sid_cp}.
    Let $\A = \mat{d, \cdot}\t T$.
    Then for a reference rank $r$,
    \begin{equation}\label{eq:pert_cpsid}
        \E\min_{\X}\|\A - \m T\X\|^2 \leq (1 + \eps) \|\A - \A^{(r)}\|^2
    \end{equation}
    provided that 
    \begin{equation*}
        k \geq \left(\frac{1+\gd}{1-\gd}\right)^{d-1}\cdot \left(\frac{r}{\eps} + r\log\left(\frac{1}{\eps\eta}\right)\right),
    \end{equation*}
    where $\eta = \|\A - \A^{(r)}\|^2 / \|\A\|^2$. 
\end{restatable}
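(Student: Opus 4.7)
The plan is to apply Lemma \ref{lem:approx_rpchol} on the mode-$d$ flattening $\A = \mat{d, \cdot}\t T$, with the sketched marginal sampler realized by Algorithm \ref{alg:sid_cp} playing the role of the approximate distribution $\wt{\pr}$, and with the ``exact'' reference distribution $\pr$ being pure norm sampling on $\A$.

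First I would argue that the \emph{unsketched} marginalized scheme laid out in Section \ref{sec:margin} draws its multi-index column $\v b = (i_1, \ldots, i_{d-1})$ from exactly the norm sampling law on $\A$. This is a product-rule calculation: the level-$\ell$ conditional marginal is proportional to the column squared norms given in \eqref{eq:margin_norms}, and multiplying through the levels $\ell = 1, \ldots, d-1$ recovers $\pr(\v b) \propto d^{(I)}_{\v b}$. This requires verifying that the bookkeeping of Algorithm \ref{alg:sid_cp}, namely the CP weight $\w$ (which tracks the sliced tensor $\t T_{b_1 \ldots b_{\ell-1}\ldots}$) and the deflated factor $\wh{\H}_d$ (which tracks the $\Q_{I^c}$ projection on the mode-$d$ flattening), correctly realizes these conditionals at every stage of the outer loop.

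Second, the per-level $\delta$-JLT hypothesis yields a pointwise multiplicative control of the sketched conditional law. Since $\wt{\pr}(i_\ell \mid i_{<\ell})$ is obtained by normalizing column squared norms that are $(1\pm\delta)$-preserved by the sketch, the standard ratio-of-sums argument gives
$$\wt{\pr}(i_\ell \mid i_{<\ell}) \in \left[\tfrac{1-\delta}{1+\delta},\, \tfrac{1+\delta}{1-\delta}\right] \pr(i_\ell \mid i_{<\ell}).$$
Telescoping across the $d - 1$ marginalization levels yields $\wt{\pr}(\v b) \geq \bigl(\tfrac{1-\delta}{1+\delta}\bigr)^{d-1} \pr(\v b)$, which is exactly the hypothesis of Lemma \ref{lem:approx_rpchol} with $\beta = \bigl(\tfrac{1-\delta}{1+\delta}\bigr)^{d-1}$. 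Substituting this $\beta$ into the sample-size requirement of the lemma recovers both the stated lower bound on $k$ and the error guarantee \eqref{eq:pert_cpsid}.

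The main obstacle is the first step. The algorithm never materializes $\Q_{I^c}$, and each coordinate $b_s$ of a selected multi-index is drawn before its siblings, so one must carefully match the incremental rank-one deflation of $\wh{\H}_d$ and the incremental slicing through $\w$ to the exact expression \eqref{eq:margin_norms} evaluated on the appropriate conditionally-sliced tensor. Once this correspondence is pinned down (essentially a CP analogue of Gram--Schmidt on the mode-$d$ flattening), the perturbation analysis in the second step is immediate and the theorem follows from a single invocation of Lemma \ref{lem:approx_rpchol}.
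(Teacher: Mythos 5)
Your proposal is correct and follows essentially the same route as the paper: the per-level $\delta$-JLT hypothesis gives a $\tfrac{1-\delta}{1+\delta}$ multiplicative control on each conditional marginal, telescoping over the $d-1$ levels yields $\wt{\pr}(\v b) \geq \bigl(\tfrac{1-\delta}{1+\delta}\bigr)^{d-1}\pr(\v b)$, and \cref{lem:approx_rpchol} with $\beta = \bigl(\tfrac{1-\delta}{1+\delta}\bigr)^{d-1}$ finishes the argument. The ``first step'' you flag as the main obstacle (that unsketched autoregressive marginalization reproduces exact norm sampling on $\mat{d,\cdot}\t T$) is exactly what the derivation culminating in \eqref{eq:margin_norms} establishes, so the paper treats it as already done rather than as part of this proof.
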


With this result,
the final reconstruction error of CP SatID can be bounded straightforwardly according to \cref{prop:sid_err}.
We omit the statement of this result.

\subsection{SatID for Sparse Tensors} \label{sec:sid_sparse}

In this section we develop the SatID algorithm for sparse tensors.
To begin with, we note that there are at most $\cO(\nnz(\t T))$ nonzero scores $d_{\v i}^{(I)}$ we need to compute, which means the ``brute-force'' norm sampling or norm maximization methods only takes $\cO(\nnz(\t T))$ time to find the next column.
Based on this, we first give a direct SatID algorithm that achieves $\cO(k\nnz(\t T))$ complexity, where for simplicity $k_1 = \ldots = k_d = k$ is the target SatID rank. 
Later we show how the marginalization trick and sketching techniques can be used to achieve a complexity of $\cO(\nnz(\t T) + mk^2n^2)$ on an $n^d$ tensor with sketch dimension $m$. If $\t T$ has a Tucker rank at most $r$, it is sufficient to take $m = \cO(r^2)$ (see \cref{sec:nnzalg} for more detail).
This can be a remarkable improvement on higher-order tensors.

\subsubsection{The Direct Algorithm} \label{sec:direct}

As before, we assume without loss of generality that we are selecting $J_d$.
Let $\A = \mat{d, \cdot}\t T$.
From \cref{sec:margin} recall that the scores used in norm maximization and norm sampling algorithms are
\begin{equation*}
    d_{\v i}^{(I)} = \|\A_{:, \v i} - \Q_I \Q_I^\top \A_{:, \v i}\|^2,
\end{equation*}
where $\v i = (i_1,\ldots,i_{d-1})$ is the multi-index, $I$ is the indices of selected columns, and $\Q_I$ is a basis of the space spanned by selected columns.
As a new column $\v \l$ gets selected, the basis gets augmented to
\begin{equation*}
    \Q_{I\union\set{\v \l}} = (\Q_I\vert \v q).
\end{equation*}
Thus, the updated score is
\begin{equation*}
    d_{\v i}^{(I\union\set{\v \l})} 
    =
    \|\A_{:, \v i} - \Q_{I\union\set{\v \l}} \Q_{I\union\set{\v \l}}^\top \A_{:, \v i}\|^2 
    =
    d_{\v i}^{(I)} - (\v q^\top\A)_{\v i}^2.
\end{equation*}
Note $\Q_I \in \R^{n\times |I|}$ can be stored explicitly, and $\v q$ can be obtained with negligible cost, the formula above allows us to update the scores upon each column selection at cost $\cO(\nnz(\t T))$.
This direct algorithm thus runs in $\cO(k\nnz(\t T))$ time to select each index set $J_i,~i = 1,\ldots,d$. 
The algorithm is outlined below.
This algorithm can then be used as the \textsc{MatrixID} method in the skeleton \cref{alg:sid}.

\begin{algorithm}

    \textbf{Input:} sparse matrix $\A$, target rank $k$.\\
    \textbf{output:} index set $J$.

    \vspace*{1ex}
    \begin{algorithmic}[1]
        \caption{Direct column selection for sparse flattened matrix using norm maximization or norm sampling}\label{alg:sid_sparse_direct}
            \Function{DirectSparseSID}{}
                \State If $\A$ has all-zero columns, remove them and re-index the columns of $\A$ 
                \State Denote $n$ the number of columns in $\A$
                \State $d_i \gets \|\A_{:, i}\|^2$ for $i\in[n]$
                \State $\Q = \eset,~J\gets\eset$
                \For{$s = 1,\ldots,k$}
                \CommentIL{Find $s$th column}
                    \State Get column index $\l$ using scores $d$
                    \State Orthonormalize the new column $\A_{:, \l}$ to columns in $\Q$, denote the result as $\v q$
                    \State $\Q\gets (\Q |\v q)$
                    \State $d_i \gets d_i - \|\v q^\top \A_{:, i}\|^2$ for $i \in[n]$
                    \State $J\gets J\union\{\l\}$
                \EndFor
                \State Re-index the selected indices in $J$ if all-zero columns were removed from $\A$
            \EndFunction
        \\
        \Return index set $J$.
    \end{algorithmic}
\end{algorithm}

\subsubsection{An $\cO(\nnz(\t T))$ Time Algorithm with Norm Sampling} \label{sec:nnzalg}

For many large sparse data tensors $\t T$, the complexity $\cO(k\nnz(\t T))$ is often good enough for application.
But we show that under mild assumptions, for high order sparse tensors, this can be reduced to $\cO(\nnz(\t T) + mn^2 k^2)$ with norm sampling using sketching and the marginalization trick, where $m$ is the sketch dimension.
We use count sketch matrices as our sketch operator. If the matricizations $\mat{\cdot, i} \t T$ all have rank no greater than $r$, 
or in particular if $\t T$ has Tucker rank at most $r$, the sufficient sketching dimension is $m = \cO(r^2)$.

Suppose we are selecting index sets $J_d$, and we have already selected some columns $I$ into $J_d$.
For simplicity, assume $\t T$ has a shape $n^d$.
Recall that in the marginalized sampling approach \cref{sec:margin},
we need to compute (see \eqref{eq:margin_norms})
\begin{equation*}
    \pr(X_1 = i_1) 
    ~\propto~ 
    \|(\mat{\cdot, 1}\t T)_{:, i_1}\|_2^2 - \|(\mat{\cdot, 1}(\t T\times_d \m Q_I))_{:, i_1}\|^2_2.
\end{equation*}
We use a count sketch operator \cite{pham2013fast,pagh2013compressed} $\S$ that acts on $\t T$ on modes $2,3,\ldots,d-1$.
This gives
\begin{align*}
    \pr(X_1 = i_1) 
    &~\propto~ 
    \|(\mat{\cdot, 1}\t T)_{:, i_1}\|_2^2 - \|(\mat{\cdot, 1}(\t T\times_d \m Q_I))_{:, i_1}\|^2_2\\
    &=
    \|(\mat{\cdot, 1}(\t T\times_d \m Q_I^\perp))_{:, i_1}\|^2_2\\
    &\approx
    \|(\mat{\cdot, 1}(\S \times_{2,\ldots,d-1}\t T\times_d \m Q_I^\perp))_{:, i_1}\|^2_2\\
    &= \|(\mat{\cdot, 1}\S \times_{2,\ldots,d-1}\t T)_{:, i_1}\|_2^2 - \|(\mat{\cdot, 1}(\S \times_{2,\ldots,d-1}\t T\times_d \m Q_I))_{:, i_1}\|^2_2.
\end{align*}
If $\t T$ is approximately of Tucker rank $(r,\ldots,r)$, then $\mat{\cdot, 1}(\t T\times_d\Q_I^\perp)$ approximately lies in an $r$ dimensional space.
Thus, a sketch dimension of $m \sim\cO(\eps^{-2}r^2)$ is sufficient for the third line to be equal to the second line  up to a multiplicative factor of $(1\pm\eps)$.

We first compute the sketched tensor $\S\times_{2,\ldots,d-1}\t T$ in $\cO(\nnz(\t T))$ time and then compute the norms.
This takes $\cO(|I|\nnz(\S\times_{2,\ldots,d-1}\t T))$ time.
If $\nnz(\S\times_{2,\ldots,d-1}\t T) \approx \nnz(\t T)$, then there are no savings compared to the direct algorithm.
The savings happen when $\S\times_{2,\ldots,d-1}\t T$ has a size much smaller than $\nnz(\t T)$.
The size of $\S\times_{2,\ldots,d-1}\t T$ is $mn^2$, where $m$ is the sketch dimension.
Thus, when $\nnz(\t T) \gg mn^2$, the sketching reduces the complexity of computing these norms.
We outline the algorithm for selecting $J_d$ using sketching and marginalization below. 
The same method can be applied to find $J_1,\ldots,J_{d-1}$.
Full details can be found in \cref{alg:sid_sparse_margin}.
This algorithm can then be used as the $\textsc{MatrixID}$ subroutine in \cref{alg:sid}.
The policy of when to switch to direct method in \cref{alg:sid_sparse_margin} can be determined.
See the proof to \cref{prop:sparse_sid_complexity} in \cref{app:sid_nnzalg} for details.

\begin{enumerate}[itemsep=-1ex, topsep=-1ex]
    \item To select $i_s,~s = 1,\ldots,d-1$, apply a count sketch to all modes of $\t T$ except the $d$th and $s$th modes;
    \item Compute the column norms over the sketched matrix $\mat{\cdot, s}\t T$;
    \item Select $i_s$ using the norms;
    \item Reduce the tensor $\t T \gets \t T_{i_s,\ldots}$, numbering the remaining modes $s+1,\ldots,d$;
    \item Repeat the above until $s = s'$ where a sketch no longer reduces the cost when selecting $i_{s'}$;
    \item Apply the direct method to the tensor for the remaining $i_{s'},\ldots,i_{d-1}$.
\end{enumerate}

\begin{algorithm}[!h]

    \textbf{Input:} sparse tensor $\t T$, target rank $k$.\\
    \textbf{output:} index set $J_d$.

    \vspace*{1ex}
    \begin{algorithmic}[1]
        \caption{Selecting $J_d$ in sparse SatID using sketch and marginalization}\label{alg:sid_sparse_margin}
            \Function{SketchedSparseSID}{}
                \If{Count sketch does not reduce complexity}
                    \State Proceed with the direct method, \Return
                \EndIf
                \State $\Q \gets \eset$, $J_d\gets \eset$
                \State Generate count sketch $\S$, and compute $\wh{\t T} \gets \S\times_{2,\ldots,d-1}\t T$
                \State Compute the scores for $i_1$, $d_j \gets \| (\mat{\cdot, 1}\wh{\t T})_{:, j} \|^2$ 
                \CommentIL{Use direct method for $i_1$}
                \For{$k = 1,\ldots,r$}
                    \State $\b \gets (\text{Null},\ldots,\text{Null})$ of size $d-1$, storing the index of the sampled column
                    \State Sample $i_1$ using scores $d_j$, $b_1\gets i_1$
                    \State $\t T' \gets \t T_{i_1,\ldots}$, number the remaining modes $2,3,\ldots,d$
                    \CommentIL{A copy being modified below}
                    \For{$s = 2,\ldots,d-1$}
                        \If{Count sketch on $\t T'$ does not reduce complexity}
                            \State Proceed with the direct method on $\t T'$ to find $b_s,\ldots,b_{d-1}$, \bf{break}
                        \EndIf
                        \State Generate count sketch $\S$ and compute $\wh{\t T'} \gets \S\times_{s+1,\ldots,d-1}\t T'\times_d\Q$
                        \State Compute the scores for $i_s$, $d_j^{(s)} \gets \| (\mat{\cdot, s}\wh{\t T'})_{:, j} \|^2$
                        \State Sample $i_s$ from $d_j^{(s)}$, $b_s\gets i_s$
                        \State Reduce $\t T' \gets \t T'_{i_s,\ldots}$, number the remaining modes $s+1,\ldots,d$
                    \EndFor
                    \CommentNL{We have selected a column index $\v b$}
                    \State Orthonormalize the new column $\t T_{b_1,\ldots,b_{d-1},:}$ to columns of $\Q$ to get $\v q$
                    \State $\Q\gets(\Q\,\vert\,\v q)$
                    \State Update scores for $i_1$, $d_j \gets d_j - \| \v q^\top(\mat{\cdot, 1}\wh{\t T})_{:, j} \|^2$ 
                    \State $J_d\gets J_d\union\set{\v b}$
                \EndFor
            \EndFunction
        \\
        \Return index set $J_d$.
    \end{algorithmic}
\end{algorithm}

\noindent As the algorithm suggests, the actual running time will depend on the number of nonzeros of subtensors $\t T_{i_1\ldots}$ and $\t T_{i_1i_2\ldots}$ etc. 
We introduce the notations $N_0 = \nnz(\t T)$, $N_1 = \max_{i_1} \nnz(\t T_{i_1,\ldots})$ and similarly for $s < d$, $N_s = \max_{i_1,\ldots,i_s} \nnz(\t T_{i_1,\ldots,i_s,\ldots})$.
We make following assumptions in analyzing the complexity:
\begin{itemize}[itemsep = -1ex, topsep = -1ex]
    \item[(A1)] the tensor has a shape $n^d$ and the SatID target rank is $(k,\ldots,k)$;
    \item[(A2)] $k(N_1 + \ldots + N_d) = \cO(N_0)$.
\end{itemize}
Assumption (A1) is by no means essential. It is introduced to make notation simple.
Assumption (A2) holds for many reasonable tensors when $k \ll n$.
Under the assumption the nonzero entries are distributed uniformly throughout the tensor, we expect $N_s \sim N_0 / n^s$.
If uniformity does not hold (e.g., all nonzeros concentrate in one slice of $\t T$), by selecting the order of marginalization optimally, Assumption (A2) may still hold. 
Now we summarize our result in the following statement.
\begin{restatable}{proposition}{nnzAlg} \label{prop:sparse_sid_complexity}
    Under the assumptions (A1) and (A2)\footnote{If (A2) does not hold, the complexity is $\cO(N_0 + k(N_1+ \ldots + N_d) + mn^2k^2)$.}, the proposed algorithm runs with complexity $\cO(\nnz(\t T) + mn^2k^2)$, treating $d$ as an $\cO(1)$ multiplicative constant and letting $m$ denote an upper bound for the sketch dimensions.
\end{restatable}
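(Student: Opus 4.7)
My plan is to split the running time of \cref{alg:sid_sparse_margin} into four independently bounded ingredients and then sum: (i) count-sketch applications to sparse (sub)tensors, (ii) contraction of each sketched tensor with the current orthonormal basis $\Q$ along mode $d$, (iii) marginal score computation and index sampling, and (iv) the mode-1 score update after every augmentation of $\Q$. Throughout I would treat $d$ as an $\cO(1)$ multiplicative constant and let $m$ be a uniform upper bound on the sketch dimensions used, as in the proposition statement.

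For (i), the key observation is that a count sketch applied to a sparse tensor takes time linear in its number of nonzeros. The pre-loop sketch that produces $\wh{\t T}$ costs $\cO(\nnz(\t T))$ once. For the inner stages $s\geq 2$ of the outer loop, the sketch of the subtensor $\t T'_s = \t T_{i_1,\ldots,i_{s-1},\ldots}$ costs $\cO(\nnz(\t T'_s)) \le \cO(N_{s-1})$; summing across the $k$ outer iterations and the $d-2$ stages, assumption (A2) in its aggregated form $k\sum_s N_s = \cO(N_0)$ collapses the total to $\cO(\nnz(\t T))$. For (ii) and (iii), the sketched tensor $\S \times_{s+1,\ldots,d-1} \t T'_s$ lives in $\R^{n\times m\times n}$; contracting it (densifying first if that is cheaper) with $\Q \in \R^{n\times k'}$ along mode $d$ costs $\cO(mn^2 k')$, and the column norms of the resulting $n\times m\times k'$ tensor are then assembled in a subleading $\cO(mnk')$. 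Summed over $k' = 1,\ldots,k$ these give the dominant $\cO(mn^2 k^2)$ term. Finally, (iv) reuses the precomputed $\wh{\t T}$ and applies the freshly added $\v q$ at cost $\cO(mn^2)$ per outer iteration, for a total of $\cO(mn^2 k)$, which is also subleading.

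What remains is to check that the algorithm's fallback to the direct method (triggered when count-sketching is not cheaper) does not spoil the accounting. In that branch $\nnz(\t T'_s) \le mn^2$ by the switching criterion, so the direct marginal-norm computation costs $\cO(\nnz(\t T'_s)\,k') \le \cO(mn^2 k')$ per iteration-stage and is absorbed into the $\cO(mn^2 k^2)$ budget; analogously, the direct sketch-free cost $\cO(\nnz(\t T'_s))$ per iteration is absorbed into the $\cO(\nnz(\t T))$ budget via (A2). Summing (i)--(iv) across both branches therefore yields the claimed $\cO(\nnz(\t T) + mn^2 k^2)$. The main obstacle I anticipate is purely combinatorial bookkeeping: one must aggregate the subtensor nonzero counts $N_{s-1}$ across stages and outer iterations simultaneously in order to avoid a spurious factor of $k$ multiplying $\nnz(\t T)$. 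The switching policy is tuned precisely so that this aggregation works uniformly, and if (A2) fails one recovers the footnote's $\cO(N_0 + k(N_1+\cdots+N_d) + mn^2 k^2)$ bound by dropping that final collapse.
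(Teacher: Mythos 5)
Your proposal is correct and follows essentially the same accounting as the paper's proof: the sketch applications cost $\cO(\nnz(\t T))$ in aggregate via (A2), the contraction of the $mn^2$-entry sketched tensor with the growing basis $\Q$ yields the dominant $\cO(mn^2k^2)$ term, and the direct fallback is absorbed precisely because it is only triggered when the subtensor's nonzero count is already below $mn^2$. The only difference is organizational — the paper formalizes the switching policy through an explicit min-recursion $C_s^* = \min(k^2\,\cO(N_s),\ C_{s+1}^* + k\cdot\cO(N_s + kmn^2))$ and unrolls it, whereas you verify a fixed switching criterion ingredient by ingredient — but the resulting bounds and the role of (A2) are identical.
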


\begin{proof}
    See \cref{app:sid_nnzalg}.
\end{proof}

\section{Numerical Experiments} \label{sec:exp}

In this section we demonstrate the performance of our CoreID and SatID algorithms on both CP tensors and sparse tensors.
A Python implementation of the algorithms used in the following experiments as well as demos using the code can be found \href{https://github.com/yifan8/TensorID}{online}.
First in \cref{sec:exp_cp}, we use synthetic CP data to test the CoreID (\cref{alg:cid_cp}) and SatID (\cref{alg:sid_cp}).
Next in \cref{sec:rna}, we apply CP CoreID and CP SatID to an RNA base pairing moment compression problem.
Then in \cref{sec:exp_sparse}, we test our algorithms on sparse real-world datasets for both CoreID (\cref{alg:cid_sparse}) and SatID (\cref{alg:sid_sparse_direct}). 
At a high level, we find the following:
\begin{enumerate}[itemsep=-1ex,topsep=-1ex]
    \item We can scale our approach to very large sparse tensors, e.g., the Enron tensor listed in \cref{tab:sparse} in \cref{sec:exp_sparse}. 
    Computing the decomposition is in fact much faster than computing (or even estimating) the reconstruction error for such problems.
    \item Sketching for CoreID and SatID effectively reduces computational costs without impacting approximation accuracy significantly.
    \item Thanks to the adaptive sequential approach and random sketching, our approach for sparse CoreID (\cref{alg:cid_sparse}) greatly improves the reconstruction error as well as timing compared to the previous study in \cite{minster2020randomized} on the same tensors. 
    In fact, it even outperforms the randomized HOSVD in accuracy as reported in \cite{minster2020randomized}.
    \item All QR-based methods (norm sampling, norm maximization, nuclear maximization) perform comparably on 4 tasks (CoreID/SatID on sparse/CP tensors), but
    norm maximization and nuclear selection yield smaller and more consistent errors than norm sampling over repeated (randomized) runs in our experiments.
\end{enumerate}

In figure legends, we label the methods respectively by NormSamp for norm sampling, NormMax for norm maximization, and Nuclear for nuclear maximization. 
See \cref{sec:compenv} for implementation details including software packages and hardware.

\subsection{Experiments on Synthetic CP Tensors} \label{sec:exp_cp}

We test CoreID (\cref{alg:cid_cp}) and SatID (\cref{alg:sid} plus \cref{alg:sid_cp}) on synthetic CP tensor data.
We generate an order $d = 4$ CP tensor 
\begin{equation*}
    \t T = \sum_{i = 1}^p \v a_i\otimes \v b_i\otimes \v c_i \otimes \v d_i
\end{equation*}
of shape $(n,n,n,n)$.
In the experiment, $n = 128$ and $p = 3300$.
The purpose of this synthetic test is to compare reconstruction errors and running times in a generic setting among CoreID and SatID algorithms using different matrix ID methods, with or without sketching. We also include the accuracy of the standard Tucker decomposition as a point of comparison.

First we give a high-level description of how the data is generated. 
In order to make sure that the tensor $\t T$ is nearly low-rank, we generate vectors $\u_i$, $i = 1,\ldots, 3000$ using a Gaussian mixture model in $\R^{4n}$. 
There are $r = 16$ components in the mixture.
Then each $\u_i$ is chopped into 4 vectors in $\R^n$, yielding the vectors $\a_i,\ldots,\v d_i \in \R^n $.
The remaining 300 rank-1 terms comprising $\t T$ represent noise. The factors $\a_i,\ldots,\v d_i$ for these terms are simply generated as standard Gaussian vectors.
The resulting tensor $\t T$ is approximately of CP rank $r = 16$.

More concretely, the mean vectors of the Gaussian mixture components are themselves instantiated as standard Gaussian vectors, and the covariance matrix of each mixture component is $\sigma^2 \I_{4n} $ where $\sigma = 0.2$.
The mixture weights $w_1,\ldots,w_r$ are independently and uniformly sampled from $[1, 10]$ and then normalized to the probability simplex.
In each factor matrix, in order to examine an adversarial context in which many columns are nearly collinear, we randomly select 25\% of the rows and set them to zero.
CoreID and SatID on tensor $\t T$ can be regarded as feature selection over the $n = 128$ features.

\subsubsection{CoreID for CP Tensors}

We test our CP CoreID algorithm (\cref{alg:cid_cp}), with and without sketching, using different matrix ID subroutines, including uniform sampling, norm sampling, norm maximization, and nuclear maximization.
The sketch dimension is set to $m = 128$, and we repeat each instance 8 times to measure the performance.
The averaged reconstruction accuracy for the sketched algorithm, as well as the best and worst among the 8 runs, are shown in \cref{fig:cpcid} (left);
the comparison between sketched and non-sketched algorithms, in terms of both accuracy and timing, are given in \cref{fig:cpcid} (middle, right). 
In the plot we also include the reconstruction error of the unconstrained Tucker decomposition.
This is used as a reference value since it is optimal among tensors of the specified Tucker rank.

\begin{figure}[!h]
    \centering
    \begin{minipage}{.33\textwidth}
        \centering
        \includegraphics[width=0.96\linewidth]{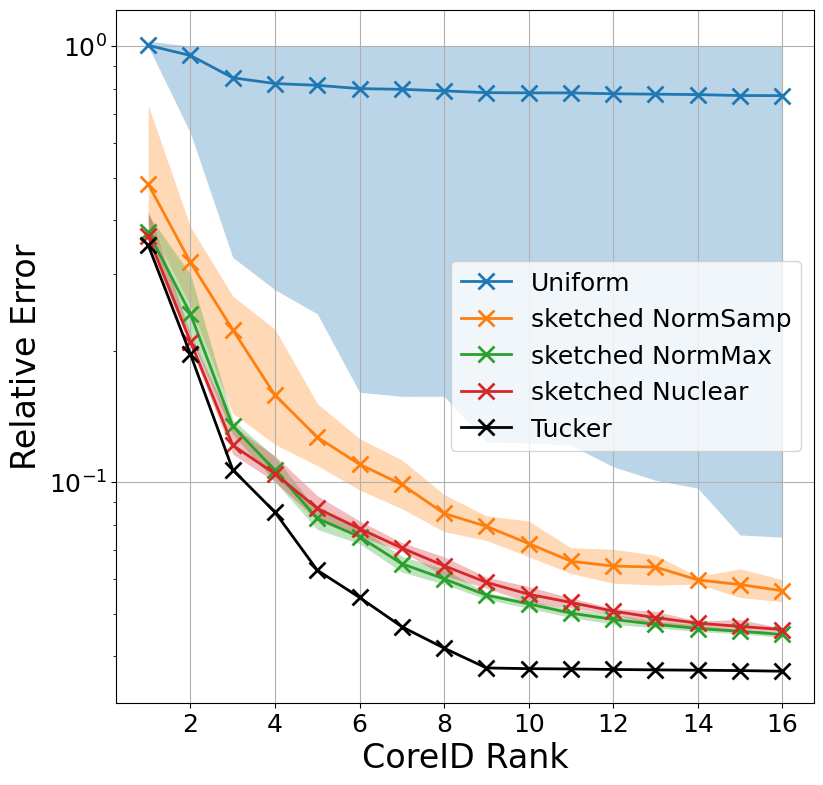}
    \end{minipage}%
    \begin{minipage}{.33\textwidth}
        \centering
        \includegraphics[width=0.96\linewidth]{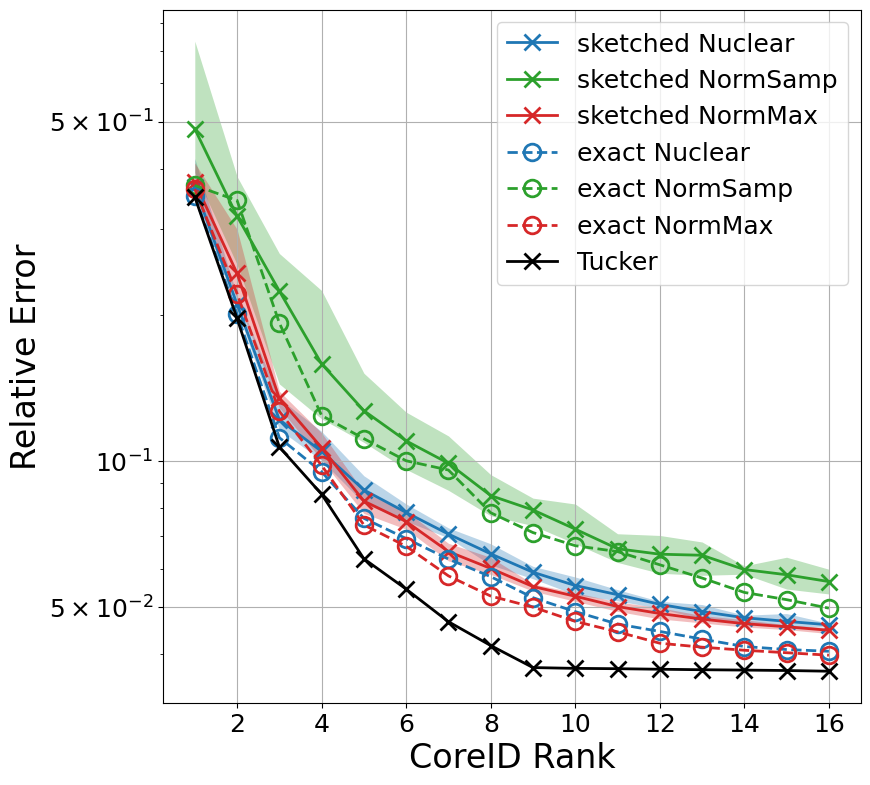}
    \end{minipage}%
    \begin{minipage}{.33\textwidth}
        \centering
        \includegraphics[width=0.96\linewidth]{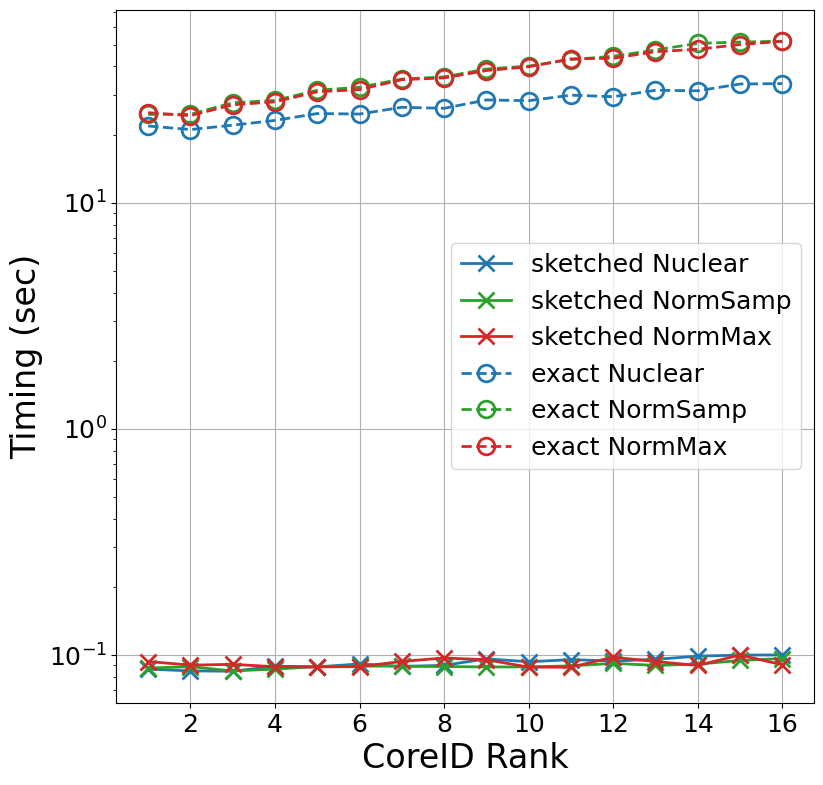}
    \end{minipage}
    \caption{
        \bf{Left:} Reconstruction error of the CP CoreID algorithm with sketching, including the average error (solid lines) and min-max errors (shaded region) for 4 matrix ID algorithms. On the bottom, the Tucker error is plotted as the optimal low rank approximation error.
        \bf{Middle:} Comparison of average reconstruction error for sketched and exact algorithms.
        \bf{Right:} Average computation times for sketched and exact algorithms.
    }
    \label{fig:cpcid}
\end{figure}

From the plot, we see that all QR-based matrix ID algorithms work significantly better than uniform sampling and achieve near-optimal reconstruction errors.
Among the three methods, norm maximization and nuclear maximization are quite similar in performance, and they tend to perform better than norm sampling.
In the middle panel (b), we see that the sketched algorithms are almost as accurate as the exact algorithms for all three matrix ID methods.
Meanwhile, from panel (c), we see that the sketched algorithms are more than 100 times faster than the exact algorithm on this problem and exhibit milder scaling in the approximation rank.

\subsubsection{SatID for CP Tensors}
Next we apply SatID to a CP tensor produced in the same way. We test both naive uniform sampling of the columns and the norm sampling algorithm using the marginalization trick (\cref{alg:sid} plus \cref{alg:sid_cp}).
Besides uniform, only norm sampling is tested; the marginalization trick is exclusive to norm sampling, which is necessary for CP SatID problems of this size.
We test both sketched and exact versions\footnote{For the exact version, $\S$ is identity in \cref{alg:sid_cp}, and the scores are computed by contracting CP tensors in the efficient way.} 
of marginalized norm sampling. 
Reconstruction errors and and timings are plotted in \cref{fig:cpsid}.
Again the Tucker errors are plotted as a reference.

\begin{figure}[!h]
    \centering
    \begin{minipage}{.45\textwidth}
        \centering
        \includegraphics[width=\linewidth]{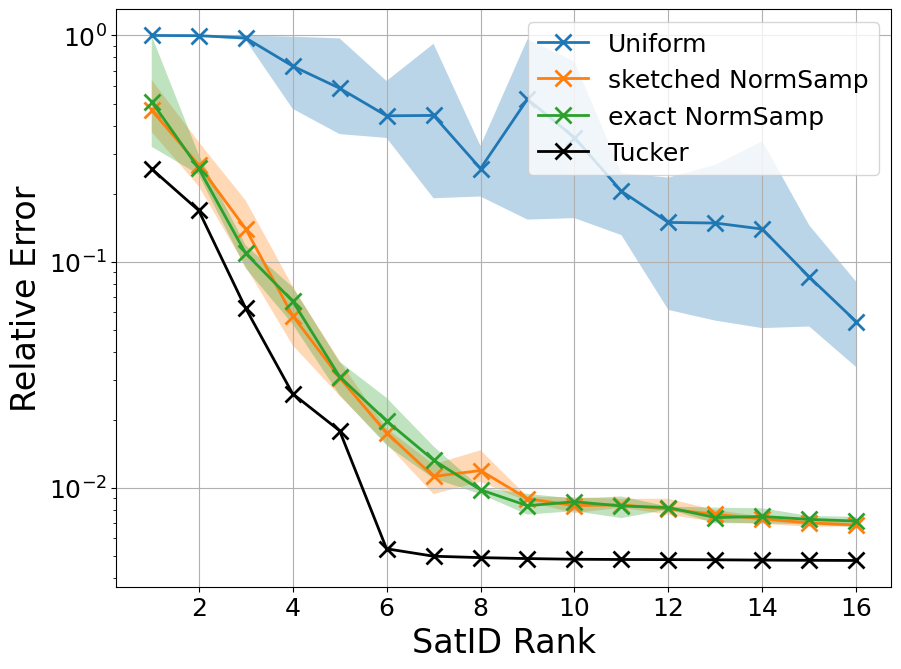}
    \end{minipage}%
    \begin{minipage}{.45\textwidth}
        \centering
        \includegraphics[width=\linewidth]{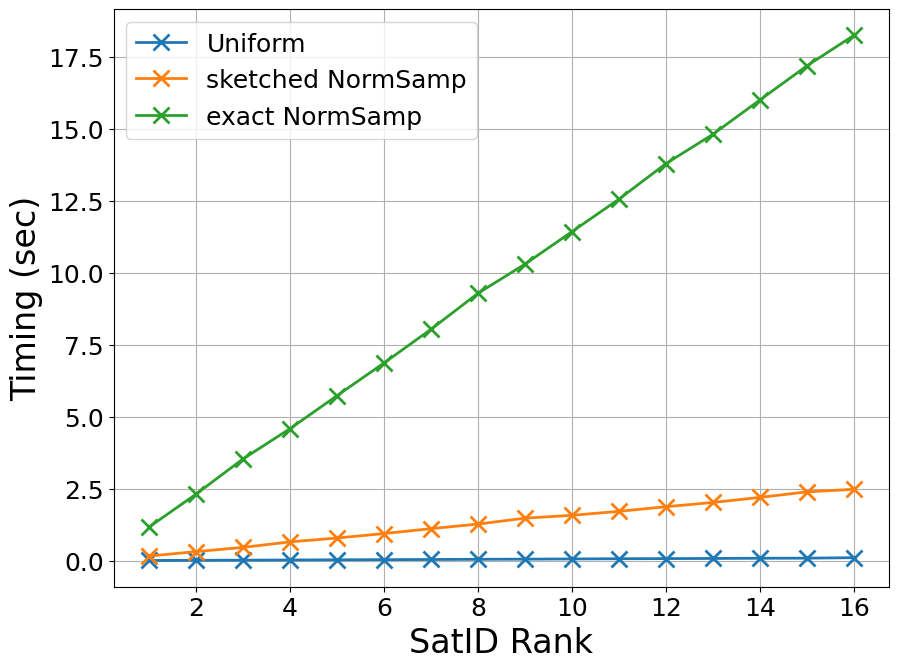}
    \end{minipage}
    \caption{
        \bf{Left:} Reconstruction accuracy of the CP SatID algorithm using marginalization and norm sampling with and without sketch, including the average error (solid lines) and min-max errors (shaded region). On the bottom, the Tucker error is plotted as the optimal low rank approximation error.
        \bf{Right:} Average computation times for sketched and exact algorithms, as well as the Tucker decomposition.
    }
    \label{fig:cpsid}
\end{figure}

Sketched and exact norm sampling are almost identical in terms of reconstruction accuracy and are both capable of finding a near-optimal decomposition.
On the right, we see that the sketched version is faster than the exact version, with milder scaling in approximation rank.
The acceleration is not as significant as in CoreID, possibly because the marginalization requires multiple sketch operators to be generated and applied. However, the asymptotic advantage is clear.

\subsection{Moment Tensors of Equilibrium RNA Base Pairing} \label{sec:rna}

Next we investigate the performance of our methods on fourth-order moment tensor data derived from statistical mechanical modeling of base pairing in a randomly generated RNA sequence of 128 nucleotides.
1,000,000 secondary structures were independently sampled proportional to their equilibrium probabilities in the thermodynamic Boltzmann ensemble \cite{fornace2020unified}.
Each structure was then translated into a 128-length bit vector based on whether each base was paired or unpaired, yielding a dataset $\V = (\v v_1|\ldots|\v v_p)$ of shape $128 \times 10^6$.
Finally, the (symmetric) fourth-order moment tensor was constructed in CP format ($n_1 = n_2 = n_3 = n_4 = 128$, $p = 10^6$) based on this dataset, modeling the second-order correlations between base pairs in the Boltzmann ensemble.
Data generation was carried out using the NUPACK 4.0.1 software package \cite{Fornace2022-nupack} with default RNA model settings (\texttt{rna06} parameters, 1 M Na$^+$, \texttt{stacking} ensemble).

We investigate both CoreID and SatID for this tensor data.
First, we apply the CoreID algorithm (\cref{alg:cid_cp}) with a sketch dimension of $m = 256$.
We test different matrix ID methods, including uniform sampling, norm sampling, norm maximization, and nuclear maximization.
For each ID method, 10 independent runs are carried out, and the reconstruction error is plotted against the approximation rank (\cref{fig:rna}, left). 
In addition to our CoreID using four different matrix ID methods, we also examine an alternative approach to compressing the data that only uses a matrix ID on the dataset to compress the 4th moment tensor.
Specifically, given the target rank $k$, this solves $(J, \X) \gets \textsc{MatrixID}(\v V, k)$ and then furnishes $J_i = J,~\X_i = \X$ for $i = 1,\ldots,4$ to compare with the CoreID result.
We used nuclear maximization algorithm as the matrix ID algorithm in this matrix-only approach, and the reconstruction error for the moment tensor is also plotted in the same figure.
In \cref{fig:rna} (right), we plot the reconstruction errors of CP SatID (\cref{alg:sid} plus \cref{alg:sid_cp}).
The experiments were repeated 5 times.
Again, the only tractable options here are to use naive uniform sampling or sketched norm sampling via marginalization.

\begin{figure}[!h]
    \centering
    \begin{minipage}{.45\textwidth}
        \centering
        \includegraphics[width=\linewidth]{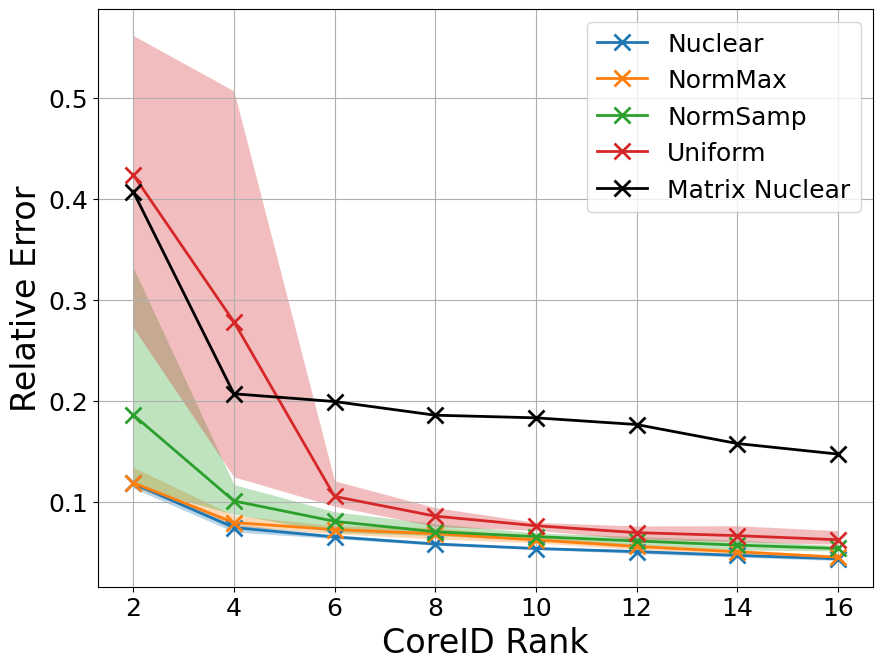}
    \end{minipage}%
    \hspace*{0.03\textwidth}
    \begin{minipage}{.45\textwidth}
        \centering
        \includegraphics[width=\linewidth]{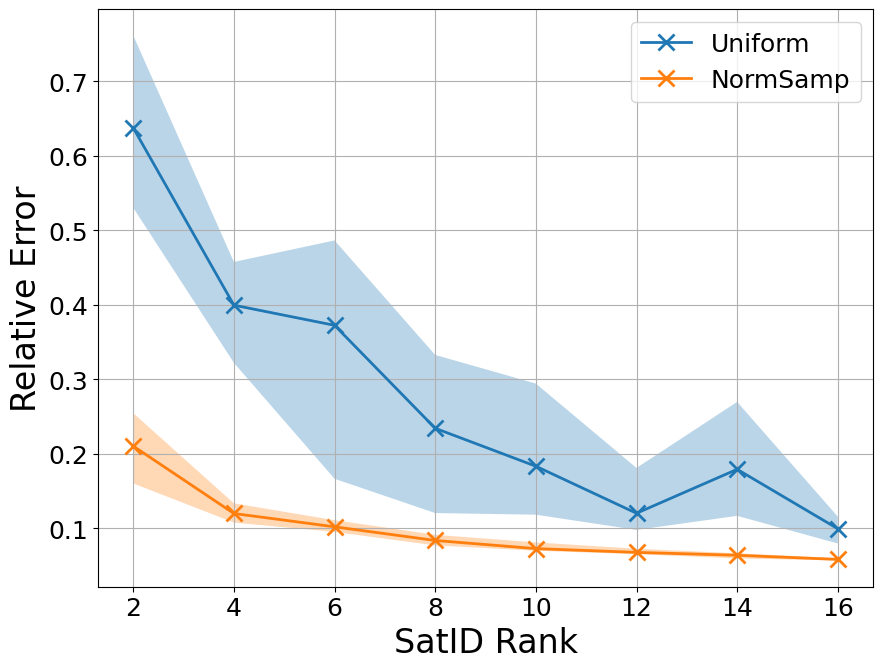}
    \end{minipage}
    \caption{
        Error plot for using CoreID and SatID to approximate the 4th moment tensor of the RNA data.
        The shaded region corresponds to maximum and minimum errors, and the solid line corresponds to the mean error over repeated runs.
        \bf{Left:} CoreID errors, comparing CoreID errors using sketch and 4 different matrix ID algorithms, as well as the reconstruction error when only matrix ID is used (black). 10 repeated runs are carried out.
        \bf{Right:} SatID errors using sketched norm sampling and marginalization. 5 repeated runs are carried out.
    }
    \label{fig:rna}
\end{figure}

From the plot, 
we clearly see that the tensor based CoreID method 
is superior to the matrix-only approach as an approach for producing a Tucker factorization.
This result is intuitive because the matrix-only approach is not directly optimizing the right quantity (the Frobenius error in the fourth moment tensor).
Moreover, the matrix-only approach can also be viewed as an independent approach, and this demonstrates the benefit of using a sequential adaptive approach in CoreID.
The matrix-only approach is even worse than the tensor CoreID method using naive uniform sampling, which also benefits from the adaptive sequential approach in which the satellite nodes $\X_i$ are determined sequentially.

Comparing the tensor CoreID methods, especially in the low-rank regime, nuclear maximization and norm maximization have the best reconstruction errors and are superior to norm sampling in terms of error and consistency over repeated runs.
All QR-based methods perform much better than naive uniform sampling.
The latter requires a larger subset of columns in order to deliver the same relative error.

\subsection{Experiments on Real World Sparse Tensor Data} \label{sec:exp_sparse}

In this section, we test our CoreID and SatID algorithms on sparse tensor data.
In order to compare to the results in \cite{minster2020randomized},
we use two real-world tensor data from the FROSTT \cite{frosttdataset} dataset: the Nell-2 tensor~\cite{carlson2010toward} and the Enron tensor~\cite{shetty2004enron}.
The original shape and number of nonzeros of these tensors, as well as other compressed versions of these tensors used in this experiment, are summarized in \cref{tab:sparse}.

\begin{table}[!h]
    \centering
    \begin{tabular}{l|r|r}
    \hline
    Tensor                 & Shape                      & nnz        \\ \hline
    Nell-2                 & (12092, 9184, 28818)       & 76,879,419 \\
    Nell-2-large           & (2419, 1837, 5764)         & 775,274    \\
    Nell-2-compressed      & (807, 613, 1922)           & 19,841     \\
    Enron                  & (6066, 5699, 244268, 1176) & 54,202,099 \\
    Enron-large            & (1517, 1425, 24427, 589)   & 227,284    \\
    Enron-large-contracted & (3034, 2850, 61068)        & 2,004,366  \\
    Enron-compressed       & (405, 380, 9771)           & 6,131      \\ \hline
    \end{tabular}
    \caption{Shapes and numbers of nonzeros (nnz) for sparse tensors used in our experiments.}
    \label{tab:sparse}
\end{table}

\subsubsection{CoreID for Sparse Tensors} \label{sec:exp_spcid}

First we compare \cref{alg:cid_sparse} to another sRRQR based CoreID algorithm SP-STHOSVD in \cite{minster2020randomized} on tensors Nell-2-compressed and Enron-compressed.
These are tensors obtained by subsampling over each dimension, and for Enron, the last mode is contracted (summed over).
We follow the exact preprocessing procedure in \cite{minster2020randomized} to compress the original tensors (cf.~\cite{minster2020randomized} for further detail).
We use a sketch dimension $m_1 = 400$ for FJLT, $m_2 = 400$ for KFJLT, and $m_3 = 2000$ for count sketch, for both the Nell-2-compressed tensor and the Enron-compressed tensor\footnote{Reconstruction errors were robust to minor changes in sketch dimensions relative to these choices.} (see \cref{fig:sparse_cid}).
Following \cite{minster2020randomized}, we process the modes in order $(3, 1, 2)$.
The algorithms are repeated 10 times on each problem.
The reconstruction errors and timing results for Enron-compressed and Nell-2-compressed are plotted respectively in \cref{fig:spcid_saib_enron} and \cref{fig:spcid_saib_nell}.
For completeness, we also included the result of R-STHOSVD algorithm in \cite{minster2020randomized}, which uses randomized SVD and sequential truncation to compute a Tucker approximation.

\begin{figure}[!h]
    \centering
    \begin{minipage}{.45\textwidth}
        \centering
        \includegraphics[width=\linewidth]{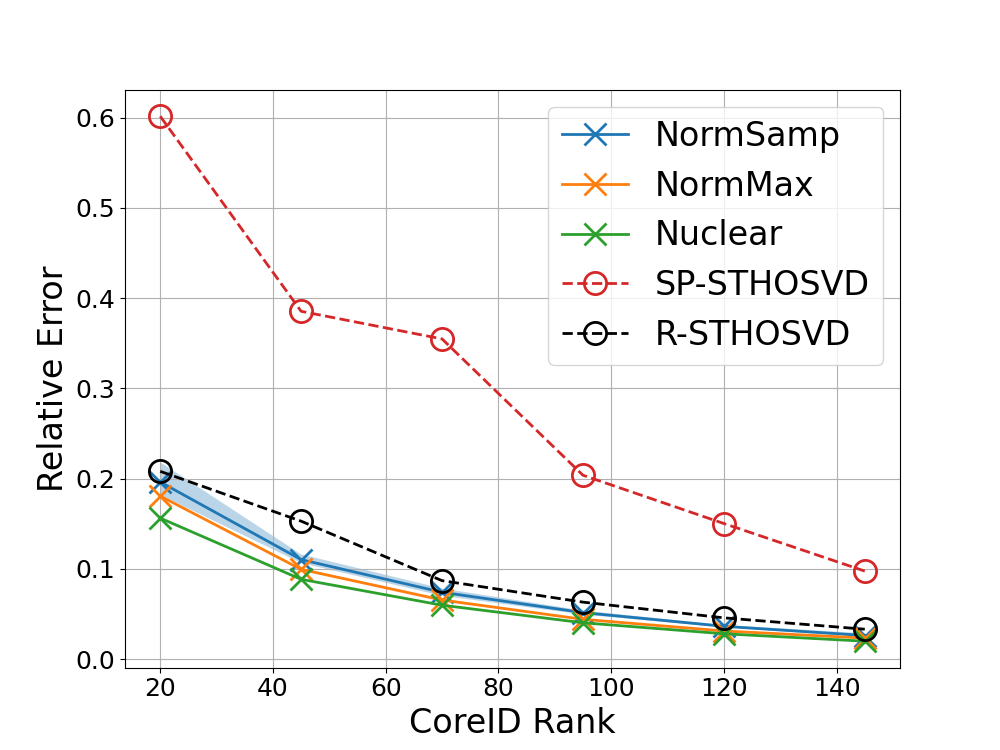}
    \end{minipage}%
    \begin{minipage}{.45\textwidth}
        \centering
        \includegraphics[width=\linewidth]{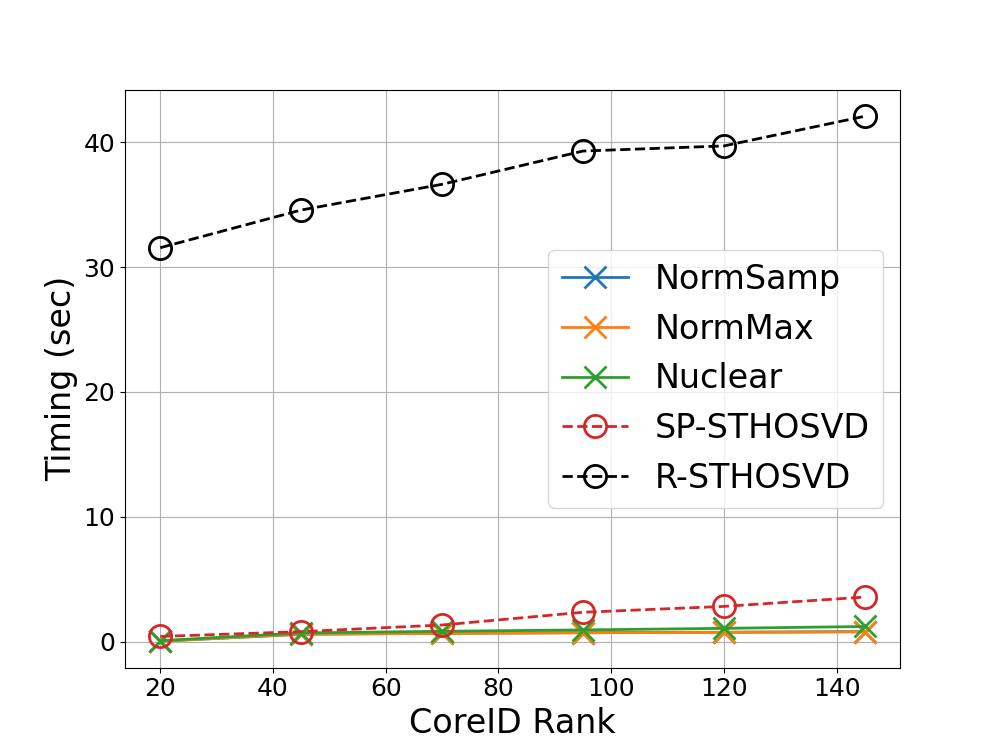}
    \end{minipage}
    \caption{
        \bf{Left:} Reconstruction accuracy of the sparse CoreID algorithm (\cref{alg:cid_sparse}) on the Enron-compressed tensor, including the average error (solid lines) and min-max errors (shaded region). The accuracy reported in \cite{minster2020randomized} of SP-STHOSVD and R-STHOSVD on the same tensor are also included.
        \bf{Right:} Computation times of \cref{alg:cid_sparse} on the Enron-compressed tensor. Only the average time is plotted. The timings of SP-STHOSVD and R-STHOSVD reported in \cite{minster2020randomized} are also included.
    }
    \label{fig:spcid_saib_enron}
\end{figure}

\begin{figure}[!h]
    \centering
    \begin{minipage}{.45\textwidth}
        \centering
        \includegraphics[width=\linewidth]{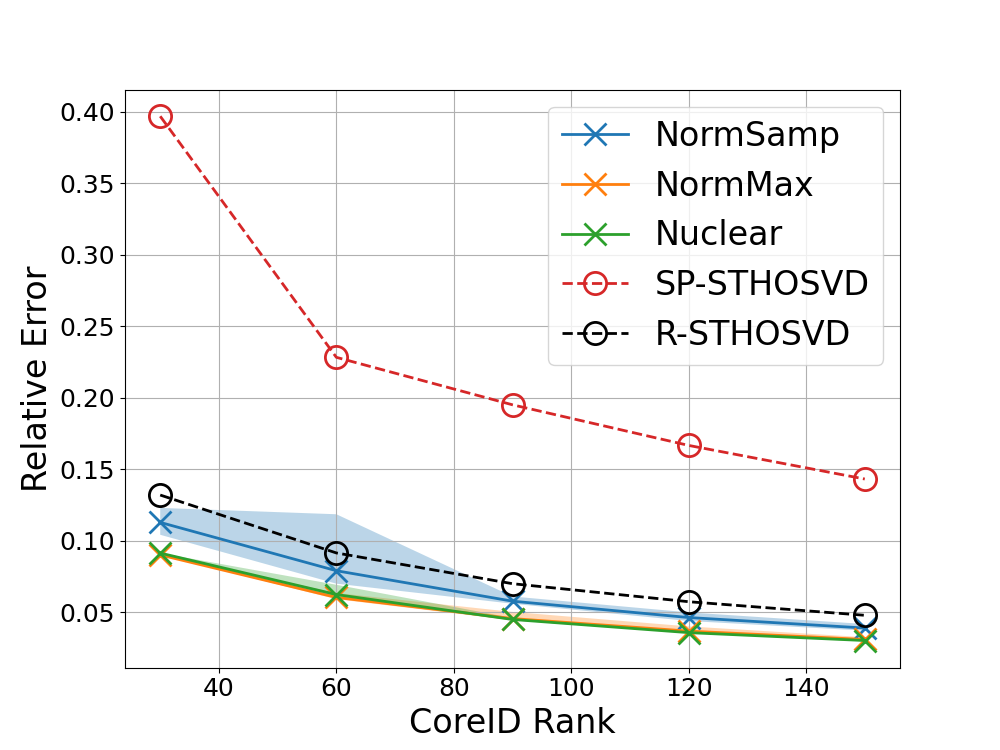}
    \end{minipage}%
    \begin{minipage}{.45\textwidth}
        \centering
        \includegraphics[width=\linewidth]{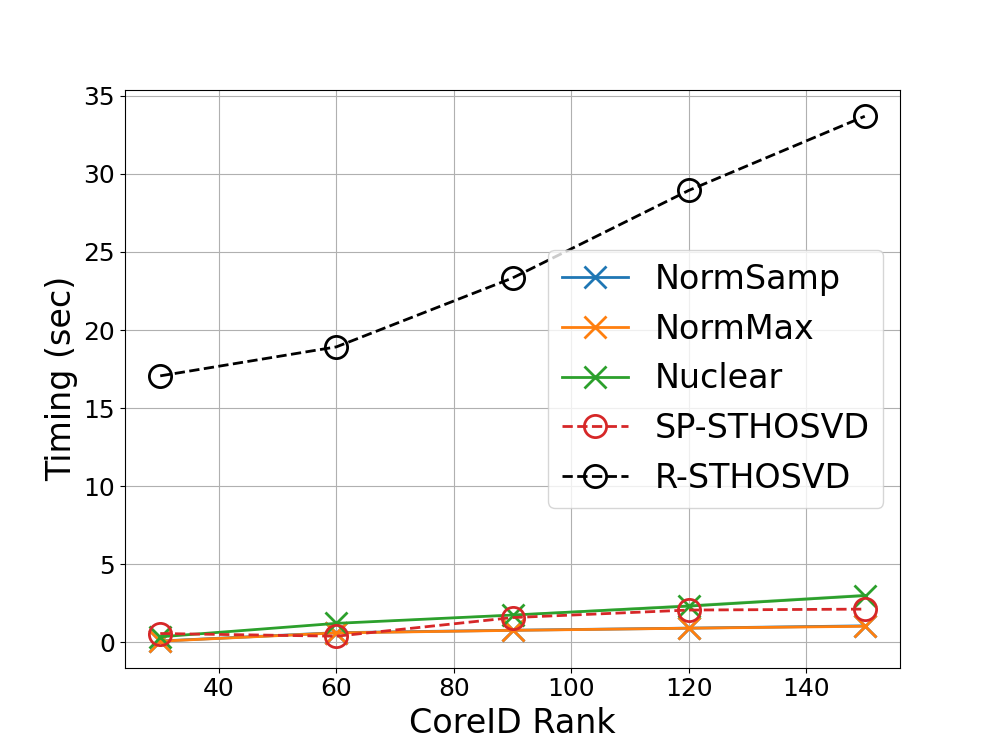}
    \end{minipage}
    \caption{
        Same as \cref{fig:spcid_saib_enron}, but the target is the Nell-2-compressed tensor.
    }
    \label{fig:spcid_saib_nell}
\end{figure}

In the timing plots, the curves for norm maximization and norm sampling are mostly identical since the complexity of the two algorithms are almost the same, making them hard to distinguish in the figures.
We only run the algorithms up to rank 145 for Enron and 150 for Nell-2.
This is because in Enron-compressed, the $\mat{1, 23}$ flattening only has 162 nonzero rows, such that the Tucker rank of the tensor is at most 162.
Similarly, for Nell-2, the $\mat{12, 3}$ flattening has only 176 nonzero columns, such that the Tucker rank of the tensor is at most 176.
From the plots, it is clear that our CoreID algorithm (\cref{alg:cid_sparse}) yields a much lower reconstruction error than SP-STHOSVD, even lower than the randomized Tucker (R-STHOSVD). 
This accords well with our analysis showing that the adaptive sequential approach provides better guarantees compared to the simple sequential approach (see \cref{sec:cid_from_matrix}), which is bounded to sRRQR and a shape-dependent error bound.
With efficient structured sketch methods rather than unstructured Gaussians (as in \cite{minster2020randomized}), \cref{alg:cid_sparse} is also faster than SP-STHOSVD, and much faster than R-STHOSVD (though measured timings may depend on many other factors e.g., the hardware).
Comparing three QR-based matrix ID methods (norm sampling, nuclear maximization, and norm maximization), we see in both experiments that nuclear maximization and norm maximization are slightly better than norm sampling in almost all runs, and they come with a better consistency in errors.
This coincides with our previous observations.

Via efficient sketching, we are able to compute CoreID of larger, less compressed variants of the Enron and Nell tensors than was feasible in \cite{minster2020randomized}.
Specifically, we test sparse CoreID on two larger tensors, Enron-large and Nell-2-large.
Enron-large is obtained by subsampling the Enron tensor every (4, 4, 10, 2) elements in each direction, resulting in a 4-way tensor, and Nell-2-large is obtained by subsampling the Nell-2 tensor every (5, 5, 5) elements in each direction.

Specifically, we use a $m_1 = 1000$ dimensional count sketch for the outer sketch matrix $\S_1$ to make the computation more efficient than using an FJLT operator as before.
Besides using sketching in the CoreID process, we also use a randomized method to efficiently estimate the reconstruction error, which is the distance between a sparse tensor and a low rank Tucker with a sparse core.
This involves an efficient algorithm to apply the KFJLT operator to a sparse tensor. 
We sketch the original sparse tensor and the reconstructed tensor to 200-dimensional vectors, which is sufficient to estimate the distance between two tensors.
In our experiments, the error was not sensitive to changes to this sketch dimension.

In the experiment, each decomposition is repeated for 5 runs, and the results are plotted in \cref{fig:spcid_large}.
Note that there are larger uncertainties in the error over repeated runs, which is expected since the reconstruction error is randomly estimated in itself.
We observe that the nuclear maximization gives better reconstruction accuracy than the alternatives. 

\begin{figure}[!h]
    \centering
    \begin{minipage}{.45\textwidth}
        \centering
        \includegraphics[width=\linewidth]{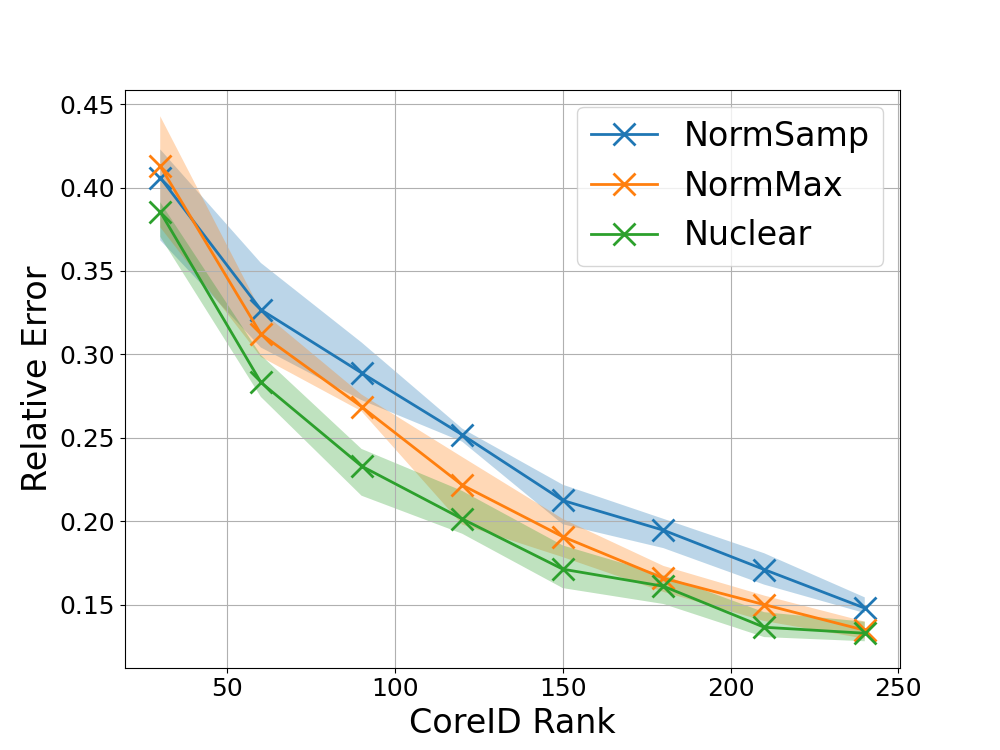}
    \end{minipage}%
    \begin{minipage}{.45\textwidth}
        \centering
        \includegraphics[width=\linewidth]{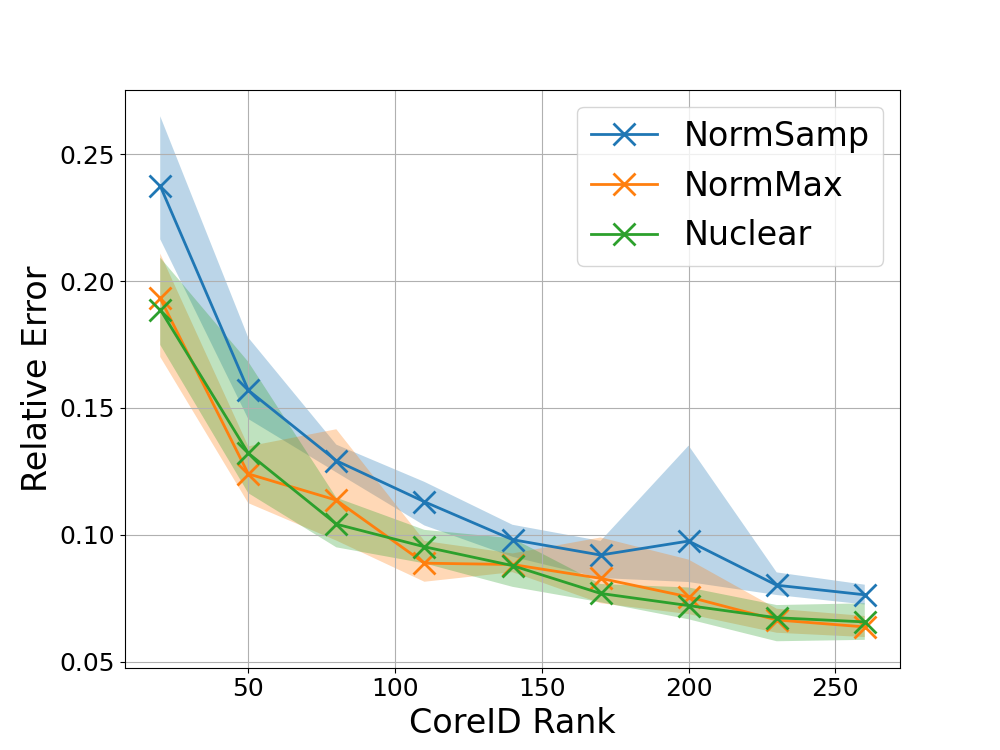}
    \end{minipage}
    \caption{
        CoreID reconstruction errors on Enron-large and Nell-2-large using \cref{alg:cid_sparse}. 
        The solid line and shaded area represent the average error and min-max error over the 5 runs. A sketch dimension of 200 is used to estimate the reconstruction error.
        \bf{Left:} Enron-large, \bf{Right:} Nell-2-large. 
    }
    \label{fig:spcid_large}
\end{figure}

Although our algorithms can efficiently handle the original uncompressed tensors Enron and Nell-2, the reason for subsampling is twofold.
First, as pointed out in \cite{minster2020randomized}, the most challenging and time-consuming part of these larger problems is not performing the CoreID itself, but estimating the reconstruction error.
On the original tensors it takes too much time and memory to compute this error, even with the randomized estimation.
Second, the original tensors seem not to be nearly low-rank, and therefore a high-rank decomposition is required, which makes the error estimation more costly.

\subsubsection{SatID for Sparse Tensors}
Finally, we test the direct method \cref{alg:sid_sparse_direct} on SatID for sparse tensors.
(Sparse SatID via \cref{alg:sid_sparse_margin} is not used here, as sparse SatID via \cref{alg:sid_sparse_direct} is feasible for this data.)
The two tensors used here are Enron-large-contracted and Nell-2-large.
Nell-2-large is the same tensor used in the previous experiment, and Enron-large-contracted is obtained by subsampling Enron tensor every (2, 2, 4) elements from the first 3 modes, and the last mode is contracted after the subsampling. 
The reason for this preprocessing is again to reduce the tensor order and make the tensor closer to low-rank; otherwise storing the dense core of size $k^4$ is infeasible memory-wise on our machine when $k\sim 200$.

The core tensor and the reconstruction error are computed using an efficient implementation of the tensor-times-matrix (ttm) algorithm for sparse tensors.
The results for the Enron and Nell-2 tensors are shown in \cref{fig:spsid}.
Note that \cref{alg:sid_sparse_direct} supports norm maximization and norm sampling, but not nuclear maximization, so we only compare these two algorithms.
Computation is repeated over 5 trials for norm sampling.

\begin{figure}[!h]
    \centering
    \begin{minipage}{.45\textwidth}
        \centering
        \includegraphics[width=\linewidth]{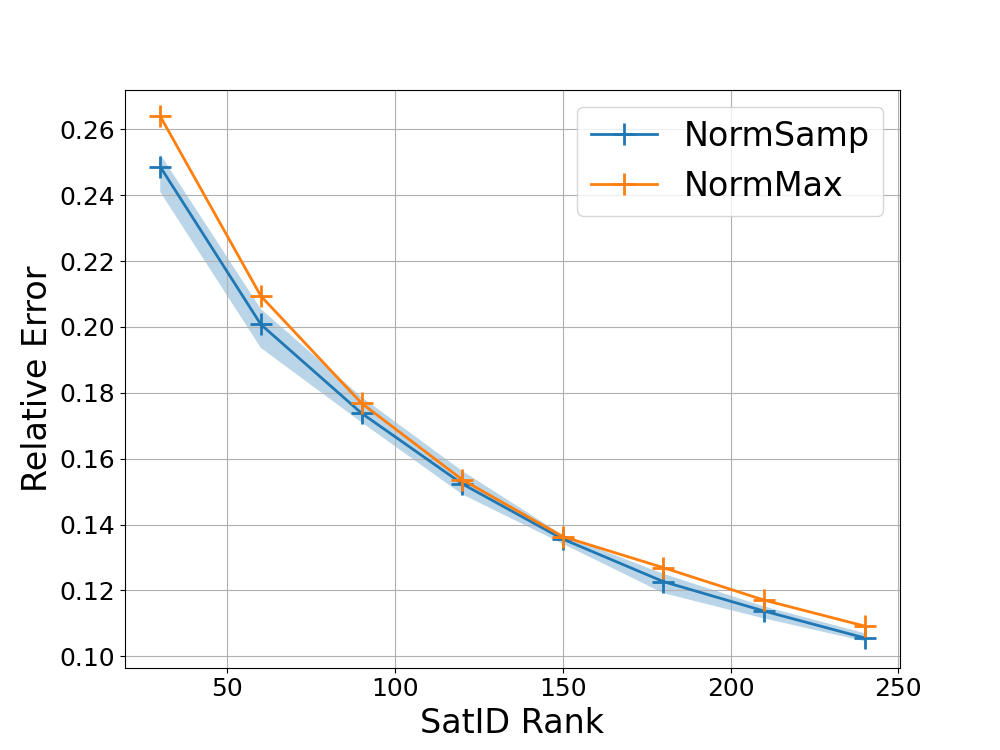}
    \end{minipage}%
    \begin{minipage}{.45\textwidth}
        \centering
        \includegraphics[width=\linewidth]{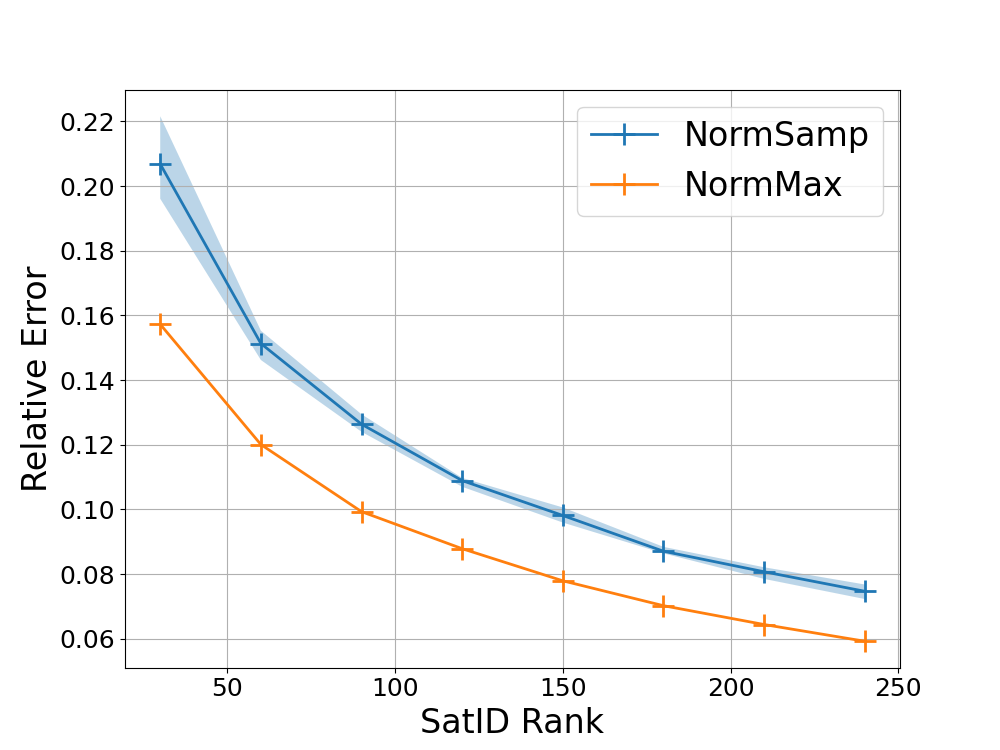}
    \end{minipage}
    \caption{
        SatID reconstruction errors on Enron-large-contracted and Nell-2-large using \cref{alg:sid_sparse_direct}. 
        \bf{Left:} Enron-large-contracted, \bf{Right:} Nell-2-large. 
        }
        \label{fig:spsid}
    \end{figure}
    
The norm sampling algorithm appears to be slightly better than norm maximization on the Enron tensor, but it is worse on the Nell tensor.
Compared to CoreID, SatID is usually more expensive memory-wise due to the dense core, and thus it is difficult to apply to higher order tensors, as in the Enron case.

\subsection{Computational Details} \label{sec:compenv}

All programs were implemented in Python\footnote{Version 3.12.2, \url{https://www.python.org/}} using standard packages, mostly based on numpy\footnote{Version 1.26.4, \url{https://numpy.org/}}, scipy\footnote{Version 1.12.0, \url{https://scipy.org/}}, and cupy\footnote{Version 13.2.0, \url{https://cupy.dev/}}.
All timed computations were carried out on a personal machine with a Core i7-10700K CPU and 32 GB RAM. 
Unconstrained Tucker decompositions were performed using solvers from Python package tensorly\footnote{Version 0.8.1, \url{https://tensorly.org/stable/index.html}}
\cite{kossaifi2019tensorly}.
A software package containing the developed algorithms and relevant demos is available \href{https://github.com/yifan8/TensorID}{online}.

\section{Conclusion} \label{sec:conclusion}

In this paper we considered two types of interpolative tensor decomposition problems, the core interpolative decomposition \eqref{eq:cid_def} (CoreID) and the satellite interpolative decomposition \eqref{eq:sid_def} (SatID).

To solve for CoreID, we introduced two methods to improve the accuracy and scalability, respectively.
For accuracy, we proposed a new adaptive sequential approach and proved that this approach leads to a dimension-independent error bound, which improves upon the $\Omega(\sqrt{N})$ error bound in \cite{minster2020randomized}, $N$ being the number of elements in the tensor.
To enhance scalability, we introduced sketching into the matrix ID subroutines in CoreID, and provided relevant theoretical guarantees.
We specialized these ideas to CP and sparse tensors and proposed new algorithms (\cref{alg:cid_cp} and \cref{alg:cid_sparse}).
When tested on synthetic and real-world data, these algorithms performed well.
Remarkable improvements in accuracy and timing was observed when compared to case studies in \cite{minster2020randomized}. 

For the SatID problem, we introduced a marginalization trick to prevent forming $n^{d-1}$ scores in the matrix ID subroutine,
and we proposed combining this marginalization trick with sketching methods to improve scalability.
We implemented these ideas and proposed \cref{alg:sid_cp} for structure-exploiting SatID on CP tensors.
This algorithm performed well in numerical experiments.
For sparse tensors, the direct algorithm \cref{alg:sid_sparse_direct} takes $\cO(k \nnz)$ time per mode, where $k$ is the target rank.
This is often acceptable, and the algorithm performed well on real-world datasets.
For higher-order sparse tensors, we used the marginalization trick and sketching to propose an $\cO(\nnz)$ time algorithm (\cref{alg:sid_sparse_margin}) under mild assumptions.

\vspace*{2ex}
\noindent
\bf{Acknowledgments:} This work was supported in part by the Applied Mathematics Competitive Portfolios
program (M.F. and M.L.) and the Center of Advanced
Mathematics for Energy Research Applications (Y.Z.) funded by the U.S. Department of Energy's Office of Advanced
Scientific Computing Research and Basic Energy Sciences
under Contract No. DE-AC02-05CH11231.
M.L. was supported in part by a Sloan Research Fellowship.
Y.Z. was also supported in part by NSF DMS 2309782, DE SC0025312, and University of Texas at Austin Graduate School Continuing Fellowship. This work was partially done during an internship of Y.Z. at Lawrence Berkeley National Laboratory.

\vspace*{2ex}
\printbibliography

\newpage
\appendix
\section{Additional proofs and details for core interpolative decomposition} \label{app:cid}

\subsection{A pessimistic counterexample for independent CoreID} \label{app:bad_indep}

In this section, we give an explicit example where using independent selection in CoreID can result in an arbitrarily bad error.
Nuclear maximization is used for this example, but similar examples can be constructed for other matrix ID methods. 
(Note that rank-1 nuclear maximization is Frobenius-optimal for the rank-1 matrix ID considered in this scenario.)

\begin{proposition}
    \label{prop:counterex_indep_cid}
    Let $M, n\gg 1$. Consider matrix
    \begin{equation*}
    \m A=\begin{pmatrix}
        1 & 1 & 1 & 1 & \cdots & 1 \\
        1 & 0 & M & M & \cdots & M \\
        1 & M & 0 & M & \cdots & M \\
        1 & M & M & 0 & \cdots & M \\
        \vdots & \vdots & \vdots & \vdots & \ddots & \vdots \\
        1 & M & M & M & \cdots & 0
    \end{pmatrix}_{n \times n}.
    \end{equation*}
    Let $J_1$ and $J_2$ be respectively the selected indices of rank-1 nuclear maximization on columns and rows of $\A$.
    Let $\X_1 = \argmin_{\X}\|\A - \A_{:, J_1}\X\|$ and $\X_2 = \argmin_{\X}\|\A - \X\A_{J_2, :}\|$ be the optimal satellite nodes. 
    Denote $\delta_1 = \|\A - \A_{:, J_1}\X_1\|$ and $\delta_2 = \|\A - \X_2\A_{J_2, :}\|$ the matrix ID errors.
    Then the final CoreID error is
    \begin{equation*}
        \|\A - \X_2\A_{J_2, J_1}\X_1\| = \Omega(M)\|\A\| = \Omega(\sqrt{n}M)(\delta_1 + \delta_2).
    \end{equation*}
\end{proposition}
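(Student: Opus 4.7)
The plan is to compute both the matrix ID errors and the reconstructed CoreID error explicitly, exploiting the block structure of $\m A$, and verify that the latter is larger by a factor of $\Omega(\sqrt{n}M)$. The starting point is the observation that rank-$1$ nuclear maximization selects the column $i^* = \argmax_i \,\|\m A^\top \m A_{:,i}\|^2 / \|\m A_{:,i}\|^2$, i.e., the column $\m A_{:,i}$ whose orthogonal projector captures the most Frobenius mass of $\m A$. By the symmetry of $\m A$, the row selection will mirror the column selection, so it suffices to identify $J_1$.

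First I would compute the two score classes. For $i = 1$ the column is the all-ones vector, with $\|\m A_{:,1}\|^2 = n$ and $(\m A^\top \m A_{:,1})_k$ equal to the $k$-th column sum of $\m A$, namely $n$ at $k=1$ and $1 + (n-2)M$ elsewhere. For $i \geq 2$ a short computation gives $\|\m A_{:,i}\|^2 = 1 + (n-2)M^2$ and $(\m A^\top \m A_{:,i})_k \in \{1+(n-2)M,\, 1+(n-2)M^2,\, 1+(n-3)M^2\}$ depending on whether $k$ equals $1$, $i$, or neither. Expanding both scores and keeping only leading $M$-powers shows the Case $1$ score is $(n-2)^2 M^2 + O(nM)$ while the Case $2$ score is $[(n-3)^2 + (n-2)] M^2 + O(M)$; an elementary inequality shows the former is strictly larger for $n$ large, so $J_1 = J_2 = \{1\}$.

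Next I would write down $\m X_1$ and $\m X_2$ explicitly as least-squares solutions against the all-ones vector: $\m X_1 = \m A_{:,1}^\top \m A / \|\m A_{:,1}\|^2$ has first entry $1$ and remaining entries $v := (1+(n-2)M)/n$, and by symmetry $\m X_2 = \m X_1^\top$. Since $\m A_{J_2,J_1} = \m A_{1,1} = 1$, the CoreID reconstruction is the rank-one matrix $\m X_2 \m X_1$, whose $(i,j)$ entries are $1$, $v$, or $v^2$ according to whether none, one, or both of $i,j$ are $\geq 2$. I would then compute $\|\m A - \m X_2 \m X_1\|^2$ by summing over four index blocks; the dominant contribution comes from the $(n-1)(n-2)$ off-diagonal entries in the bottom-right block, each contributing $(v^2 - M)^2 = \Theta(M^4)$, giving $\|\m A - \m X_2 \m X_1\|^2 = \Theta(n^2 M^4)$.

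Finally I would compare to the two reference quantities. Direct summation yields $\|\m A\|^2 = 1 + 2(n-1) + (n-1)(n-2)M^2 = \Theta(n^2 M^2)$, so the reconstruction error is $\Omega(M)\|\m A\|$ as claimed. For the matrix ID errors, the identity $\delta_1^2 = \|\m A\|^2 - \|\m A^\top \m A_{:,1}\|^2/\|\m A_{:,1}\|^2$ together with the Case $1$ score above gives $\delta_1^2 = (n-1)(n-2)M^2 \cdot (2/n) + O(nM) = \Theta(n M^2)$, so $\delta_1 = \Theta(\sqrt{n}M)$, and $\delta_2$ is identical by symmetry. Dividing, $\|\m A - \m X_2 \m A_{J_2,J_1} \m X_1\|/(\delta_1+\delta_2) = \Theta(nM^2/\sqrt{n}M) = \Theta(\sqrt{n}M)$, completing the claim. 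The main obstacle is the asymptotic comparison of the two nuclear scores in step 1, since both have the same $\Theta(n^2 M^2)$ leading scale and the sign of the comparison depends on the coefficient of $M^2$; the rest is bookkeeping on a handful of index blocks.
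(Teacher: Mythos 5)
Your proposal is correct and follows essentially the same route as the paper's proof: compare the two nuclear scores to conclude $J_1=J_2=\{1\}$, form the rank-one reconstruction $\X_2\A_{1,1}\X_1$ with off-diagonal entries $v^2$ where $v=(1+(n-2)M)/n$, and compare the resulting $\Theta(n^2M^4)$ error against $\|\A\|^2=\Theta(n^2M^2)$ and $\delta_1^2=\delta_2^2=\Theta(nM^2)$. One small caution on the step you rightly flag as delicate: the leading $M^2$ coefficient of the Case-1 score is $\tfrac{(n-1)(n-2)^2}{n}=n^2-5n+8-\tfrac{4}{n}$, not $(n-2)^2$, so its margin over the Case-2 coefficient $n^2-5n+7$ is only $1-\tfrac{4}{n}$ rather than $n-3$, and the comparison genuinely requires $M^2\gtrsim n$ to absorb the $\cO(n)$ lower-order terms (consistent with the paper's own computation).
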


\begin{proof}
    First we show that the nuclear maximization will pick the first column and row
    \begin{equation*}
        \m K=\m A^{\top} \m A=
            \begin{pmatrix}
            n & 1+(n-2) M & 1+(n-2)M & \cdots & 1+(n-2) M \\
            (n-2) M+1 & 1+(n-2) M^2 & 1+(n-3)M^2 &\cdots & 1+(n-3) M^2 \\
            (n-2) M+1 & 1+(n-3) M^2 & 1+(n-2)M &\cdots & 1+(n-3) M^2 \\
            \vdots & \vdots & \vdots & \ddots& \vdots \\
            (n-2) M+1 & 1+(n-3) M^2 & 1+(n-3) M^2 & \cdots & 1+(n-2) M^2
            \end{pmatrix}
    \end{equation*}
    The score of the first column is (for $n, M$ not trivially small)
    \begin{equation*}
        S_1=\frac{n^2+(n-1)(1+(n-2) M)^2}{n}=n+\frac{n-1}{n}(1+(n-2) M)^2>\frac{(n-1)(n-2)^2}{n} M^2
    \end{equation*}
    The scores of the other columns are identically
    \begin{equation*}
        \begin{aligned}
            S_2 & =\frac{(1+(n-2) M)^2+\left(1+(n-2) M^2\right)^2+(n-2)\left(1+(n-3) M^2\right)^2}{1+(n-2) M^2}\\
            & =\cO(n)+(n-2) M^2+(n-3)^2 M^2 \\
            & =\cO(n)+\left(n^2-5 n+7\right) M^2<\frac{(n-1)(n-2)^2}{n} M^2<S_1
        \end{aligned}
    \end{equation*}
    Thus by symmetry, nuclear maximization will take the first row and column.
    Then, we compute $\delta_2$, the column approximation error. Straightforwardly, the residual matrix is
    \begin{equation*}
    \A-\wh{\A}=
        \begin{pmatrix}
            0 & 1-\frac{1}{n}((n-2) M+1) & 1-\frac{1}{n}((n-2) M+1) & \cdots & 1-\frac{1}{n}((n-2) M+1) \\
            0 & -\frac{1}{n}((n-2) M+1) & \frac{2}{n} M-\frac{1}{n} & \cdots & \frac{2}{n} M-\frac{1}{n} \\
            0 & \frac{2}{n} M-\frac{1}{n} & -\frac{1}{n}((n-2) M+1) & \cdots & \frac{2}{n} M-\frac{1}{n} \\
            \vdots & \vdots & \vdots & \ddots & \vdots \\
            0 & \frac{2}{n} M-\frac{1}{n} & \frac{2}{n} M-\frac{1}{n} & \cdots & -\frac{1}{n}((n-2) M+1)
        \end{pmatrix}
    \end{equation*}
    So $\delta_2^2=\|\A-\wh{\A}\|_F^2<(2 n+4) M^2$. By symmetry, for $n \gg 1$ we have
    \begin{equation*}
        \delta_1+\delta_2<3 \sqrt{n} M.
    \end{equation*}
    Finally, we compute the total CoreID error $\Delta = \|\A - \X_2\A_{J_2, J_1}\X_1\|$. 
    Let $\theta=\frac{1}{n}((n-2) M+1)$. then the approximation matrix is $\wt{\A}=(1, \theta \ldots \theta)^{\top}(1, \theta, \ldots, \theta)$, so
    \begin{equation*}
    \begin{aligned}
    \Delta^2 
    &=
    \|\A-\wt{\A}\|_F^2 = 2(n-1)(1-\theta)^2+(n-1)\left(\theta^2\right)^2+(n-1)(n-2)\left(M-\theta^2\right)^2 \\
    & >(n-1)(n-2)\left(M-\theta^2\right)^2 \\
    & =(n-1)(n-2)\left[\frac{(m-2) M+1)^2}{n^2}-M\right]^2 \\
    & >\frac{1}{2} n^2 \cdot\left(\frac{1}{2} M^2\right)^2 \quad (\text {as } n, M \rightarrow+\infty)
    \end{aligned}
    \end{equation*}
    Thus $\Delta = \Omega\left(n M^2\right) = \|\A\|_F \Omega(M)=\left(\delta_1\left(J_1\right)+\delta_2\left(J_2\right)\right) \Omega(\sqrt{n} M)$
    This confirms that we can make $\Delta /\left(\delta_1+\delta_2\right)$ arbitrarily large when sending $n$ or $M$ to $\infty$.
\end{proof}

\subsection{Proof of \cref{thm:err_cid_seq}}\label{app:cid_err_bnd}

\ciderrorbnd*

\begin{proof}
    First we focus on the case where matrix ID is $\Phi$-accurate.
    Denote $\t T_1 = \t T$, and for $i \geq 2$ denote $\t T_i$ the reconstructed tensor using $J_1,\ldots,J_{i-1}$ and $\X_1,\ldots,\X_{i-1}$, from which we will select $J_i$ and compute $\X_i$.
    Denote the Tucker error at reference rank
    \begin{equation*}
        \eps_i = \|\t T_i - \t T_i^{(\v r)}\|.
    \end{equation*}
    By the triangle inequality,
    \begin{equation*}
        \|\t T_{i+1} - \t T_i^{(\v r)}\|
        \leq
        \|\t T_{i} - \t T_i^{(\v r)}\| + \|\t T_{i+1} - \t T_i\|.
    \end{equation*}
    Note that $\|\t T_{i+1} - \t T_i\|$ is the reconstruction error of the matrix ID selecting $\J_i$ and $\X_i$, which by assumption is controlled via
    \begin{equation*}
        \|\t T_{i+1} - \t T_i\| \leq \Phi_i(\|\t T_{i} - \t T_i^{(\v r)}\|).
    \end{equation*}
    Since $\t T_i^{(\v r)}$ is of Tucker rank $\v r$, we have the recursive bound
    $\eps_{i+1} \leq \eps_i + \Phi_i(\eps_i)$.
    Hence $\eps_{d-1} = \|\t T_{d-1} - \t T_{d-1}^{(\v r)}\| \leq (1 + \Phi_{d-1}) \circ \cdots\circ(I + \Phi_1)(\|\t T - \t T^{(\v r)}\|)$.
    The final assertion follows by using $\Phi$ accuracy of the matrix ID algorithm again on the last mode.

    Next consider the case of being relatively $\Phi$-accurate. 
    For a nonzero tensor $\t Y$, we denote $\wh{\t Y} = \t Y / \|\t Y\|$ the normalized tensor.
    Note that tensors of bounded Tucker rank form a (symmetric) cone.
    Thus for any nonzero tensor $\t Y$,
    \begin{equation*}
        \frac{\|\t Y - \t Y^{(\v r)}\|}{\|\t Y\|} = \|\wh{\t Y} - (\wh{\t Y})^{(\v r)}\|,
    \end{equation*}
    and $\t Y - \t Y^{(\v r)} \perp \t Y^{(\v r)}$.
    For convenience, 
    denote $\A = \mat{\cdot, i} \t T_i$ the matrix from which we select columns in step $i$.
    Denote $\B = \mat{\cdot, i} \t T_{i+1} = \A_{:, J_i}\X_i$ the reconstructed matrix after matrix ID on $\A$.
    For every choice of $J_i$, since the set of all possible matrix ID reconstructions as $\X_i$ varies also forms a cone, we have $\A - \B \perp \B$.
    Now denote the angles under the Frobenius inner product $\theta = \measuredangle(\A, \B) = \measuredangle(\t T_i, \t T_{i+1})$ and $\phi = \measuredangle(\t T_i, \t T_i^{(\v r)})$ with $\theta, \phi \in [0, \pi/2)$.
    Then
    \begin{equation*}
        \measuredangle\left(\t T_{i+1}, \t T_{i+1}^{(\v r)}\right)
        \leq
        \measuredangle\left(\t T_{i+1}, \t T_{i}^{(\v r)}\right)
        \leq 
        \measuredangle\left(\t T_{i+1}, \t T_{i}\right)
            + \measuredangle(\t T_{i}, \t T_{i}^{(\v r)})
        \leq
        \theta + \phi.
    \end{equation*}
    Thus using orthogonality,
    \begin{equation*}
        \|\wh{\t T}_{i+1} - (\wh{\t T}_{i+1})^{(\v r)}\| \leq \sin(\theta + \phi).
    \end{equation*}
    Similarly, we also have
    \begin{equation*}
        \|\wh{\t T}_i - (\wh{\t T}_i)^{(\v r)}\| = \sin\phi,~~
        \left\|\wh{\t T}_i - \frac{\t T_{i+1}}{\|\t T_i\|}\right\| = \sin\theta.
    \end{equation*}
    Since $\sin(\theta + \phi) = \sin\theta\cos\phi + \sin\phi\cos\theta \leq \sin\theta + \sin\phi$,
    We have
    \begin{align*}
        \frac{\|\t T_{i+1} - \t T_{i+1}^{(\v r)}\|}{\|\t T_{i+1}\|} 
        &=
        \|\wh{\t T}_{i+1} - (\wh{\t T}_{i+1})^{(\v r)}\|
        \leq
        \|\wh{\t T}_i - (\wh{\t T}_i)^{(\v r)}\|
            + \left\|\wh{\t T}_i - \frac{\t T_{i+1}}{\|\t T_i\|}\right\|\\
        &=
        \frac{\|\t T_{i} - \t T_{i}^{(\v r)}\|}{\|\t T_{i}\|}
            + \frac{\|\t T_{i} - \t T_{i+1}\|}{\|\t T_{i}\|}.
    \end{align*}
    For all $i\leq d-1$, denote $\eps_i = \|\t T_i - \t T_{i}^{(\v r)}\|/\|\t T_i\|$.
    Then the above gives $\eps_{i+1} \leq \eps_i + \Phi(\eps_i)$.
    The rest of the argument is identical to the $\Phi$-accurate case.

    Finally, for the expectation case, the tensors $\t T_i$ now become random variables (as are the errors $\eps_i$).
    Taking expectations on both sides of the recurrence relations and using the concavity of $\Phi_i$, we have
    \begin{equation*}
        \E\eps_{i+1} \leq \E\eps_i + \E\Phi_i(\eps_i) \leq \E\eps_i + \Phi_i(\E\eps_i).
    \end{equation*}
    Then repeating our previous argument leads to the conclusion straightforwardly. 
\end{proof}

\subsection{Proof of \cref{thm:pert_rpchol}}\label{app:cid_rpqr}

\rpqrError*

First we give a lemma that proves the first step in \eqref{eq:rpchol_approach}.

\begin{lemma}\label{lem:step1}
    Suppose that $\S$ is a $\delta$-SE of $\A$. Let $\X^*$ be the sketched least squares solution, i.e., 
    \begin{equation*}
        \X^* = \argmin_{\X}\|\S\A_{:, J}\X - \S\A\|.
    \end{equation*}
    Then $\|\A_{:, J}\X^* - \A\|^2 \leq (1-\delta)^{-1}\min_{\X}\|\A_{:, J} \X - \A\|^2$.
\end{lemma}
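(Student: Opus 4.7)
\textbf{Proof plan for \cref{lem:step1}.} The plan is to get the improved constant $(1-\delta)^{-1}$ (rather than the straightforward $(1+\delta)/(1-\delta)$) by combining two Pythagorean decompositions, one in the original geometry and one in the sketched geometry, before applying the SE property.

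First, let $\X^{\mathrm{opt}} = \argmin_\X \|\A - \A_{:,J}\X\|$ and set $\bar\A = \A - \A_{:,J}\X^{\mathrm{opt}}$. By the normal equations for the (unsketched) least squares, $\bar\A \perp \col(\A_{:,J})$. Writing $\A - \A_{:,J}\X^* = \bar\A + \A_{:,J}(\X^{\mathrm{opt}} - \X^*)$ and noting that the second summand lies in $\col(\A_{:,J})$, Pythagoras gives
\begin{equation*}
    \|\A - \A_{:,J}\X^*\|^2 = \|\bar\A\|^2 + \|\A_{:,J}(\X^{\mathrm{opt}} - \X^*)\|^2.
\end{equation*}
Analogously, $\X^*$ is defined precisely so that $\S(\A - \A_{:,J}\X^*) \perp \col(\S\A_{:,J})$, and $\S\A_{:,J}(\X^{\mathrm{opt}} - \X^*)$ lies in this column space. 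Using the decomposition $\S\bar\A = \S(\A - \A_{:,J}\X^*) + \S\A_{:,J}(\X^{\mathrm{opt}} - \X^*)$ and Pythagoras again,
\begin{equation*}
    \|\S\bar\A\|^2 = \|\S(\A - \A_{:,J}\X^*)\|^2 + \|\S\A_{:,J}(\X^{\mathrm{opt}} - \X^*)\|^2.
\end{equation*}

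Next, note that $\bar\A$, $\A_{:,J}(\X^{\mathrm{opt}} - \X^*)$, and $\A - \A_{:,J}\X^*$ all lie in $\col(\A)$, so the $\delta$-SE property of $\S$ applies to each. Invoking the upper bound on $\|\S\bar\A\|^2$, the lower bound on $\|\S(\A - \A_{:,J}\X^*)\|^2$ and $\|\S\A_{:,J}(\X^{\mathrm{opt}} - \X^*)\|^2$, and substituting the first Pythagorean identity into the second will yield
\begin{equation*}
    (1-\delta)\bigl(\|\bar\A\|^2 + \|\A_{:,J}(\X^{\mathrm{opt}} - \X^*)\|^2\bigr) + (1-\delta)\|\A_{:,J}(\X^{\mathrm{opt}} - \X^*)\|^2 \leq (1+\delta)\|\bar\A\|^2.
\end{equation*}
Rearranging this gives $\|\A_{:,J}(\X^{\mathrm{opt}} - \X^*)\|^2 \leq \tfrac{\delta}{1-\delta}\|\bar\A\|^2$, and substituting back into the first Pythagorean identity completes the proof:
\begin{equation*}
    \|\A - \A_{:,J}\X^*\|^2 \leq \|\bar\A\|^2 + \tfrac{\delta}{1-\delta}\|\bar\A\|^2 = (1-\delta)^{-1}\|\bar\A\|^2.
\end{equation*}

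The main subtle point (and the reason the naive argument loses a factor) is that the straightforward route would bound $\|\S(\A-\A_{:,J}\X^*)\|^2 \leq \|\S\bar\A\|^2$ and then apply SE separately on both sides, which throws away the cross term $\|\S\A_{:,J}(\X^{\mathrm{opt}} - \X^*)\|^2$. Retaining this term via the two parallel Pythagorean decompositions is exactly what allows $\|\A_{:,J}(\X^{\mathrm{opt}} - \X^*)\|^2$ to appear on the left-hand side with a favorable sign, canceling half of the slack and improving the preconstant from $(1+\delta)/(1-\delta)$ to $(1-\delta)^{-1}$.
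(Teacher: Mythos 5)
Your proposal is correct and follows essentially the same route as the paper's proof: two Pythagorean decompositions (one from the optimality of the exact solution, one from the optimality of the sketched solution), combined with the SE bounds to show $\|\A_{:,J}(\X^{\mathrm{opt}}-\X^*)\|^2 \leq \tfrac{\delta}{1-\delta}\|\bar\A\|^2$. The only cosmetic difference is that you argue at the Frobenius-norm level while the paper reduces to a single normalized column first; the algebra is otherwise identical.
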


\begin{proof}
    Since the least squares problem is separable over the columns, it is sufficient to show that for each column $i$,
    \begin{equation*}
        \|\A_{:, J}\X^*_{:, i} - \A_{:, i}\|^2 \leq (1-\delta)^{-1}\min_{\x}\|\A_{:, J}\x - \A_{:, i}\|^2.
    \end{equation*}
    
    To make notation simple, fix $i$ and define $\a := \A_{:, i}$. Denote the exact least squares solution as $\u := \argmin_{\x}\|\A_{:, J}\x - \A_{:, i}\|$ and the projection of $\a$ as $\a_0 := \A_{:, J}\u$.
    Denote the sketched least squares solution as $\u_s := \X^*_{:, i}$ and in turn define $\a_s := \A_{:, J}\u_s$. We also define $\wh{\a} := \S \a$, $\wh{\a}_0 := \S \a_0$, and $\wh{\a}_s := \S \a_s$.
    Then in this simplified notation, what we want to show is precisely that 
    \begin{equation*}
        \|\a - \a_s\|^2 \leq (1-\delta)^{-1}\|\a - \a_0\|^2.
    \end{equation*}

    To this end, first note that by rescaling we can assume without loss of generality that $\|\a - \a_0\| = 1$.
    Moreover, define $\alpha := \|\wh{\a}_0 - \wh{\a}_s\|$ and $\beta := \|\a_0 - \a_s\|$.
    By the optimality of $\a_0$ (i.e., the fact that $\a_0$ is the least-squares projection of $\a$ to the column space of $\A_{:,J}$), we know that $\a - \a_0$ is orthogonal to the column space of $\A_{:,J}$, hence in particular to $\a_0 - \a_s$. Therefore it follows via the Pythagorean theorem that 
    \[
    \|\a - \a_s\|^2 = \|\a - \a_0\|^2 + \|\a_0 - \a_s \|^2 = 1 + \beta^2.
    \]
    Likewise, by the optimality of $\wh{\a}_s$ (i.e., the fact that $\wh{\a}_s$ is the least-squares projection of $\wh{\a}$ to the column space of $\S \A_{:,J}$), we know that $\wh{\a} - \wh{\a}_s$ is orthogonal to the column space of $\S \A_{:,J}$, hence in particular to $\wh{\a}_0 - \wh{\a}_s$. Therefore it follows via the Pythagorean theorem that 
    \[
    \| \wh{\a} - \wh{\a}_s\|^2  = \| \wh{\a} - \wh{\a}_0\|^2 - \| \wh{\a}_s - \wh{\a}_0\|^2 
    \leq 1+\delta - \alpha^2,
    \]
    where we have used the fact that $\S$ is a $\delta$-SE for $\A$ in the last line.

    Then from these facts we deduce:
    \begin{equation}
        \label{eq:lp1}
        1 + \beta^2 = \|\a - \a_s\|^2 \leq (1-\delta)^{-1}\|\wh{\a} - \wh{\a}_s\|^2 \leq (1-\delta)^{-1}(1+\delta - \alpha^2),
    \end{equation}
    where in the first inequality we have again used the SE property directly.
    Moreover, the SE property also directly implies that 
    \begin{equation}
        \label{eq:lp2}
        \alpha^2 \geq (1-\delta) \beta^2.
    \end{equation}
    By substituting \eqref{eq:lp2} into \eqref{eq:lp1} and simplifying, we deduce that $\gb^2 \leq \delta / (1-\delta)$.
    Hence
    \begin{equation*}
        \|\a - \a_s\|^2 = 1 + \beta^2 \leq (1-\delta)^{-1},
    \end{equation*}
    as we desired to show.
\end{proof}

The rest of the proof of \cref{thm:pert_rpchol} is inspired by the analysis in \cite{chen2023randomly}.
In the rest, we abbreviate symmetric positive semidefinite matrices  as PSD matrices.
For PSD matrices $\A$ and $\B$, we write $\A \preccurlyeq \B$ to indicate the Loewner partial order, meaning that $\B-\A$ is PSD.
The next lemma is handy.

\begin{lemma}\label{lem:phiD}
    For any $\beta\in [0, 1]$, define the map on PSD matrices\footnote{We use the convention $\Phi_\beta(0) = 0$.}
    \begin{equation*}
        \Phi_{\beta}(\M) = \m M - \beta\frac{\m M^2}{\tr(\m M)}.
    \end{equation*}
    Then $\Phi_{\beta}$ maps PSD matrices to PSD matrices, and for any $\theta \geq 0$, $\bar{\theta} \geq 0$, and any $\A\succcurlyeq 0$, $\B \succcurlyeq 0$,
    \begin{equation}\label{eq:phiB}
        \Phi_\beta(\theta\A + \bar{\theta}\B) \succcurlyeq \theta\Phi_\beta(\A) + \bar{\theta}\Phi_\beta(\B).
    \end{equation}
    In particular, $\Phi_\beta$ is concave and increasing with respect to the Loewner partial order.
\end{lemma}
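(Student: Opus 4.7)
My plan is to verify the three claims in order of increasing difficulty, with the superadditive/homogeneous inequality~\eqref{eq:phiB} as the central step.

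First, for PSD-preservation, I would diagonalize $\M = \U \bsgL \U^\top$ with eigenvalues $\gl_1, \ldots, \gl_n \geq 0$. Since $\M^2$ shares this eigenbasis, $\Phi_\beta(\M)$ is also diagonal in it, with eigenvalues $\gl_i\bigl(1 - \beta \gl_i / \tr(\M)\bigr)$. Because $\gl_i \leq \tr(\M)$ and $\beta \leq 1$, each factor is nonnegative, settling the claim (with $\Phi_\beta(0) := 0$ handling the edge case $\tr(\M) = 0$).

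The crux is \eqref{eq:phiB}. A short direct calculation shows that $\Phi_\beta$ is positively homogeneous of degree $1$, i.e.\ $\Phi_\beta(c\M) = c\,\Phi_\beta(\M)$ for $c \geq 0$, because the quadratic numerator scales as $c^2$ while the trace in the denominator absorbs one factor of $c$. Combined with ordinary concavity (the $\gth + \bar\gth = 1$ case), homogeneity immediately upgrades to \eqref{eq:phiB}: for general nonnegative $\gth, \bar\gth$ with $\gs = \gth + \bar\gth > 0$, apply concavity to $(\gth/\gs)\A + (\bar\gth/\gs)\B$ and scale by $\gs$. So I reduce to proving concavity. Writing $a = \tr(\A)$, $b = \tr(\B)$, the linear parts of $\Phi_\beta$ cancel, and after clearing denominators the desired inequality becomes
\begin{equation*}
    (\gth a + \bar\gth b)\left(\gth \frac{\A^2}{a} + \bar\gth \frac{\B^2}{b}\right) \succcurlyeq (\gth\A + \bar\gth\B)^2.
\end{equation*}
Expanding both sides, the $\gth^2\A^2$ and $\bar\gth^2\B^2$ terms cancel, leaving the claim
\begin{equation*}
    \frac{b}{a}\,\A^2 + \frac{a}{b}\,\B^2 \succcurlyeq \A\B + \B\A.
\end{equation*}
This is the main (and really only) obstacle, and it is resolved by completing the square: with $s = \sqrt{b/a}$, the matrix $s\A - s^{-1}\B$ is Hermitian, so
\begin{equation*}
    0 \preccurlyeq (s\A - s^{-1}\B)^2 = s^2\A^2 + s^{-2}\B^2 - (\A\B + \B\A).
\end{equation*}
The degenerate cases $a = 0$ or $b = 0$ force $\A = 0$ or $\B = 0$ and can be dispatched separately.

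The remaining conclusions follow immediately. Concavity in the Loewner order is the special case $\gth + \bar\gth = 1$ of \eqref{eq:phiB}. For monotonicity, given $\A \succcurlyeq \B \succcurlyeq 0$ write $\A = \B + \C$ with $\C \succcurlyeq 0$; applying \eqref{eq:phiB} with $\gth = \bar\gth = 1$ gives $\Phi_\beta(\A) \succcurlyeq \Phi_\beta(\B) + \Phi_\beta(\C) \succcurlyeq \Phi_\beta(\B)$, where the second inequality invokes the PSD-preservation shown in the first step.
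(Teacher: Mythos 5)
Your proposal is correct and rests on the same key identity as the paper: the difference of the two sides of \eqref{eq:phiB} reduces (up to a positive scalar) to the completed square $\bigl(\sqrt{\tr(\B)/\tr(\A)}\,\A - \sqrt{\tr(\A)/\tr(\B)}\,\B\bigr)^2 \succcurlyeq 0$, which is exactly the paper's $\C\C^\top$ term, and the monotonicity argument is identical. The only cosmetic differences are that you establish PSD-preservation by diagonalization rather than the paper's decomposition of $\M^2/\tr(\M)$ into a convex combination of rank-one deflations, and you route the general $\theta,\bar\theta\geq 0$ case through homogeneity plus concavity rather than computing it directly; both variations are fine.
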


\begin{proof}
    We rewrite $\Phi_{\beta}$ as
    \begin{equation*}
        \Phi_{\beta}(\M) 
        = 
        \M - \beta\sum_{j}\frac{\M_{jj}}{\sum_{j'} \M_{j'j'}} \M_{jj}^{-1} \M_{:, j}\M_{:, j}^\top
        \succcurlyeq
        \sum_{j}\frac{\M_{jj}}{\sum_{j'} \M_{j'j'}} \cdot (\M - \M_{jj}^{-1} \M_{:, j}\M_{:, j}^\top).
    \end{equation*}
    To prove $\Phi_{\beta}(\M)$ is also PSD, it suffices to show $\M - \M_{jj}^{-1}\M_{:, j}\M_{:, j}^\top$ is PSD.
    To this end, write $\M = \A^\top\A$. Then 
    \begin{equation*}
        \M - \M_{jj}^{-1} \M_{:, j}\M_{:, j}^{\top}  
        = 
        \A^{\top} (\m I - \hat{\v v}\hat{\v v}^{\top})\A \succcurlyeq 0,
    \end{equation*}
    where $\hat{\v v}$ is the unit vector in the direction of $\A_{:, j}$.
    Thus, $\Phi_{\beta}(\M)$ is also PSD.

    Now we turn to the proof of \eqref{eq:phiB}. 
    A direct computation shows that 
    \begin{align*}
        \Phi_{\beta}(\theta \A + \bar{\theta} \B) - \theta\Phi_{\beta}(\A) - \bar{\theta}\Phi_{\beta}(\B)
        &=
        \beta \big(\Phi_{1}(\theta \A + \bar{\theta} \B) - \theta\Phi_{1}(\A) - \bar{\theta}\Phi_{1}(\B)\big)\\
        &=
        \beta \cdot \frac{\theta\bar{\theta}}{\tr(\theta\A+ \bar{\theta}\B)}
        \m C \m C^\top \succcurlyeq 0,
    \end{align*}
    where 
    \begin{equation*}
        \m C = \sqrt{\frac{\tr(\A)}{\tr(\B)}}\B 
            -
        \sqrt{\frac{\tr(\B)}{\tr(\A)}}\A. 
    \end{equation*}
    Taking $\theta \in [0, 1]$ and $\bar{\theta} = 1 - \theta$ shows $\Phi_\beta$ is concave.
    For any $\X \preccurlyeq \Y$, taking $\theta = \bar{\theta} = 1$ we have $\Phi_{\beta}(\Y) \preccurlyeq \Phi_{\beta}(\Y - \X) + \Phi_{\beta}(\X)$, and thus $\Phi_{\beta}(\X) \preccurlyeq \Phi_\beta(\Y)$.
\end{proof}

Now we prove the key result \cref{lem:approx_rpchol}, from which the result of \cref{thm:pert_rpchol} is almost immediate.

\approxRpqr*

\begin{proof}
    Following the analysis in \cite{chen2023randomly}, we will work with the PSD matrix $\K = \A^\top\A$ instead.
    Then the desired result \eqref{eq:pert_rpchol11} is equivalent to 
    \begin{equation*}
        \E \left[ \tr(\K - \K^{[k]}) \right] \leq (1 + \eps) \, \tr(\K - \K^{(r)}),
    \end{equation*}
    where $\K^{[k]}$ is the Nystr\"{o}m approximation to $\K$ of rank $k$ using columns selected by approximate diagonal sampling, which corresponds to norm sampling for the interpolative decomposition.

    Let $\m E^{[j]} = \K - \K^{[j]}$ be the residual matrix after selecting $j$ columns under $\wt{\pr}$.
    Then the expected residual after adding a column to the selection is
    \begin{align}
        \E(\m E^{[j+1]} \,|\, \m E^{[j]}) 
        &= 
        \m E^{[j]} -
        \sum_{i = 1}^n \wt{\pr}(i)(\m E^{[j]}_{ii})^{-1} \m E^{[j]}_{:, i}(\m E^{[j]}_{:, i})^\top \nonumber\\
        &\preccurlyeq
        \m E^{[j]} -
        \sum_{i = 1}^n
            \beta\pr(i)
            (\m E^{[j]}_{ii})^{-1} \m E^{[j]}_{:, i}(\m E^{[j]}_{:, i})^\top \nonumber\\
        &=
        \m E^{[j]} - \beta\cdot\frac{(\m E^{[j]})^2}{\tr \, (\m E^{[j]})}. \nonumber\\
        &=
        \Phi_{\beta}(\m E^{[j]}).\nonumber
    \end{align}
    Here $\Phi_{\beta}$ is as defined in \cref{lem:phiD}, and in the third step we used $\pr(i) = \m E^{[j]}_{i,i} \, \big/ \tr \,  ( \m E^{[j]} )$ by the definition of diagonal sampling.

    Now since $\Phi_{\beta}$ is concave and $\m E^{[j]}$ is PSD, we can bound
    \begin{equation*}
        \E \left[ \m E^{[j+1]} \right]
        =
        \E \left[ \E(\m E^{[j+1]} | \m E^{[j]}) \right]
        \preccurlyeq
        \E \left[ \Phi_{\beta}(\m E^{[j]}) \right]
        \preccurlyeq
        \Phi_{\beta} \left( \E [ \m E^{[j]} ] \right).
    \end{equation*}
    Hence by induction it follows that 
    \begin{equation*}
        \E  \left[ \tr \, ( \m E^{[k]} ) \right] = \tr \left( \E [ \m E^{[k]} ] \right) \leq \tr \left( \Phi_{\beta}^{(k)}(\K) \right) ,
    \end{equation*}
    where $\Phi_{\beta}^{(k)}$ is the $k$-fold composition of $\Phi_{\beta}$ with itself.
    It remains to bound the right-hand side.

    The rest of the proof is almost identical to \cite{chen2023randomly}, and more detailed explanations can be found there.
    First, we observe that the map $\tr \circ \, \Phi_{\beta}^{(k)}$ is invariant under conjugation, so we may restrict our attention to positive diagonal matrices $\m \gL$ in place of more general PSD matrices $\K$ and try to find the worst case scenario maximizing $\tr  \, ( \m \Phi^{(k)} ( \boldsymbol{\Lambda} ) )$ under the constraints $\tr \, ( \m \gL^{(r)} ) = a$ and $\tr  \, ( \m \gL - \m \gL^{(r)} )  = b$ for some given $a$ and $b$.

    Indeed, we claim that the map $\m \gL \mapsto \tr \, ( \Phi_{\beta}^{(k)}(\m \gL) )$ attains its maximum subject to these constraints at 
    \[ \m \gL^* = \diag{a/r,\ldots, a/r, b/q,\ldots,b/q},\]
    where $q:=n-r$.
    To see this, observe that $\Phi_\beta$ is both increasing and concave with respect to the Loewner order (cf. \cref{lem:phiD}). Therefore composition with $\Phi_\beta$ preserves concavity, and in turn $\Phi_\beta^{(k)}$ is concave. Consequently $\tr\circ\,\Phi_\beta^{(k)}$ is a concave function on diagonal matrices, and since it is moreover invariant under permutation of the diagonal entries, it follows that $\tr\circ\,\Phi_\beta^{(k)}$ is Schur concave, and $\m\gL^*$ is indeed the maximizer.

    Therefore 
    \begin{equation*}
        \tr \left( \Phi_{\beta}^{(k)}(\m \gL) \right) \leq 
        \tr \left( \Phi_{\beta}^{(k)}(\m \gL^*) \right).
    \end{equation*}
    It is straightforward to verify that $\Phi_{\beta}^{(j)}(\m \gL^*)$ takes the form
    \[
    \Phi_{\beta}^{(j)}(\m \gL^*) = \diag{a_j/r,\ldots,a_j/r, b_j/q,\ldots, b_j/q},
    \]
    where the values $a_j$ and $b_j$ are determined via the recursion relations 
    \begin{gather*}
        a_{j+1} - a_j = -\beta\cdot \frac{a_j^2}{r(a_j + b_j)},\\
        b_{j+1} - b_j = -\beta\cdot \frac{b_j^2}{q(a_j + b_j)},
    \end{gather*}
    with initial condition $a_0 = a$ and $b_0 = b$. 

    Under the dynamics given above, we are left to upper bound $\tr \left( \Phi_{\beta}^{(k)}(\m \gL^*) \right) = a_k + b_k$.
    In \cite{chen2023randomly}, it is proved that it can be bounded by $\hat{a}_k + b$ where $\hat{a}_k$ is given by the following dynamics
    \begin{equation}\label{eq:hata}
        \hat{a}_{j+1} - \hat{a}_j = -\beta\cdot\frac{\hat{a}_j^2}{r(\hat{a}_j + b)},~~\hat{a}_0 = a.
    \end{equation}
    To derive an analytic bound on $\hat{a}_k$, in \cite{chen2023randomly} the dynamics \eqref{eq:hata} are shown to be upper bounded by a continuous-time dynamics $x(t)$:
    \begin{equation*}
        a_j \leq x(j),~~\text{where}~~
        \frac{dx(t)}{dt} = -\beta\cdot\frac{x^2}{r(x + b)},~ x(0) = a.
    \end{equation*}
    Now given $\eps > 0$, in order for it to hold that 
    \begin{equation*}
        \tr \left( \Phi_{\beta}^{(k)}(\K) \right) \leq (1 + \eps) \tr\, \left( \K - \K^{(r)} \right) = (1 + \eps) b,
    \end{equation*}
    it suffices to have $x(k) \leq \eps b$. Since $x(t)$ is decreasing in $t$, it is sufficient to take
    \begin{equation*}
        k = \int_{a}^{\eps b} -\beta^{-1}\cdot\frac{r(x + b)}{x^2} dx
        \leq
        \beta^{-1}\left(
            \frac{r}{\eps} + r\log\left(\frac{1}{\eps\eta}\right)
        \right).
    \end{equation*}
    Therefore, the claimed result is proved. 
\end{proof}

Now we can proceed with the proof of \cref{thm:pert_rpchol} combining \cref{lem:step1} and \cref{lem:approx_rpchol}.
\begin{proof}[Proof of \cref{thm:pert_rpchol}]
    Given \cref{lem:step1}, we are left to prove \eqref{eq:pert_rpchol1}.
    Recall the formula \eqref{eq:qr_score} for the exact scores for sampling column $i$ given a subset $I$ of previously selected columns:
    \begin{equation*}
        d^{(I)}_i = \min_{\x}\|\A_{:, I} \x - \A_{:, i}\|^2.
    \end{equation*}
    When $\S$ is a $\delta$-SE of $\A$, the sketched scores are
    \begin{equation*}
        \wh{d}_i^{(I)} = \min_{\x}\|\S\A_{:, I} \x - \S\A_{:, i}\|^2 \asymp (1\pm\delta) \, d^{(I)}_i.
    \end{equation*}
    Since this holds for all $I$ and $i$, it follows that the sampling probabilities $\wt{\pr}_I(i)$ and $\pr_I(i)$ (with and without sketching, respectively) satisfy
    \begin{equation*}
        \wt{\pr}_I(i) = \frac{\wh{d}_i^{(i)}}{\sum_j \wh{d}_i^{(j)}}
        \geq \frac{1-\delta}{1+\delta}\frac{{d}_i^{(i)}}{\sum_j {d}_i^{(j)}} = \frac{1-\delta}{1+\delta} \pr_I(i).
    \end{equation*}
    The conclusion follows from \cref{lem:approx_rpchol} using $\beta = (1-\delta)/(1+\delta)$.
\end{proof}

\subsection{Proof of \cref{thm:pert_nuc}}\label{app:cid_nuc}

\nucError*

\begin{proof}
    First, since each column of $\A_{:, J}\X^* - \A$ is in $\col(\A)$, by the SE property of $\S$, we have
    \begin{equation*}
        \|\S\A_{:, J}\X^* - \S\A\|^2 \asymp (1\pm\delta)\|\A_{:, J}\X^* - \A\|^2.
    \end{equation*}
    Since $\X^*$ minimizes $\|\S\A_{:, J}\X - \S\A\|^2$ over all $\X$, the first step of \eqref{eq:nuc_approach} follows, i.e., 
    \begin{equation*}
        \|\A_{:, J}\X^* - \A\|^2 \leq \frac{1}{1-\delta}\min_{\X} \|\S\A_{:, J}\X - \S\A\|^2.
    \end{equation*}
    
    Next, using the error bound from \cite{fornace2024column}, in the $\eps \rightarrow 0$ limit we have that 
    \begin{equation*}
        \min_{\X}\|\S\A_{:, J}\X - \S\A\|^2 \leq (1 + \eps)\|\S\A - (\S\A)^{(r)}\|^2
    \end{equation*}
    provided that 
    \begin{equation*}
        k \geq C \left(\frac{r}{\varepsilon }+r-1\right) \left(\log \left(\hat{\nu}^{-1}\right)+\log \left(\varepsilon^{-1}-r^{-1}+1\right)\right),
    \end{equation*}
    where $C$ is a universal constant and $\hat{\nu} = \|\S\A - (\S\A)^{(r)}\|^2 / \|(\S\A)^{(r)}\|^2$.
    
    Finally, since $\S$ is a $\delta$-SE of $\A$, it follows from the min-max principle for singular values\footnote{
    For a subspace $U$, denote $B(U)$ the unit ball in $U$. Using the min-max principle,
        \begin{equation*}
            \sigma_i(\A)^2 = \min_{\dim(U) = n-i+1} \, \max_{\x \in B(U)}\|\A\x\|^2 \, 
            \leq 
            \frac{1}{1-\delta} \ 
            \min_{\dim(U) = n-i+1} \, \max_{\x \in B(U)}\|\S\A\x\|^2 \, 
            =
            \frac{1}{1-\delta} \ \sigma_i(\S\A)^2.
        \end{equation*}
        A similar argument yields the other direction.
    }
    that $\sigma_i(\S\A)^2 \asymp (1\pm\delta)\sigma_i(\A)^2$ for all $i \leq\rank(\A)$.
    Hence,
    \begin{equation*}
        \hat{\nu}^{-1} \leq \frac{1+\delta}{1-\delta} \ \nu^{-1}
    \end{equation*}
    and 
    \begin{equation*}
        \|\S\A - (\S\A)^{(r)}\|^2 \leq (1 + \delta) \|\A - \A^{(r)}\|^2.
    \end{equation*}
    It follows that once
    \begin{equation*}
        k \geq C \left(\frac{r}{\varepsilon }+r-1\right) \left(\log \left(\frac{1+\delta}{1-\delta} \ \nu^{-1}\right)+\log \left(\varepsilon^{-1}-r^{-1}+1\right)\right),
    \end{equation*}
    we have 
    \begin{align*}
        \|\A_{:, J}\X^* - \A\|^2 
        &\leq 
        \frac{1}{1-\delta}\min_{\X}\|\S\A_{:, J}\X - \S\A\|^2\\
        &\leq
        \frac{1}{1-\delta}(1 + \eps)\|\S\A - (\S\A)^{(r)}\|^2\\
        &\leq 
        \frac{1+\delta}{1-\delta}(1+\eps)\|\A - \A^{(r)}\|^2.
    \end{align*}
    The proof is now completed.
\end{proof}

\section{Additional proofs and details for satellite interpolative decomposition} \label{app:sid}

\subsection{Proof of \cref{prop:sid_err}}\label{app:sid_errbnd}

\satIdError*

\begin{proof}
    First we consider the case of deterministic error.
    Notice that the optimal core (in Frobenius error) is given by
    \begin{equation*}
        \t C^* = \tucker(\t T, \m T_1^\dagger,\ldots,\m T_d^\dagger).
    \end{equation*}
    Thus, letting $\m P_j$ denote the orthogonal projection operator to $\col(\m T_j)$, the reconstructed tensor can alternatively be written as 
    \begin{equation*}
        \wh{\t T} = \tucker(\t T, \m P_1,\ldots,\m P_d).
    \end{equation*}
    Moreover, on every mode, the error is given by
    \begin{eqnarray*}
        \eps_j & = &   \|\t T - \tucker(\t T, \m I,\ldots,\m I, \m P_j, \m I,\ldots,\m I)\| \\
        & = &  
        \Vert \tucker(\t T, \m I,\ldots,\m I, \, \m I - \m P_j, \, \m I,\ldots,\m I)\| 
    \end{eqnarray*}
    Define $\t T_j = \tucker(\t T, \m P_1,\ldots,\m P_j, \m I,\ldots,\m I)$ and, for convenience, $\t T_0 = \t T$. 
    Then we can bound the error via telescoping sum:
    \begin{equation}
    \label{eq:telescope}
        \|\t T - \wh{\t T}\|
        \leq
        \sum_{j = 1}^{d} \|\t T_{j-1} - \t T_{j}\|.
    \end{equation}
    Now observe 
    \begin{equation*}
        \t T_{j-1} - \t T_{j} = 
        \tucker(\t T, \m P_1 ,\ldots,\m P_{j-1}, \, \m I - \m P_j, \, \m I,\ldots,\m I).
    \end{equation*}
    By considering a sequence of suitable matricizations and noting that multiplication by an orthogonal projector $\m P_i$ cannot enlarge the Frobenius norm, it follows that 
    \begin{equation*}
    \Vert  \t T_{j-1} - \t T_{j} \Vert \leq 
    \Vert \tucker(\t T, \m I ,\ldots,\m \I, \, \m I - \m P_j, \, \m I,\ldots,\m I) \Vert = \eps_j,
    \end{equation*}
    which together with \eqref{eq:telescope} completes the proof.
    The proof for the expected error follows from the same argument. 
\end{proof}

\subsection{Proof of \cref{prop:sparse_sid_complexity}} \label{app:sid_nnzalg}

Recall the assumptions we made in \cref{sec:nnzalg} to simplify the complexity analysis:

\begin{itemize}[itemsep = -1ex, topsep = -1ex]
    \item[(A1)] the tensor has a shape $n^d$ and the SatID rank is $(k,\ldots,k)$;
    \item[(A2)] $k(N_1 + \ldots + N_d) = \cO(N_0)$.
\end{itemize}

\nnzAlg*

\begin{proof}
    Following the algorithm, to choose a column $\v i= (i_1,\ldots,i_{d-1})$, we will use a sketch for selecting $i_1,\ldots$ for the first few modes, and then switch to direct method.
    Let the sketch matrices for selecting $i_b$ be $\S^{(b)}$, $b = 1,\ldots$. 
    We maintain the matrix $\Q_I\in\R^{n\times |I|}$ representing a basis of the selected columns.
    The complexity for updating this $\Q_I$ throughout the selection (Lines 19-20) is $\cO(nk)$.
    We denote by $C_s$ the total complexity needed to sample $k$ tuples of $(i_s,\ldots,i_{d-1})$ given $k$ tuples of $(i_1,\ldots,i_{s-1})$.

    First we apply $\S^{(1)}$ to modes $2,\ldots,d-1$ to $\t T$ to get 
    $\wh{\t T}^{(1)} := \S^{(1)} \times_{2,\ldots,d-1}\t T$.
    Then we select $i_1$ using the direct method from \cref{sec:direct} on $\mat{\cdot, 1}\wh{\t T}^{(1)} \in \R^{mn\times n}$ for all the $k$ columns.
    Since there are only $mn^2$ elements in $\wh{\t T}^{(1)}$, the complexity for selecting all $i_1$ for $k$ columns using the direct method (Lines 6, 19--21) is $\cO(k\nnz(\wh{\t T}^{(1)})) = \cO(kmn^2)$.
    Thus, the total complexity for finding all $i_1$, including the application of $\S^{(1)}$ (Line 5), is $\cO(N_0 + kmn^2)$.

    Next, consider $s > 1$ and we are selecting $i_s$ when $i_1,\ldots,i_{s-1}$ have been selected and we have not yet switched to the direct method (Lines 12--17). 
    Given selected column indices $I$, if we switch to the direct method, we will sample $i_s,\ldots,i_{d-1}$ all at once by sampling a single column from 
    \begin{equation*}
        \mat{d, \cdot} (\t T_{i_1,\ldots,i_{s-1},\ldots}\times_d \m Q_I^\perp).
    \end{equation*} 
    Let $\A_s =  \mat{d, \cdot}(\t T_{i_1,\ldots,i_{s-1},\ldots})$ the matrix we sample columns from.
    The sampling probability for column $\v i^{(s)} = (i_s,\ldots,i_{d-1})$ is given by
    \begin{equation*}
        \pr(\v i^{(s)} \,|\, i_1,\ldots,i_{s-1})
        ~\propto~
        \|(\A_s - \Q_I \Q_I^\top \A_s)_{:, \v i^{(s)}}\|^2_2.
        =
        \|(\A_s)_{:, \v i^{(s)}}\|^2 - \|(\Q_I^\top \A_s)_{:, \v i^{(s)}}\|^2_2
    \end{equation*}
    As there are at most $N_s$ nonzeros in $\A_s$, the scores can be computed in $\cO(|I| N_s) = \cO(kN_s)$ time.
    As we need to select $k$ columns in total, if we switch to the direct method at mode $s$ (Line 12 gives True), the total cost of selecting $k$ tuples of $(i_s,\ldots,i_{d-1})$ is then $\cO(k^2 N_s)$\footnote{Note that we cannot apply the dynamic updating trick used in the direct method to reduce this to $\cO(k N_s)$, because for each $(i_1,\ldots,i_{s-1})$, we will have different $\A_s$ and the scores need to be recomputed.}.

    On the other hand if we use marginalization and sketching at mode $s$ (Line 12 gives False), the sampling scores for $i_s$ is given by
    \begin{align*}
        \pr(i_s \,|\, i_1,\ldots,i_{s-1})
        &~\propto~
        \|(\mat{\cdot, s}\t T_{i_1,\ldots,i_{s-1},\ldots})_{:, i_s}\|_2^2 - \|(\mat{\cdot, s}\t (\t T_{i_1,\ldots,i_{s-1},\ldots}\times_d \m Q_I))_{:, i_s}\|^2_2\\
        &\approx
        \|(\mat{\cdot, s}\wh{\t T}^{(s)})_{:, i_s}\|_2^2 - \|(\mat{\cdot, s} (\wh{\t T}^{(s)}\times_d \m Q_I))_{:, i_s}\|^2_2,
    \end{align*}
    where $\wh{\t T}^{(s)} = \S^{(s)}\times_{s+1,\ldots,d-1}\t T_{i_1,\ldots,i_{s-1},\ldots}$.
    Thus, computing $\wh{\t T}^{(s)}$ requires $\cO(N_s)$ complexity.
    Since $\wh{\t T}^{(s)}$ has $mn^2$ elements, 
    these probabilities can be computed in $\cO(|I|mn^2) = \cO(kmn^2)$ time.
    So the complexity of selecting $k$ different $i_s$ in all columns (Lines 14 -- 15) is 
    $k \cdot \cO(N_s + kmn^2)$.

    Let $C_s^*$ denote the optimal complexity for sampling $(i_s,\ldots,i_{d-1})$ for all $k$ columns following the best policy of when to switch to direct method.
    Then combining the two options, switching to the direct method at $s$ or using sketch and marginalization at $s$, we have
    \begin{equation*}
        C_s^* = \min(k^2 \cO(N_s),~ C^*_{s+1} + k\cdot \cO(N_s + kmn^2)).
    \end{equation*}
    From here we can find out the optimal policy for when to switch to the direct method -- for $s = 2,\ldots,d-1$, we switch to the direct method when the first term is smaller.
    The total complexity under this policy is 
    \begin{equation*}
        C_0^* = \cO(N_0 + kmn^2) + C_1^* = \cO(N_0) + k\cdot \cO(N_1 + \ldots + N_d) + \cO(mk^2n^2) = \cO(N_0 + mk^2n^2),
    \end{equation*}
    where assumption (A2) has been used.
    This is the claimed complexity bound we desire.
\end{proof}

\end{document}